\newcommand{\midarrow}{\tikz \draw[-triangle 90] (0,0) -- +(.1,0);}
\newcommand{\Z}{\mathbb{Z}}
\newcommand{\F}{\mathbb{F}}
\newcommand{\R}{\mathbb{R}}
\newcommand{\E}{\mathbb{E}}
\newcommand{\ELL}[1]{L(#1)}
\newcommand{\ELLprime}[1]{L'(#1)}
\newcommand{\slope}{{\sf m}}
\DeclareMathOperator{\limset}{limset}
\theoremstyle{plain}
\newtheorem{prop}{Proposition}[section]
\newtheorem{lemma}[prop]{Lemma}
\newtheorem{corollary}[prop]{Corollary}
\newtheorem*{claim}{Claim}
\newtheorem{Thm}{Theorem}
\newtheorem*{ThmA}{Theorem A}
\newtheorem*{co}{Corollary}
\theoremstyle{definition}
\newtheorem{rem}[prop]{Remark}
\newtheorem*{rk}{Remark}
\begin{document}

\title
	[All CAT(0) Boundaries of Croke-Kleiner-Admissible Groups are Equivariantly CE Equivalent]
	{All CAT(0) Boundaries of Croke-Kleiner-Admissible Groups are Equivariantly CE Equivalent}

\begin{abstract}
Question~2.6 of Bestvina's \emph{Questions in Geometric
Group Theory} asks whether every pair of boundaries of a given CAT(0) group
$G$ is cell-like equivalent \cite{BestvinaQuestions}.
The question was posed by Bestvina shortly after
the discovery, by Croke and Kleiner \cite{Croke2000}, of a CAT(0) group
$\Gamma$ that
admits multiple boundaries. Previously, it had been observed by Bestvina and
Geoghegan that all boundaries of a torsion free CAT(0) $G$ would necessarily
have the same shape. Since \textquotedblleft cell-like
equivalence\textquotedblright\ is weaker than topological equivalence, but in
most circumstances, stronger (and more intuitive) than shape equivalence, this
question is a natural one when working with the pathological types of spaces
that occur as group boundaries. Furthermore, the definition of cell-like
equivalence allows for a obvious $G$-equivariant extension.
In private
conversations, Bestvina has indicated a preference for the $G$-equivariant
formulation of Q2.6.

In this paper we provide a positive answer to Bestvina's $G$-equivariant
Cell-like Equivalence Question for the class of \emph{admissible }groups
studied by Croke and Kleiner in \cite{Croke2000}.
Since that collection includes the
original Croke-Kleiner group $\Gamma$, our result provides a strong solution
to Q2.6, for the group that originally motivated the question.
%
%
\end{abstract}

\author{Craig Guilbault$^\dagger$\and Christopher Mooney$^\ast$}
\date{\today}
\subjclass[2000]{57M07, 20F65, 54C56}
\keywords{CAT(0) boundary, group boundary, shape equivalence, cell-like equivalence}
\thanks{The first author was supported in part by a Simons Foundation Collaboration Grant.
$^\ast$The second author was supported in part by NSF grant EMSW21-RTG: Training the Research Workforce in Geometry, Topology and Dynamics
and a Caterpillar Fellowship.}
\maketitle

In \cite{Guilbault}, we described a general strategy for attacking the following:\medskip

\noindent\textbf{Bestvina's Equivariant Cell-like Equivalence Question.}
\emph{For a CAT(0) group }$G$\emph{, are all boundaries }$G$%
\emph{-equivariantly cell-like equivalent?}\medskip

\noindent This question is a strong version of Q2.6 from Bestvina's
\emph{Questions in Geometric Group Theory } \cite{BestvinaQuestions}, where $G$-equivariance is not
required. Both versions of the question were motivated by a desire to
understand the now-famous example, due to Croke and Kleiner \cite{Croke2000},
of a CAT(0)
group $\Gamma$ that admits boundaries that are not topologically equivalent.
Prior to the emergence of that example, it had been observed by both Bestvina
and Geoghegan that all boundaries of a given CAT(0) group $G$ would
necessarily be shape equivalent. The point then is that, in the proper
context, cell-like equivalence is a relationship more flexible than
topological equivalence, but stronger than mere shape equivalence. In
addition, by the nature of its definition, a cell-like equivalence allows for
a more direct comparison of the spaces involved. One of those ways is that an
equivariance requirement can easily be added---a requirement that is obviously
desireable in the context of group boundaries. (Definitions of all terms used
in this paragraph will be discussed shortly.)

In \cite{Croke2002}, Croke and Kleiner built upon their work in \cite{Croke2000} by analyzing a
collection of CAT(0) groups which they called \emph{admissible}, and which we
term \emph{Croke-Kleiner admissible }(or \emph{CKA groups}). The main result
of this paper is the following.

\begin{ThmA}
If $G$ is a Croke-Kleiner admissible group, then all CAT(0) boundaries of $G$
are $G$-equivariantly cell-like equivalent.
\end{ThmA}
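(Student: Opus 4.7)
The plan is to exploit the graph-of-groups decomposition built into a CKA group $G$ and to pass it to the boundaries of any two geometric CAT(0) actions. Fix such actions of $G$ on CAT(0) spaces $X_{1}$ and $X_{2}$. Each $X_{i}$ admits a $G$-invariant decomposition into vertex subspaces (isometric to products $Y_{v}\times\R$, with $Y_{v}$ a hyperbolic 2-complex on which the vertex group acts) and edge subspaces (flat strips stabilized by virtually $\Z^{2}$ edge groups). This decomposition extends to the visual boundary $\partial X_{i}$, producing a $G$-invariant partition of $\partial X_{i}$ into \emph{vertex pieces} $\partial(Y_{v}\times\R)$ (each topologically the suspension of $\partial Y_{v}$) and into circles arising as boundaries of edge flats along which vertex pieces are identified. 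The goal is to compare these two decompositions.

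The first step is to observe that each vertex piece is topologically canonical: $\partial Y_{v}$ is a topological invariant of the vertex group (a surface group, CAT($-1$), etc.), so the homeomorphism type of each vertex piece is independent of the choice of $X_{i}$. I would make this identification equivariant by choosing, for each vertex $v$ of the Bass--Serre tree $T$, a $\Stab(v)$-equivariant homeomorphism $h_{v}\colon \partial(Y_{v}\times\R)_{1}\to\partial(Y_{v}\times\R)_{2}$, and then spreading this choice over the $G$-orbit of $v$ in $T$ by translation. Because the edge flats in $X_{i}$ have boundary circles lying inside the two adjacent vertex pieces, the mismatch between the two decompositions is concentrated entirely on these circles.

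With vertex pieces matched, I would construct the cell-like equivalence as a pushout / double quotient. Form the \emph{common refinement} $Z$ obtained from the disjoint union $\bigsqcup_{v}\partial(Y_{v}\times\R)$ (one copy for each vertex of $T$, identified via the $h_{v}$) by imposing, for each edge $e$ of $T$, \emph{both} the identifications coming from $X_{1}$ \emph{and} those coming from $X_{2}$ along the corresponding pair of limit circles. This gives an upper-semi-continuous decomposition of the natural boundary model, with quotient maps $\pi_{i}\colon Z\to\partial X_{i}$. The main task is to verify that the point preimages under each $\pi_{i}$ are cell-like: a generic point preimage is a single point, and the non-trivial preimages are unions of arcs inside the edge circles of the \emph{other} CAT(0) structure, which can be arranged (after rechoosing the $h_{v}$ using the standard trick of straightening along edge flats via the Flat Torus Theorem) to be trees or contractible 1-complexes.

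The principal obstacle, and where Guilbault's general strategy from \cite{Guilbault} has to be worked hard, is the analysis of these edge circles: different CAT(0) metrics may parametrize a boundary circle of an edge flat differently inside the two vertex pieces it connects, and one must show that the two parametrizations differ by a map whose ``decomposition graph'' inside $Z$ is cell-like in the strong Čech sense (not merely acyclic). This boils down to a careful study of how a $\Z^{2}$-flat's boundary circle sits inside the suspension topology of an adjacent vertex piece, using the fact that the edge-flat direction is detected as the suspension points and the transverse $\Z$-direction as a specific Cantor point. Once these local models are in hand, equivariance is automatic from the construction, and $Z$ together with $\pi_{1},\pi_{2}$ witnesses the required $G$-equivariant cell-like equivalence between $\partial X_{1}$ and $\partial X_{2}$.
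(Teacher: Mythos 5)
Your opening reduction (matching vertex pieces via $\Stab(v)$-equivariant homeomorphisms, which do exist by the strong rigidity of the hyperbolic quotients $H_v=G_v/Z(G_v)$) matches a step the paper also uses, but your construction of the intermediate space $Z$ has two gaps that are fatal as stated. First, $Z$ is built only from the vertex pieces, i.e.\ from the rational part $RX=\bigcup_{v}\partial X_v$ of the boundary. A CKA boundary also contains irrational points --- limit points of geodesic rays that cross infinitely many vertex spaces --- and $RX$ is dense but not closed: the way the infinitely many vertex pieces accumulate onto irrational points is exactly the data that depends on the choice of CAT(0) metric and is the source of the Croke--Kleiner non-rigidity phenomenon. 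Since the disjoint union over the (infinite) Bass--Serre tree is not compact, your $Z$ is not a compactum, no recipe is given for points of $Z$ lying over irrational points, and cell-like equivalence is a relation between compacta; compactifying $Z$ correctly is not a technicality but essentially the whole problem. Second, the arrows go the wrong way and the decomposition elements need not be cell-like: imposing \emph{both} families of edge identifications makes $Z$ a quotient of (the rational part of) each $\partial X_i$, so the natural maps run $RX_i\to Z$, not $\pi_i\colon Z\to\partial X_i$; moreover the identifications generated are orbits of the circle homeomorphisms $\alpha_2^{-1}\alpha_1$ (the mismatch between the two gluings along an edge circle, propagated around the tree), which can be infinite or dense in a circle, so the quotient can fail to be Hausdorff and the nondegenerate elements can fail to be cell-like. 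Your hope of rechoosing the $h_v$ to make all mismatches tree-like runs directly against the Croke--Kleiner phenomenon: if the vertex identifications could be chosen compatibly with all edge identifications, one would be most of the way to an equivariant homeomorphism of the boundaries, which is exactly what fails for these groups.

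The paper avoids building $Z$ by hand. It takes for $Z$ the Furstenburg limit set $\overline{\Lambda}\subset\partial X_1\times\partial X_2$ of the diagonal orbit $G^{\Delta}(x_1,x_2)\subset X_1\times X_2$; the two coordinate projections $\overline{\Lambda}\to\partial X_i$ are then automatically defined on a compactum, continuous, surjective, and $G$-equivariant --- in particular they are defined over irrational points with no extra work. All of the difficulty is relocated to showing that the fibers are contractible, which the paper does by a case analysis (rational non-pole, pole, irrational point) using $\delta$-hyperbolicity of $H_v$, slope functions on vertex groups, and Croke--Kleiner's structure theory for the irrational set, together with a general connectedness theorem for schmear fibers. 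If you wish to salvage your approach, you would at minimum need to (i) specify a compactification of $Z$ incorporating the irrational points and (ii) prove that the composite circle mismatches have cell-like orbit closures; both steps appear to be as hard as the original theorem.
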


Since the the original Croke-Kleiner group $\Gamma$ is a CKA group, our most
striking application is the corresponding:

\begin{co}
All boundaries of the Croke-Kleiner group $\Gamma$ are $\Gamma$-equivariantly
cell-like equivalent.
\end{co}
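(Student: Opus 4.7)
The plan is to deduce this corollary as an immediate specialization of Theorem~A. The only non-tautological step is to verify that the original Croke-Kleiner group $\Gamma$ of \cite{Croke2000} belongs to the class of Croke-Kleiner admissible groups introduced in \cite{Croke2002}. This inclusion is essentially built into the definition: the admissible class was formulated in \cite{Croke2002} precisely to capture the structural features (a graph of groups with $\Z^2$ vertex groups and $\Z$ edge groups, glued by sufficiently generic amalgamations) that drive the multi-boundary pathology first exhibited by $\Gamma$. So the first step is to recall the standard graph-of-groups presentation of $\Gamma$ and check the admissibility axioms of \cite{Croke2002} line by line; this is a straightforward bookkeeping exercise, and the authors of \cite{Croke2002} already record the conclusion explicitly.

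Once CKA-membership is in hand, the second and final step is to quote Theorem~A with $G = \Gamma$: any two CAT(0) boundaries of $\Gamma$ are $\Gamma$-equivariantly cell-like equivalent. No further geometric or topological argument is needed, since the corollary is strictly a special case of the theorem.

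The hard part, such as it is, is not in the proof of the corollary but upstream, in Theorem~A itself; all of the substantive geometry of the Tits boundary, of the vertex and edge spaces in the associated Bass-Serre tree, and of the cell-like maps between different boundaries, is absorbed into that statement. The only risk for the corollary is a definitional mismatch between the 2000 example and the 2002 axiomatization, but this is a bibliographic rather than a mathematical concern, and it is explicitly resolved in \cite{Croke2002} and in the paragraph preceding Theorem~A.
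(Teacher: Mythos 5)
Your proposal is correct and matches the paper's treatment exactly: the paper derives this corollary immediately from Theorem~A together with the observation that the original Croke--Kleiner group $\Gamma$ (the fundamental group of the graph of groups with vertex groups $F_2\times\Z$ and edge group $\Z^2$) is a CKA group in the sense of \cite{Croke2002}. One caution: your parenthetical gloss of the admissible class swaps the roles of vertex and edge groups (it is the \emph{edge} groups that are $\Z^2$, while the vertex groups have infinite cyclic center with nonelementary hyperbolic quotient), but since your argument defers to the actual axioms of \cite{Croke2002} rather than to that gloss, this does not affect its validity.
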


\begin{rk}
In \cite{Wilson2005}, Wilson exhibited an uncountable family of topologically distinct
boundaries for $\Gamma$. Our result proves not only that all of those
boundaries are $\Gamma$-equivariantly cell-like equivalent, but any other
CAT(0) boundary that $\Gamma$ might admit, is also $\Gamma$-equivariantly
cell-like equivalent to the originals.  Another infinite family of groups admitting
uncountably many CAT(0) boundaries can be found in the category of knot groups
\cite{Mooney2008}.  Those are also CKA groups, and hence covered by Theorem~A.
\end{rk}

The main goal of this paper is a proof of Theorem~A.
It will be accomplished
by employing the technique developed in \cite{Guilbault}. A copy of that paper will be
useful to the reader of this manuscript. Before getting started, we note that
even the non-equivariant version of the Cell-like Equivalence Question remains
open for the general class of CAT(0) groups.

\section{Introduction}

%

Precise formulations of the notion of `shape equivalence' and `cell-like
equivalence' can be found in \cite{Guilbault} along with examples
illustrating the contrast between the concepts.

Roughly speaking, two finite-dimensional compacta $X$ and $Y$ are
declared to be \emph{shape equivalent} if whenever they are embedded
in some high-dimensional Euclidean space, `typical neighborhoods' of
one are homotopy equivalent to typical neighborhoods of the other.
So, for instance, the topologist's sine curve is shape equivalent
to a single point, since `typical neighborhoods' when embedded in $\R^2$
are disks.
This is formalized by writing $X$ and $Y$ as the limits of inverse
sequences of polyhedral neighborhoods and constructing a ladder diagram
which commutes up to homotopy (after possibly passing to subsequences).
An elementary development of shape theory together with a discussion
of its relationship to group boundaries can be found in \cite{Guilbault-Ends}.

A compactum $X$ is called \emph{cell-like} if
it is shape equivalent to a point.  An equivalent definition for finite-dimensional compacta is to say that $X$ is cell-like if whenever it is
embedded in a high-dimensional Euclidean space, it contracts in every
neighborhood.  So contractible compacta are cell-like.  In fact, all of
the cell-like sets considered in this paper are contractible.  Therefore the
reader unfamiliar with the term ``cell-like space'' may replace it
with ``compact contractible space'' for the purposes of understanding our results.

A \emph{cell-like map} is a continuous surjection $X\to Y$ such that the
preimage of every point is cell-like.
A pair of compacta $X$ and $Y$ are cell-like equivalent if
there exists a third compactum $Z$ and a pair of cell-like maps $X\overset{%
f_{1}}{\longleftarrow }Z\overset{f_{2}}{\longrightarrow }Y$.
To obtain an equivalence
relation we permit several intermediate
spaces: $X$ and $Y$ are declared to be cell-like equivalent if there exists
a diagram of compacta and cell-like maps of the form:%
\begin{equation}
\begin{tabular}{ccccccccc}
& $Z_{1}$ &  & $Z_{3}$ &  &  &  & $Z_{2n+1}$ &  \\ 
& $\swarrow \qquad \searrow $ &  & $\swarrow \qquad \searrow $ &  & $%
\swarrow \quad \cdots \quad \searrow $ &  & $\swarrow \qquad \searrow $ & 
\\ 
$X$ &  & $Z_{2}$ &  & $Z_{4}$ &  & $Z_{2n}$ &  & $Y$%
\end{tabular}
\label{diagram:zig-zag}
\end{equation}%
In this setup we write $X\overset{\text{CE}}{\sim }Y$.
Clearly, cell-like equivalence is weaker than topological equivalence;
moreover, if we require that all spaces involved be finite-dimensional,
then cell-like equivalence is stronger than shape equivalence
\cite{Sher1972}.  Since boundaries of CAT(0) groups are always finite
dimensional, this is the case for us \cite{Swenson1999}.

Compacta $X$ and $Y$, each equipped with a $G$%
-action, are declared to be `$G$-equivariantly cell-like equivalent' if
there exists a diagram of type (\ref{diagram:zig-zag}) for which each of
the $Z_{i}$ also admits a $G$-action, and the cell-like maps are equivariant.

In our previous paper we proposed a general strategy for
obtaining an affirmative solution to the equivariant version of
Bestvina's question \cite{Guilbault}.  That strategy is
straighforward; it relies on a single pair of cell-like maps and
is described at the end of this section.
In that paper we also presented some specific cases where our strategy works.
%

In this sequel, we go far beyond the results obtained in \cite{Guilbault}.
We show that our approach can be used to answer Bestvina's Equivariant
Cell-Like Equivalence Question for the entire class of Croke-Kleiner admissible
groups -- a collection that contains many of the known examples of groups
admitting multiple boundaries.
To our knowledge, we provide the first example where
it is proven for a group $G$ with multiple boundaries that all of its
boundaries are cell-like equivalent.

\subsection{CAT(0) groups and their boundaries}

A geodesic metric space $X$ is called a \textit{CAT(0) space} if each
of its triangles is at least as thin as the corresponding comparison
triangle in the Euclidean
plane. A group $G$ is called a \textit{CAT(0) group} if it acts
\textit{geometrically} (properly and
cocompactly via isometries) on a proper CAT(0) space.
A metric $d$ on a CAT(0) space $X$ satisfies a property called \textit{convexity
of metric}, which says that given any pair of geodesics $\alpha$ and $\beta$
parameterized to have constant speed over $[0,1]$, the function
$t\mapsto d(\alpha(t),\beta(t))$ is a convex function.

If $X$ is proper, then it can
be compactified by the addition of its \textit{visual boundary} $\partial X$ which
may be defined as the space of all equivalence classes of geodesic rays in $%
X $, where a pair of rays $\alpha ,\beta :[0,\infty )\rightarrow X$ are
equivalent if they are asymptotic, i.e., if $\left\{ d\left( \alpha \left(
t\right) ,\beta \left( t\right)\right) \mid t\in \lbrack 0,\infty ) \right\} 
$ is bounded above. When $G$ acts geometrically on $X$ we call $\partial X$
\textit{a boundary} for $G$. Clearly, the action of $G$ on $X$ induces an action by $%
G $ on $\partial X$.
Here $\partial X$ receives the \textit{cone topology} whereby two geodesic rays
are declared to be close if they track together a long time before diverging.

The space $X\cup\partial X$ is seen to be a compactification of $X$ in the
following way.  Fix a basepoint $x_0\in X$, and identify $X$ with the
space of geodesic line segments emanating from $x_0$ by mapping the point
$x$ to the geodesic $[x_0,x]$.  If a sequence of points $(x_n)$ remains
unbounded, then the geodesics $\gamma_n=[x_0,x_n]$ get longer and longer.  Since $X$
was assumed to be proper, convexity of the metric and the Arzela-Ascoli theorem
guarantee that, after possibly passing to a subsequence, $(\gamma_n)$ has
a limit $\gamma$ which is a geodesic ray.
Formally, if the $\gamma_n$ are parameterized to have unit speed over an appropriate
interval then $\gamma_n\to\gamma$ uniformly on compact subsets of $[0,\infty)$.
For more details on this construction (and other properties of CAT(0) spaces)
the reader may wish to consult \cite{Bridson1999}.

Nonuniqueness of the boundary of a CAT(0) group $G$ is possible since $G$
can act on more than one CAT(0) space.  The first example of a group
acting on multiple CAT(0) spaces whose boundaries are not homeomorphic
was given by Croke and Kleiner in \cite{Croke2000}.
When the action by $G$ is free,
covering space techniques and other topological tools allowed Bestvina
\cite{Bestvina1996}
to show that all boundaries of $G$ are shape equivalent. Later,
Ontaneda \cite{Ontaneda2005} extended that observation to include
\emph{all} CAT(0) groups. In those cases where all CAT(0) boundaries of a given $G$ are
homeomorphic we say that $G$ is \emph{rigid}.
Clearly Bestvina's Equivariant Cell-like Equivalence Question has a positive answer for all such groups.
A positive answer has also been given for groups which split as products
with infinite factors \cite{Mooney2009}.

\subsection{Quasi-Isometric Embeddings}
When a group $G$ acts nicely on multiple spaces, a key relationship between
those spaces is captured by the notion of `quasi-isometry'. A function
$f:(X,d)\rightarrow (X^{\prime },d^{\prime})$ between metric spaces is called a \emph{%
quasi-isometric embedding (QIE)} if there exist positive constants $\lambda $ and $%
\varepsilon $ such that for all $x,y\in X$%
\begin{equation*}
\frac{1}{\lambda }d\left( x,y\right) -\varepsilon \leq d^{\prime }\left(
f\left( x\right) ,f\left( y\right) \right) \leq \lambda d\left( x,y\right)
+\varepsilon \text{.}
\end{equation*}%
If, in addition, $X'$ is contained in some tubular neighborhood of the image of
$f$, then we call $f$ a \emph{quasi-isometry} and declare $X$
and $X^{\prime }$ to be \emph{quasi-isometric}.

By choosing a finite generating set and endowing it with the corresponding
word metric, any finitely generated group can be viewed as a metric space.
It follows from the \v{S}varc-Milnor Lemma that, up to
quasi-isometry, this metric space is independent of the choice of generating
set; in fact if $X$ is any length space on which $G$ acts geometrically,
then for any base point $x_{0}\in X$ the \textit{orbit map} $G\to X$ given by $g\mapsto gx_{0}$ is a quasi-isometry
\cite{vSvarc1955,Milnor1968}.

Given a subset $A$ of a CAT(0) space $X$, define the \emph{limset}
of $A$ to be the collection of all limit points of $A$ lying in
$\partial X$.  In other words, $\limset A=\overline{A}\setminus X$ where the
closure is taken in $\overline{X}$.
Clearly any such limset is a closed subset of $\partial X$.
If $G$ acts on a proper CAT(0) space properly discontinuously by
isometries, then we denote by $\limset(X,G)$ the limset of the image
of $G$ under the orbit map.  This provides a compactification
$G\cup\limset(X,G)$ for $G$.  This is easily seen to be independent
of basepoint, since the Hausdorff distance between any pair of $G$-orbits is finite.
If this action is cocompact then $\limset(X,G)=\partial X$.

If $G$ acts properly discontinuously on two proper CAT(0) spaces
$X$ and $Y$, then we may compare the two compactifications
$\Lambda=\limset(X,G)$ and $\Lambda'=\limset(Y,G)$.
If the identity map on $G$ extends
continuously to a map $G\cup\Lambda\to G\cup\Lambda'$,
then the restriction
$\Lambda\to\Lambda'$ is called a \textit{limset map}.
The existence of such a map is very strong.  It means that whenever
an unbounded sequence of group elements converges in the first
compactification, it also converges in the second.
Two limsets are considered \textit{equivalent} if there is a
limset map between them which is a homeomorphism.

We call $G$ \emph{strongly rigid} if whenever $G$ acts geometrically on
proper CAT(0) spaces $X$ and $Y$, the boundaries $\partial X$
an $\partial Y$ are equivalent in the above sense.
Examples of such groups include free abelian groups, $\delta $-hyperbolic
CAT(0) groups (or \emph{negatively curved} groups),
and others \cite{Kleiner1997,Hruska2005}.
Clearly Bestvina's Equivariant Cell-like Equivalence Question has a positive
answer for all strongly rigid groups.
The question also has a positive answer for certain products
\cite{Bowers1996,Ruane1999}, although these are not strongly rigid in
the sense of this paper.  CKA groups are never strongly rigid,
as Croke and Kleiner proved in \cite{Croke2002}.


\subsection{The standard strategy and our Main Conjecture}
Suppose $G$ acts geometrically on a pair of proper CAT(0) spaces
$X_{1}$ and $X_{2}$. Then the $l_2$-metric
$d=\sqrt{d_{1}^{2}+d_{2}^{2}}$ makes $%
X_{1}\times X_{2}$ a proper CAT(0) space on which $G\times G$ acts
geometrically via the product action. It is a standard fact that $\partial
\left( X_{1}\times X_{2}\right) $ is homeomorphic to the topological
join of the original boundaries \cite[Example II.8.11(6)]{Bridson1999}.
To see this, first choose a base point $%
\left( x_{1},x_{2}\right) \in X_{1}\times X_{2}$ and define slopes of
segments and rays in $X_{1}\times X_{2}$ based at
$\left( x_{1},x_{2}\right) $
in the obvious way. A ray $\alpha $ may be projected into $X_{1}$ and $%
X_{2}$ to obtain a pair of rays $\alpha _{1}$ and $\alpha _{2}$
---except in those cases where the slope is $0$ or $\infty $ which
produce an $\alpha _{i} $ that is constant. Assign to each $\alpha$ three coordinates: $\alpha_{1}$,
$\alpha_{2}$, and the slope of $\alpha$.  Keeping in mind the
exceptional cases where $\alpha $ has slope $0$ or $\infty $, we get a
correspondence between $\partial \left( X_{1}\times X_{2}\right) $ and the
quotient space%
\begin{equation*}
\partial X_{1}\ast \partial X_{2}=\partial X_{1}\times \partial X_{2}\times 
\left[ 0,\infty \right] /\sim 
\end{equation*}%
where $\left( \alpha _{1},\alpha _{2},0\right) \sim \left( \alpha
_{1},\alpha _{2}^{\prime },0\right) $ for all $\alpha _{2},\alpha
_{2}^{\prime }\in \partial X_{2}$ and $\left( \alpha _{1},\alpha _{2},\infty
\right) \sim \left( \alpha _{1}^{\prime },\alpha _{2},\infty \right) $ for
all $\alpha _{1},\alpha _{1}^{\prime }\in \partial X_{1}$. This join
contains a \emph{preferred copy} of $\partial X_{1}$ (all rays with slope $%
0$) and a \emph{preferred copy} of $\partial X_{2}$ (all rays with slope $%
\infty $) which may be identified with the boundaries of convex subspaces $%
X_{1}\times \left\{ x_{2}\right\} $ and $\left\{ x_{1}\right\} \times X_{2}$%
.

Now consider the diagonal subgroup $G^{\Delta }=\{\left( g,g\right) \mid
g\in G\}$ of $G\times G$. Clearly, $G^{\Delta }$ is isomorphic to $G$ and
acts on $X_{1}\times X_{2}$ properly by isometries.
For $g\in G$, we will denote $g^\Delta=(g,g)$.
In \cite[Section~4.1]{Guilbault}, we make the following
observations:\medskip

\begin{itemize}
\item[(i)] The map $g\longmapsto g^\Delta \left( x_{1},x_{2}\right) 
$ is a QIE of $G$ into $X_{1}\times X_{2}$, and

\item[(ii)] $\limset G^{\Delta }$ is a closed subset of $\partial
X_{1}\ast \partial X_{2}$ that misses the preferred copies of $\partial
X_{1} $ and $\partial X_{2}$.\medskip
\end{itemize}

\noindent We refer to $\Lambda=\limset G^{\Delta }$ as a \emph{schmear}
of $\partial X_{1}$ and $\partial X_{2}$.
Item (i) above is used in proving (ii) and offers hope that $\Lambda $ resembles a \textit{boundary}
for $G$. Item (ii) allows us to restrict the projections of
$\partial X_{1}\times \partial X_{2}\times (0,\infty )$ onto
$\partial X_{1}$ and $\partial X_{2}$ to obtain a pair of $G$-equivariant
\emph{schmear maps} $\phi _{1}:\Lambda \rightarrow \partial X_{1}$
and $\phi _{2}:\Lambda \rightarrow \partial X_{2}$.

Since $\Lambda$ lives in the join and misses $\partial X_1$
and $\partial X_2$, we may think of it as living in the
product $\partial X_1\times\partial X_2\times(0,\infty)$.  Here the
schmear maps are just the coordinate projection maps onto
$\partial X_1$ and $\partial X_2$.  Let $\overline{\Lambda}$
denote the image of the coordinate projection map
$\Lambda\to\partial X_1\times\partial X_2$, and
$\overline{\phi_i}:\overline{\Lambda}\to\partial X_i$ also be
coordinate projections, as in Figure \ref{fig:furstenburg}.
Following the language of Link \cite{Link2010}, we refer to $\overline{\Lambda}$ as the
\textit{Furstenburg limit set} (or \textit{F-set}) of $G^{\Delta}$.  We will also
refer to it as the F-set for the actions of $G$ on $X_1$ and $X_2$,
the maps $\overline{\phi_i}$ as \textit{the associated F-maps}, and point preimages
of these F-maps as \textit{F-fibers}.
These maps are automatically continuous, equivariant, and surjective.

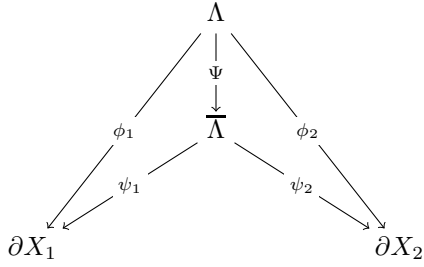
\begin{figure}
\caption{The Schmear and the Furstenburg Limit Set}
\label{fig:furstenburg}
\begin{tikzpicture}[descr/.style={fill=white,inner sep=2pt}]
	\matrix (m) [matrix of math nodes, row sep=3em,
		column sep=2.5em, text height=1.5ex, text depth=0.25ex]
		{   && \Lambda &&   \\
			&& \overline\Lambda && \\
		\partial X_1  &&&& \partial X_2 \\ } ;
	\path[->,font=\scriptsize]
		(m-1-3)	edge node[descr] {$ \Psi $}		(m-2-3)
		       	edge node[descr] {$ \phi_1 $}	(m-3-1)
		       	edge node[descr] {$ \phi_2 $}	(m-3-5)
		(m-2-3)	edge node[descr] {$ \psi_1 $}	(m-3-1)
		       	edge node[descr] {$ \psi_2 $}	(m-3-5);
\end{tikzpicture}
\end{figure}

Our standard strategy is summed up by the following:\medskip

\noindent \textbf{Main Conjecture. }\emph{Suppose }$G$\emph{\ acts
geometrically on a pair of CAT(0) spaces }$X_{1}$\emph{\ and }$X_{2}$\emph{.
Then both F-maps are cell-like; hence $\partial X_{1}$\emph{\ and }$%
\partial X_{2}$ are }$G$\emph{-equivariantly cell-like equivalent. }\medskip 

In fact, we hope for something stronger, namely that the schmear maps
themselves are cell-like.  In the case where $G$ contains a pair of
independent rank-one elements (which includes the groups studied
here), the two conjectures are equivalent by \cite[Theorems B and C]{Link2010}.
The advantage to the schmear is that it can
be realized as the limit set of an actual group action on a proper CAT(0) space.
When we pass to the F-set, this action is lost, although there is still a natural action of
$G$ on $\partial X_1\times\partial X_2$.

\subsection{The main results}

The main result of \cite{Guilbault} is the following.
We have stated it for a class of actions slightly more general than
geometric actions, although in this paper the actions will all be
geometric.

\begin{Thm}[G-M,2011]
\label{Thm:Schmear}
Assume an infinite group $G$ acts properly discontinuously
by isometries on CAT(0) spaces $X_1$ and $X_2$
such that $G\to X_1$ and $G\to X_2$ are QIEs.  Then there exists
an action of $G$ by isometries on a third CAT(0) space $X$ such
that $G\to X$ is a QIE and there are natural limset maps
$\limset G\to\partial X_i$.  If the action of $G$ on both $X_i$ is
by semi-simple isometries, then so is the action on $X$.
\end{Thm}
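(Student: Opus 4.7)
The plan is to take $X = X_1 \times X_2$ with the $\ell_2$-metric $d = \sqrt{d_1^2 + d_2^2}$ and let $G$ act diagonally via $g \mapsto g^\Delta = (g,g)$; this is exactly the construction previewed just before the theorem statement, so the argument essentially assembles observations (i) and (ii) into a clean statement. The space $X$ is proper CAT(0) as an $\ell_2$-product of proper CAT(0) spaces, and $G^\Delta$ acts properly by isometries. The orbit map $g \mapsto g^\Delta(x_1,x_2)$ is a QIE (this is observation (i)): if each $G \to X_i$ has QIE constants $(\lambda,\varepsilon)$, then
\begin{equation*}
d_i(gx_i,hx_i) \leq d(g^\Delta x, h^\Delta x) \leq \sqrt{2}\bigl(\lambda d_G(g,h) + \varepsilon\bigr),
\end{equation*}
and combining the lower bound $d_i(gx_i,hx_i) \geq \tfrac{1}{\lambda} d_G(g,h) - \varepsilon$ with the first inequality gives the required QIE bounds. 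For the semi-simple claim, the displacement of $g^\Delta$ at $(p_1,p_2)$ equals $\sqrt{d_1(p_1,gp_1)^2 + d_2(p_2,gp_2)^2}$, minimized precisely on $\Min(g,X_1) \times \Min(g,X_2)$, which is nonempty whenever $g$ is semi-simple on each factor.

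The heart of the argument is constructing the limset maps $\limset G^\Delta \to \partial X_i$. By observation (ii), $\Lambda := \limset G^\Delta$ sits inside the join $\partial X_1 \ast \partial X_2$ and misses both preferred copies, so every $\xi \in \Lambda$ has a representative $[\alpha_1,\alpha_2,m]$ with slope $m \in (0,\infty)$ and nondegenerate rays $\alpha_i \in \partial X_i$. I would define $\phi_i : \Lambda \to \partial X_i$ as the coordinate projection $\xi \mapsto \alpha_i$ and verify that extending $\id_G$ by $\phi_i$ yields a continuous map $G \cup \Lambda \to G \cup \partial X_i$. If $g_n^\Delta x \to \xi$ in $\overline{X}$ with $\xi$ avoiding the preferred copy of $\partial X_{3-i}$, then the $i$th-coordinate displacement $d_i(x_i, g_n x_i)$ must tend to infinity, and convergence of the product rays $[x, g_n^\Delta x]$ in the cone topology on $X$ forces convergence of their coordinate projections $[x_i, g_n x_i]$ to $\alpha_i$ in the cone topology on $X_i$.

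The main obstacle is observation (ii), which powers the whole argument. I would argue by contradiction: suppose $g_n^\Delta x \to \xi$ with $\xi$ in the preferred copy of $\partial X_1$. Then the slope of the product ray $[x, g_n^\Delta x]$ tends to $0$, so $d_2(x_2, g_n x_2)$ grows strictly slower than the product distance $d(x, g_n^\Delta x) = \sqrt{d_1^2 + d_2^2}$. On the other hand, the QIEs $G \to X_2$ and $G \to X$ (the latter just established from the factors) both show these two distances are comparable to the word length of $g_n$ in $G$, and hence comparable to each other, contradicting $d_2 = o(d)$. It is precisely here that the joint QIE hypothesis on both factors is used in an essential way.
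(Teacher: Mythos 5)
Your proposal is correct and takes essentially the same route as the paper: Theorem \ref{Thm:Schmear} is imported from the authors' earlier work, and the construction intended there (and sketched in the paper's strategy section) is exactly yours --- the $\ell_2$-product $X_1\times X_2$ with the diagonal $G^\Delta$-action, observation (i) giving the QIE, observation (ii) that $\limset G^\Delta$ misses the preferred copies of $\partial X_1$ and $\partial X_2$, and the coordinate projections supplying the limset maps. Your slope-versus-QIE contradiction is the right argument for (ii), and the factor-wise minset description correctly handles semi-simplicity.
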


As an application, we used this to prove

\begin{Thm}[G-M,2011]
\label{Thm:CE}
Whenever $G=\F_m\times\Z^d$ acts geometrically on two
proper CAT(0) spaces, the corresponding schmear fibers are
topological cells.  In particular, the schmear maps are
$G$-equivariant and cell-like.
\end{Thm}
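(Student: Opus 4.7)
The plan is to first invoke Theorem~\ref{Thm:Schmear} to realize the schmear $\Lambda = \limset G^\Delta$ as the limit set of a single action of $G$ by semi-simple isometries on an auxiliary CAT(0) space $X$, with the two schmear maps factoring through natural limset maps $\limset(X,G) \to \partial X_i$. This reduces the problem to identifying fibers of the induced maps inside one ambient CAT(0) geometry, where Flat Torus type arguments can be applied directly.

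Next, I would apply the Flat Torus Theorem to the central subgroup $\Z^d \trianglelefteq G$ acting on $X$. This yields a nonempty, closed, $G$-invariant, convex subspace $\Min(\Z^d) \subset X$ that splits isometrically as $\R^d \times Y$, with $\Z^d$ acting cocompactly on $\R^d$ by translations and $\F_m = G/\Z^d$ acting properly by semi-simple isometries on $Y$. Since $\F_m$ is hyperbolic and its orbit map into $Y$ is a QIE, $\limset(Y, \F_m)$ is homeomorphic to the Gromov boundary of $\F_m$, a Cantor set. Therefore $\limset G$ sits inside the join $\partial \Min(\Z^d) = S^{d-1} * \limset(Y, \F_m)$, with each limit class of an orbit sequence $g_n = (f_n, z_n)$ determined by the asymptotic direction of $z_n$ in $\R^d$, the asymptotic direction of $f_n$ in $Y$, and the slope relating their growth rates. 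The same decomposition applied inside each $X_i$ provides analogous $\R^d \times Y_i$ splittings, and the \v{S}varc-Milnor lemma identifies the two $\R^d$-factors via an affine quasi-isometry whose boundary extension is a homeomorphism $S^{d-1} \to S^{d-1}$ (here the strong rigidity of $\Z^d$ is what makes the flat boundary unambiguous).

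With this setup, a schmear fiber over $\xi \in \partial X_i$ unwinds according to which stratum of the join $\xi$ lies in. Over a point $\xi$ in the hyperbolic factor $\limset(Y_i, \F_m)$, the fiber consists of all possible pairs (flat direction in the opposite $S^{d-1}$, slope in $[0, \infty]$) that can occur asymptotically, which is the open cone on $S^{d-1}$ compactified by collapsing the slope $\infty$ endpoint; this yields a closed $d$-ball. Over a point $\xi$ lying in the rigid sphere $S^{d-1}$, the fiber collapses to a point. Intermediate join points interpolate between these extremes, giving closed cells of dimension between $0$ and $d$; in every case the fiber is a topological cell, hence contractible and in particular cell-like. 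The main obstacle is the careful slope and parameter bookkeeping needed to verify that this fiber description is continuous in $\xi$ and does not admit degenerate or non-cell configurations at the join strata; in particular one must use the structural results of Link (cited in the discussion of the Main Conjecture) to pass cleanly between the schmear and the F-set, and to ensure that pathological QI-adjustments do not enlarge the predicted fibers.
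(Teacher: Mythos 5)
Your skeleton (flat--torus splitting, rigidity of the $\Z^d$ and $\F_m$ factors, join/slope bookkeeping) points in the same general direction as the argument this theorem is imported from, but your pivotal step is invalid, and it fails at exactly the point where the content of the theorem lies. The inference ``$\F_m$ is hyperbolic and its orbit map into $Y$ is a QIE, hence $\limset(Y,\F_m)$ is the Gromov boundary, a Cantor set'' is false in general: the non-flat factor $Y$ of $\Min(\Z^d)$ is \emph{not} hyperbolic (for the auxiliary space of Theorem~\ref{Thm:Schmear}, which is essentially $X_1\times X_2$ with the diagonal action, $Y$ contains a product of two hyperbolic factors), so stability of quasi-geodesics is unavailable, and a QIE image can have a limit set far larger than the domain's boundary. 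This is precisely the pathology the paper warns about --- a geodesic ray QIE-embedded in $\E^2$ with limit set the whole circle, and Staley's examples. What is true is that $\F_m$ acts geometrically on each original factor $Y_i$ and hyperbolic groups are strongly rigid; but even granting that, $\limset(Y,\F_m)$ is \emph{not} a Cantor set: along a sequence $f_n\to\eta\in\partial\F_m$, the ratio $d_{Y_2}(y_2,f_ny_2)/d_{Y_1}(y_1,f_ny_1)$ can oscillate within the quasi-isometry constants, so the fibers over Cantor-set points are nondegenerate arcs of slopes. This oscillation is the entire reason the theorem is nontrivial, and the machinery that the proof actually rests on --- the slope functions of \cite[Section~5]{Guilbault}, their continuous extension to infinity, and their asymptotic boundedness on the $H$-factor --- exists precisely to control it; none of this appears in your proposal.

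Your fiber description is also wrong in both directions. A fiber over a point $\xi$ of the hyperbolic stratum cannot be the closed cone on the opposite $S^{d-1}$: along any sequence $(f_n,z_n)$ whose $X_1$-slope tends to $0$, the QIE property forces $|z_n|\lesssim|f_n|$, which bounds the attainable slope in $X_2$ by the quasi-isometry constants; hence the fiber stays uniformly away from the slope-$\infty$ sphere (this is the same estimate behind observation (ii) of the schmear construction, that the limset misses the preferred copies), so no closed $d$-ball of the kind you describe can occur. Moreover, the attainable flat directions are not all of $S^{d-1}$ but only those permitted by the ``shear'' homomorphisms $\tau_i\colon\F_m\to\R^d$ recording how the free factor translates the flat factor in each space; these factor through $H_1(\F_m)$ and need not agree for the two actions. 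In the cleanest case, where both actions are honest products, the F-fiber over a hyperbolic boundary point is a single point and the schmear fiber is an arc of join-slopes --- not a $d$-ball --- so the actual cells are assembled from slope intervals and a convex body of shear data, objects your argument never introduces. Finally, your appeal to Link's results to pass between the schmear and the F-set is explicitly unavailable here: as the paper notes immediately after stating this theorem, $\F_m\times\Z^d$ has higher rank (no rank-one elements), so the results of \cite{Link2010} do not apply to these groups.
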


Note that since those groups have higher rank, the work of \cite{Link2010} does not apply to them.
We can now state the main theorem of this paper; it immediately
implies Theorem~A.

\begin{Thm}[Main Theorem]
\label{main theorem}
Let $G$ be a Croke-Kleiner admissible group acting geometrically
on two proper CAT(0) spaces $X_1$ and $X_2$ and $\overline{\Lambda}$
denote the F-set of the pair.  Then the corresponding F-fibers are contractible.
In particular, the F-maps $\overline{\Lambda}\to\partial X_i$ are
$G$-equivariant and cell-like.
\end{Thm}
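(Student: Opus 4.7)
The plan is to exploit the graph-of-groups structure intrinsic to a CKA group $G$: such a $G$ is the fundamental group of a finite graph of groups whose vertex groups have the form $H_v\times\Z$ (with $H_v$ a hyperbolic surface group or a non-abelian free group) and whose edge groups are $\Z^2$. Since the splitting is JSJ-rigid, any proper cocompact CAT(0) action of $G$ on a space $X$ induces a canonical decomposition of $X$ into convex \emph{vertex blocks} $B_v$ (each carrying a product structure inherited from $H_v\times\Z$) and \emph{edge walls} $W_e$ (each a $2$-dimensional Euclidean flat), indexed by the vertices and edges of the Bass-Serre tree $T$ of $G$. The combinatorics of $T$ is intrinsic to $G$, so the decompositions of $X_1$ and $X_2$ are indexed by the \emph{same} tree.

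Next I would assign to every geodesic ray in $X_i$ an \emph{itinerary} in $T$, namely the edge-path traced by a coarse projection of the ray onto $T$. The key observation is that if $(g_n)\subset G$ represents a pair $(\xi,\eta)\in\overline{\Lambda}$ --- meaning $g_n\to\xi$ in $X_1$ and $g_n\to\eta$ in $X_2$ --- then $(g_n\cdot v_0)\subset T$ has a uniquely determined limiting behavior in $T$, forcing the itineraries of $\xi$ and $\eta$ to coincide. Thus the F-fiber $F_\xi:=\overline{\phi_1}^{-1}(\xi)$ is contained in the set of $\eta\in\partial X_2$ with the same $T$-itinerary as $\xi$, and a converse construction of $G$-sequences realizing the specified endpoints will give the reverse inclusion. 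I would then split on the itinerary of $\xi$. In the \emph{bounded} case, $\xi$ lies in $\partial B_v$ for a single vertex block, and $F_\xi$ is an F-fiber for the vertex group $H_v\times\Z$ acting on the two copies of $B_v$; when $H_v$ is free, this is exactly the setting of Theorem~\ref{Thm:CE}, and when $H_v$ is a surface group an analogous argument should yield contractibility (in fact, a cell structure).

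The hard case is when $\xi$'s itinerary is \emph{unbounded}: $\xi$ crosses a ray of walls $W_1,W_2,\ldots\subset X_1$, and every $\eta\in F_\xi$ crosses the parallel ray $W_1',W_2',\ldots\subset X_2$. I plan to contract $F_\xi$ by telescoping along this ray: within each block one may continuously slide a representative of $\eta$ onto a distinguished basepoint ray of the same itinerary, using the $\Z$-factor of the block's product structure as deformation parameter. The principal obstacle lies in this unbounded case. One must (i) verify that the per-block sliding homotopies assemble compatibly across wall intersections, (ii) establish continuity of the resulting homotopy ``at infinity'', where infinitely many block transitions accumulate, and (iii) handle points $\eta\in F_\xi$ whose itinerary first diverges from $\xi$'s at arbitrarily deep stages. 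The necessary geometric control will come from the convexity of the CAT(0) metric together with the detailed wall/flat analysis of Croke--Kleiner in \cite{Croke2002}; the rank-one hypothesis invoked via \cite{Link2010} ensures that the fiber-level statement we prove here is equivalent to the schmear-level statement needed for Theorem~A.
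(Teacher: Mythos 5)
There are genuine gaps here, and the most serious one is structural. A CKA vertex group is a central extension $1\to\Z\to G_v\to H_v\to 1$ where $H_v$ is an \emph{arbitrary} nonelementary hyperbolic group; it need not split as a product $H_v\times\Z$ (only a finite-index subgroup does, via the Flat Torus Theorem), and $H_v$ is certainly not restricted to surface or free groups. Consequently your bounded-itinerary case, which reduces to Theorem~\ref{Thm:CE} for $\F_m\times\Z^d$ ``and an analogous argument for surface groups,'' does not cover the class of groups in the statement. Worse, even granting a product vertex group, the reduction itself is unjustified: the F-fiber over a point $\zeta\in\partial X_v$ is not the F-fiber of the vertex group acting on the two vertex blocks, because it contains limits of sequences $(g_n)$ that exit the vertex space entirely ($g_nv\neq v$). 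Controlling exactly those sequences is the hard part of the problem: one must show, using the hyperbolicity of $\overline{Y_v}$, the local finiteness of the axes $\ELL{v,e}$, the equivariant boundary homeomorphisms $\partial\overline{Y_v}\to\partial\overline{Y'_v}$, and slope/angle estimates for the central $\Z$, that such sequences can only accumulate on the closure of one or two longitudes of $\partial X'_v$ (when $\zeta$ is not a pole), respectively on a star of longitude segments through the corresponding pole (when $\zeta$ is a pole, where the fiber is genuinely bigger than a point and one must rule out the antipodal pole via a slope argument as in Lemma~\ref{lemma:slope thingy}). Your ``itineraries coincide'' claim, with its proposed converse inclusion, is both unproven (it is essentially equivalent to these hard lemmas) and, as an equality, false: pole fibers and non-pole fibers are proper subsets of the same-itinerary set.

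The second gap is your unbounded (irrational) case, which you correctly flag as unresolved: the telescoping homotopy across infinitely many blocks, with its continuity-at-infinity and deep-divergence issues, is not carried out, so this is an outline rather than a proof. You have also inverted the actual difficulty: for irrational $\zeta$ no homotopy needs to be built at all. One shows the fiber contains no rational point (this follows from the same containment lemma used in the rational case), hence lies in $IX'$, whose components Croke and Kleiner already proved are points and arcs; combining this with connectedness of fibers (Theorem~\ref{Thm:connected fibres}, which your proposal never establishes or invokes, yet which is what converts ``contained in an arc'' into ``contractible'' in every case) finishes that case immediately. Finally, a minor inversion: the appeal to \cite{Link2010} is not needed for Theorem~A --- cell-likeness of the F-maps directly yields the equivariant CE equivalence; Link's result only upgrades the conclusion from F-fibers to schmear fibers.
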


To illustrate some of the subtlety, the reader should keep in mind
that a geodesic ray may be quasi-isometrically embedded in $\E^2$
in such a way that its limit set is the entire circle boundary,
which is certainly not cell-like!  In fact,
Staley \cite{Staley2009} has shown that for the same class of
groups considered in Theorem \ref{Thm:CE}, when the dimension
of the boundary is bigger than 1, there are geometric actions
on CAT(0) spaces $X$ and $Y$ for which the images of geodesic
rays under equivariant quasi-isometries $X\to Y$ have exotic
limit sets at infinity.

As mentioned above, when we combine our theorem with the results of
\cite{Link2010}, we get

\begin{co}
Schmear fibers for Croke-Kleiner admissible groups are cell-like.
\end{co}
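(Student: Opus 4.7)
My plan is to obtain this corollary as a direct consequence of the Main Theorem (Theorem \ref{main theorem}) combined with the equivalence provided by \cite[Theorems B and C]{Link2010}. The Main Theorem already establishes that each F-fiber is contractible, and hence cell-like, so the F-map $\overline{\phi_i}:\overline{\Lambda}\to\partial X_i$ is cell-like. The remaining task is to transfer this cell-likeness across the projection $\Psi:\Lambda\to\overline{\Lambda}$ that forgets the slope coordinate, so that the schmear map $\phi_i=\overline{\phi_i}\circ\Psi$ also has cell-like point preimages. This transfer is precisely the content of Link's results: under the hypothesis that $G$ contains a pair of independent rank-one isometries in its action on each $X_i$, cell-likeness of the F-map is equivalent to cell-likeness of the schmear map, because the slope fiber over each pair $(\alpha_1,\alpha_2)\in\overline{\Lambda}$ is either a single point or a nondegenerate interval.

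Thus the single substantive step I need is to verify that every CKA group, in any of its geometric CAT(0) actions, admits a pair of independent rank-one isometries. This should follow from the graph-of-groups structure analyzed in \cite{Croke2002}: each vertex group is of the form $F\times\Z$ with $F$ a nonabelian free group, and any geometric action of the ambient group $G$ restricts to a geometric action of each vertex group. A generic element of the free factor acts as an axial isometry whose axis is transverse to the central $\Z$-flats, and hence is rank-one. Picking two such elements whose axes have four distinct endpoints in $\partial X_i$ --- for instance, elements arising from vertex groups stabilizing distinct vertices of the Bass-Serre tree, or distinct non-commuting conjugates from one vertex group --- establishes independence.

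Granting this observation, the corollary is immediate: the Main Theorem provides cell-like F-maps, the rank-one hypothesis lets us invoke Link's equivalence, and the schmear maps are thereby cell-like. I do not anticipate a serious obstacle at this stage, since the entire geometric content of the corollary is already packaged in the Main Theorem; what remains is the well-known structural feature of CKA groups that supplies the rank-one isometries required to activate Link's machinery.
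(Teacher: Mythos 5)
Your overall route coincides with the paper's: the corollary is deduced by combining Theorem \ref{main theorem} with \cite[Theorems B and C]{Link2010}, whose hypothesis is that $G$ contain a pair of independent rank-one elements; the paper does exactly this, asserting that hypothesis as a known structural fact about CKA groups.

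The gap lies in the one substantive step you added, namely your construction of the rank-one isometries, and that step fails. First, a CKA vertex group need not have the form $F\times\Z$ with $F$ free: the definition only requires that $\overline{G_v}$ be a central extension of a nonelementary hyperbolic group $\overline{H_v}$ by $\Z$, and such an extension need not split (only a finite-index subgroup does, via \cite[Theorem~II.7.1(5)]{Bridson1999}), nor need $\overline{H_v}$ be free. More seriously, no element of any vertex group is ever rank-one, no matter how ``transverse'' to the central direction it is. If $g\in G_v$ projects to an axial isometry $h$ of the hyperbolic factor $\overline{Y_v}$ with axis $A$, then $g$ preserves the 2-flat $A\times\R$ inside $Y_v\cong\overline{Y_v}\times\R$, and every axis of $g$ is a line of constant slope inside that flat; a geodesic contained in a 2-flat bounds a flat half-plane, so $g$ is not rank-one. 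Transversality to the $\Z$-direction only changes the slope of the axis within the flat. The genuine rank-one isometries of a CKA group are the elements acting hyperbolically on the Bass-Serre tree $T$: their axes cross infinitely many vertex spaces and their endpoints are irrational points of $\partial X$, and independence is obtained by choosing two such elements with four distinct endpoints (e.g.\ suitable conjugates). Your argument is repaired by substituting tree-hyperbolic elements for your vertex-group elements, or by simply citing this structural property of admissible groups from \cite{Croke2002}, which is what the paper implicitly does.
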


In any case, all boundaries of the Croke-Kleiner group are now seen to be
equivariantly cell-like equivalent.
Along the way, we prove a much weaker result about schmear maps for general CAT(0) groups.

\begin{Thm}[Schmear Fibers are Connected]
\label{Thm:connected fibres}
Let $G$ be a CAT(0) group acting geometrically on two proper CAT(0)
spaces.  Then the corresponding schmear fibers are connected.
\end{Thm}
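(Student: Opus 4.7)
The plan is to reduce connectedness of an arbitrary schmear fiber $\phi_1^{-1}(\alpha)$ to a general lemma about accumulation sets at infinity in a proper CAT(0) space, and then, for any two prescribed points $p,q\in\phi_1^{-1}(\alpha)$, to construct a single sequence in the diagonal orbit $G^\Delta\cdot(x_1,x_2)\subset X_1\times X_2$ whose set of accumulation points in $\partial(X_1\times X_2)$ is connected, is contained in $\phi_1^{-1}(\alpha)$, and contains both $p$ and $q$.

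The key lemma I would establish is: if $(z_n)$ is a sequence in a proper CAT(0) space $Y$ with basepoint $y_0$ such that $d(y_0,z_n)\to\infty$ and $d(z_n,z_{n+1})\le M$ for a fixed $M$, then its set of accumulation points in $\partial Y$ is connected. The proof uses CAT(0) comparison: the angle at $y_0$ between $[y_0,z_n]$ and $[y_0,z_{n+1}]$ is bounded by the Euclidean comparison angle, which is $O(M/N)\to 0$ once $d(y_0,z_n),d(y_0,z_{n+1})\ge N$. Hence in any metric $\rho$ compatible with the cone topology on $\overline{Y}$, consecutive distances $\rho(z_n,z_{n+1})$ tend to $0$, and a standard argument gives connectedness: if the accumulation set split into disjoint nonempty compact pieces $A,B$ with $\rho$-distance $\delta>0$, then infinitely many $n$ would have $z_n$ in the $\delta/3$-neighborhood of $A$ and $z_{n+1}$ in the $\delta/3$-neighborhood of $B$, contradicting $\rho(z_n,z_{n+1})\to 0$.

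For the construction, fix the geodesic ray $\eta_\alpha$ from $x_1$ to $\alpha$. By cocompactness of the $G$-action on $X_1$ there is a ``backbone'' sequence $(g_k)\subset G$ with $d_1(g_k x_1,\eta_\alpha(k))\le D$ for all $k\ge 1$. The central estimate, used repeatedly, is that a uniform bound on $d_1(gx_1,g'x_1)$ forces (by the QIE $G\to X_1$) a uniform bound on $d_G(g,g')$, which in turn (by the QIE $G\to X_2$) forces a uniform bound on $d_2(gx_2,g'x_2)$. Along the backbone this yields $d((g_k x_1,g_k x_2),(g_{k+1}x_1,g_{k+1}x_2))\le M_0$ in the $\ell_2$-product metric, for a fixed $M_0$. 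Now let $(h_n^{(1)}),(h_n^{(2)})\subset G$ be sequences realizing $p,q$; after passing to subsequences I may assume $h_n^{(i)}x_1$ lies within $D$ of some $\eta_\alpha(k_n^{(i)})$ with $k_n^{(i)}\to\infty$. Interleaving the $h_n^{(i)}$ into the backbone --- inserting $(h_n^{(i)}x_1,h_n^{(i)}x_2)$ between $g_{k_n^{(i)}}^\Delta(x_1,x_2)$ and $g_{k_n^{(i)}+1}^\Delta(x_1,x_2)$ --- preserves the bounded-consecutive-distance property by the same estimate.

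Applying the lemma to this interleaved sequence in the CAT(0) product $X_1\times X_2$, its accumulation set in $\partial(X_1\times X_2)$ is connected. Its first coordinate converges to $\alpha$, and the slopes $d_2(x_2,\cdot)/d_1(x_1,\cdot)$ stay in a compact subinterval of $(0,\infty)$ by the QIE property, so every accumulation point lies in $\{\alpha\}\times\partial X_2\times(0,\infty)\cap\Lambda=\phi_1^{-1}(\alpha)$; by construction it contains both $p$ and $q$. The main obstacle I anticipate is bookkeeping: preserving the bounded-jump property at every transition between a backbone element and an inserted $h_n^{(i)}$, and arranging the insertion schedule so that both $p$ and $q$ genuinely arise as accumulation points rather than being overwhelmed by backbone terms --- handled by passing to appropriate subsequences of the realizing sequences and spacing the insertions suitably along the backbone.
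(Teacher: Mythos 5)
Your key lemma is correct: a sequence tending to infinity in a proper CAT(0) space with uniformly bounded consecutive jumps does have connected accumulation set in the visual boundary, and it is a reasonable discrete substitute for the fact the paper's proof rests on (a Hausdorff limit of connected compacta is connected). The QIE bookkeeping is also fine: bounded $d_1$-jumps force bounded jumps in $G$ and hence bounded $d_2$-jumps, which is exactly why the backbone has bounded jumps in the product. The argument breaks, however, at the splicing step: you assert that, after passing to subsequences, the realizing sequences $h^{(i)}_n x_1$ for $p$ and $q$ lie within a uniform distance $D$ of the ray $\eta_\alpha$. This is false in general. Convergence $h^{(i)}_n x_1\to\alpha$ in the cone topology only says that the geodesics $[x_1,h^{(i)}_n x_1]$ converge to $\eta_\alpha$ uniformly on compact sets; the points themselves may stray arbitrarily far from the ray (already in $\E^2$ the points $(n,\sqrt{n})$ converge to the endpoint of the positive $x$-axis while their distance to that axis tends to infinity). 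Passing to a subsequence cannot repair this, since the distance from $h^{(i)}_n x_1$ to $\eta_\alpha$ may tend to infinity, and without it the inserted points are not boundedly close to their backbone neighbors, so your lemma does not apply to the interleaved sequence. Note also that if your assumption did hold, every point of the fiber would already be an accumulation point of the backbone itself; the entire difficulty of the theorem --- and the reason fibers can be large, cf.\ the paper's warning that a QIE ray in $\E^2$ can have the whole circle as limit set --- is precisely that sequences converging to $\alpha$ in $X_1$ can fan out far from $\eta_\alpha$.

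The repair is essentially the paper's proof. Instead of chaining the $p$-points to the $q$-points along a bounded neighborhood of the ray, connect $h^{(1)}_m x$ to $h^{(2)}_m x$ by a chain whose $X_1$-image descends the geodesic $[x_1,h^{(1)}_m x_1]$ down to a level $n=n(m)\to\infty$ at which both geodesics $[x_1,h^{(1)}_m x_1]$ and $[x_1,h^{(2)}_m x_1]$ pass within distance $1$ of $\eta_\alpha(n)$ (such $n(m)$ exist because both sequences converge to $\alpha$), crosses over there, and climbs back up. Consecutive points of such a chain are boundedly close in $X_1$, so your QIE estimate and your key lemma still apply to the concatenated sequence; and convexity of the metric guarantees that the whole $m$-th chain lies in a cone-topology neighborhood $U_{n(m)}$ of $\alpha$ with $n(m)\to\infty$, so every accumulation point still projects to $\alpha$, while the points $h^{(i)}_m x$ in the chain force $p,q$ to be among the accumulation points. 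This descend-and-cross routing is exactly what the paper does, with paths in the graph of an equivariant quasi-isometry $X_1\to X_2$ in the torsion-free case; its closing remark --- replace the paths by chains in the orbit $Gx$ --- is precisely the discrete version to which your key lemma is suited.
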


\subsection{Relationship to the Tits Boundary}

The Tits metric induces another common topology on a CAT(0) boundary.
This gives it a beautiful geometric structure where geodesics in the
boundary correspond to the presence of ``flatness'' in the space.  Kleiner has shown,
for instance, that for a CAT(0) space admitting a geometric group action,
the dimension of the Tits boundary is exactly one less than
the dimension of the largest copy of Euclidean space which can be embedded
\cite{Kleiner1999}.

All Tits boundaries of CKA groups are obviously homeomorphic,
so Bestvina's question does not appear interesting on that level.
Nonetheless we find it curious that in the examples of this paper (and those of our previous paper),
F-fibers turn out also to be contractible when given the Tits topology.
This leads us to wonder if sequences converging to
a common boundary point under one group action and which ``fan out'' in
another, are only allowed to do so in ``directions of flatness''.

\section{Schmear Fibers are Connected}

We begin by proving Theorem \ref{Thm:connected fibres}.
Recall that if $C$ is a metric compactum, then the
Hausdorff metric on the space $C'$ of subcompacta
turns $C'$ into a metric space.  It is an exercise to prove that
a Hausdorff limit of connected compacta is connected.

\begin{proof}[Proof of Theorem \ref{Thm:connected fibres}]
Denote $X=X_1\times X_2$ and
choose basepoints $x_i\in X_i$ and $x=(x_1,x_2)\in X$.
It will be easier to see the proof if $G$ is torsion free.
Then
\[
	\pi_1(X_1/G)=\pi_1(X_2/G)=G.
\]
Choose a homotopy equivalence $\overline f:X_1/G\to X_2/G$
which sends the image of $x_1$ to the image of $x_2$.
This can be lifted to an equivariant homotopy equivalence
$f:X_1\to X_2$ which sends $x_1$ to $x_2$.
Since this restricts to the orbit map $Gx_1\to Gx_2$, it is a
quasi-isometry.  Let $\widetilde f:X_1\to X_1\times X_2$ denote the
graph of $f$, which is an equivariant
$(\lambda,\epsilon)$-quasi-isometric proper homeomorphic embedding
for some $\lambda\ge 1$ and $\epsilon\ge 0$.  Denote $x=(x_1,x_2)$.
The image $\widetilde X_1$ contains $Gx$ as a quasi-dense subset,
and hence has the same limset as $G$.
$\phi_i$ is the extension of coordinate projection
$\widetilde X_1\to X_i$.

Consider the following claim:

\begin{claim}
Let $(g_n),(h_n)\subset G$ be two sequences such
that $g_nx_1,h_nx_1\to\zeta_1\in\partial X_1$.
Then there exists a sequence of paths $\widetilde\gamma_n$
in $\widetilde X_1$ joining $g_nx$ to $h_nx$
such that their images $\gamma_n$ in $X_1$ converge as a
Hausdorff limit in $X_1\cup\partial X_1$ to the
point $\zeta_1$.
\end{claim}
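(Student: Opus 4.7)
The plan is to build the paths $\gamma_n$ directly in $X_1$ as piecewise geodesics that shadow a common reference ray, and then lift them to $\widetilde X_1$ using $\widetilde f$. Since $g_nx_1, h_nx_1 \to \zeta_1$ in the cone topology, the unit-speed CAT(0) geodesics $[x_1, g_nx_1]$ and $[x_1, h_nx_1]$ converge uniformly on compact subsets of $[0, \infty)$ to the unique geodesic ray $\gamma$ from $x_1$ representing $\zeta_1$. My first step will be to choose radii $R_n \to \infty$ slowly enough that, for each $n$, both of these segments stay within $1/n$ of $\gamma$ throughout $[0, R_n]$ and satisfy $R_n < \min\{d(x_1, g_nx_1), d(x_1, h_nx_1)\}$.

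Letting $a_n, b_n$ denote the points at distance $R_n$ from $x_1$ on $[x_1, g_nx_1]$ and $[x_1, h_nx_1]$ respectively, one then has $d(a_n, b_n) < 2/n$. I would define $\gamma_n$ as the concatenation of three geodesic pieces: the sub-segment of $[x_1, g_nx_1]$ traversed backward from $g_nx_1$ to $a_n$, the short geodesic $[a_n, b_n]$, and the sub-segment of $[x_1, h_nx_1]$ from $b_n$ out to $h_nx_1$. Because $\widetilde f$ is an equivariant homeomorphic embedding with $\widetilde f(gx_1) = gx$ for every $g \in G$, the composite $\widetilde\gamma_n := \widetilde f \circ \gamma_n$ is automatically a continuous path in $\widetilde X_1$ joining $g_nx$ to $h_nx$ whose coordinate projection to $X_1$ is $\gamma_n$.

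The remaining task is to establish Hausdorff convergence $\gamma_n \to \{\zeta_1\}$ in $X_1 \cup \partial X_1$. Fix a basic cone neighborhood $U$ of $\zeta_1$ determined by some truncation time $T$ and tolerance $\varepsilon$. For points on the two outer sub-segments, the radial geodesic from $x_1$ coincides with $[x_1, g_nx_1]$ or $[x_1, h_nx_1]$ on $[0, T]$ once $R_n > T$, so the uniform-on-compacta convergence immediately places these points in $U$ for all large $n$. For a point $y$ on the middle segment $[a_n, b_n]$, I would run a CAT(0) comparison in the thin triangle with vertices $x_1, y, \gamma(R_n)$ to show that $[x_1, y]$ tracks $\gamma$ within $O(1/n)$ on the interval $[0, R_n - O(1/n)]$; together with $d(x_1, y) \ge R_n - O(1/n)$, this places $y$ in $U$ for large $n$ as well. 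Since $\gamma_n$ also contains $g_nx_1 \to \zeta_1$, the Hausdorff limit equals $\{\zeta_1\}$.

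The only step requiring genuine care will be the CAT(0) comparison showing that the middle segment $[a_n, b_n]$ itself shadows $\gamma$ along its full length rather than merely at its endpoints; everything else is essentially soft and reduces to the definition of the cone topology plus convexity of the CAT(0) metric.
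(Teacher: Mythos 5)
Your proposal is correct and follows essentially the same route as the paper: both build $\gamma_n$ by descending $[x_1,g_nx_1]$ to a point at a controlled radius, bridging by a short geodesic to the corresponding point of $[x_1,h_nx_1]$, ascending to $h_nx_1$, lifting via $\widetilde f$, and using convexity of the CAT(0) metric to trap the whole path in a small cone neighborhood of $\zeta_1$. The only cosmetic difference is bookkeeping (your radii $R_n\to\infty$ with tolerance $1/n$ versus the paper's integer jump times with tolerance $1$ and indices $N_n\le m\le N_{n+1}$).
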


Suppose this claim holds.  Then choose
$\nu,\nu'\in\phi_1^{-1}(\zeta_1)$ and sequences $(g_n),(h_n)\subset G$
such that $g_nx\to\nu$ and $h_nx\to\nu'$ and $(g_nx_1)$ and
$(h_nx_1)$ both converge to $\zeta_1$.
Let $\widetilde\gamma_n$
be the paths prescribed by the claim.
By passing to a subsequence, we may assume this sequence converges
as a Hausdorff limit to $K\subset \widetilde X_1\cup\Lambda$.
Since $\widetilde f$ is a QIE, the sequence of paths $\widetilde\gamma_n$
eventually leave every compact set, guaranteeing that
$K\subset\Lambda$.  Certainly $K$ contains
$\nu$ and $\nu'$.  Furthermore, 
every point of $K$ can be written as a limit point of a sequence of points
$(\widetilde y_n)$ where $\widetilde y_n\in\widetilde\gamma_n$.
By construction, the image of this sequence in $X_1$ gives a sequence
converging to $\zeta_1$.  It follows that $\phi_1(K)=\{\zeta_1\}$.
Therefore for every pair of points in $\phi_1^{-1}(\zeta_1)$,
we have found a connected subset of $\phi_1^{-1}(\zeta_1)$ containing
both.  Therefore $\phi_1^{-1}(\zeta_1)$ is connected.

We now prove the claim.  Consider the geodesics $\alpha_m=[x_1,g_mx_1]$,
$\beta_m=[x_1,h_mx_1]$ both parameterized to have unit speed
and let $\rho$ be the ray based
at $x_1$ going out to $\zeta_1$.
Choose a neighborhood basis $\{U_n\}$ of $\zeta_1$ in $X_1\cup\partial X_1$.
Given any $n\ge 0$, there is an $N_n\ge 0$ such that whenever $m\ge N_n$,
$d(\alpha_m(n),\rho(n))$ and $d(\beta_m(n),\rho(n))$ are both less
than 1  and the ball of radius 2 centered at $\rho(n)$ lies
in $U_n$.

For $N_n\le m\le N_{n+1}$, we choose
$\gamma_m$ to be the path from
$g_mx_1$ which follows $\alpha_m$ back down to $\alpha_m(n)$,
jumps over to $\beta_m(n)$ by a path of length $\le 2$, and heads
back up $\beta_m$, ending at $h_mx_1$.
Convexity of the metric guarantees that
$\gamma_n\subset U_n$.  It is easy to verify that
$\widetilde\gamma_n=\widetilde f(\gamma_n)$ satisfies the claim.

\begin{figure}
\begin{tikzpicture}[scale=2]
	\path (0,0)    coordinate (A);
	\path (2,0)    coordinate (B);
	\path (3,1)    coordinate (C);
	\path (5,5/3)  coordinate (D);
	\path (3,-1)   coordinate (E);
	\path (5,-5/3) coordinate (F);
	\path (6,0)    coordinate (G) node[right] {$\zeta_1$};

	\draw [dashed] (A) -- (B);
	\draw [dashed] (A) -- (C);
	\draw [dashed] (A) -- (E);
	\begin{scope}[very thick, every node/.style={sloped,allow upside down}]
		\draw (D)
			-- node {\midarrow} (C)
			-- node {\midarrow} (B)
			-- node {\midarrow} (E)
			-- node {\midarrow} (F);
	\end{scope}
	\draw [dashed,arrows={-{triangle 90}}] (B) -- (G);

	\fill (A) circle (1pt) node[left] {$x_1$};
	\fill (B) circle (1pt) node[above left] {$\rho(n)$};
	\fill (C) circle (1pt) node[above left] {$\alpha_m(n)$};
	\fill (D) circle (1pt) node[right] {$g_mx_1$};
	\fill (E) circle (1pt) node[below left] {$\beta_m(n)$};
	\fill (F) circle (1pt) node[right] {$h_mx_1$};
\end{tikzpicture}
\end{figure}

In closing, we observe that by replacing the
paths $\widetilde\gamma_n$ with $k$-chains in $Gx$, we get an argument
which does not require the group to be torsion-free.
\end{proof}

\section{Croke-Kleiner Admissible Groups}
\label{CKA groups}
Recall that a geodesic space is called $\delta$-hyperbolic if given any
triangle (possibly with ideal vertices) then each side
lies in the $\delta$-neighborhood of the union of the other two sides.

In the language of \cite{Croke2002}, a graph of groups $\mathcal{G}$
is called \textit{admissible} if it satisfies all of the following:
\begin{enumerate}
\item $\mathcal{G}$ is a finite graph with at least one edge.\\
\item Each vertex group $\overline{G_v}$ has center
$Z(\overline{G_v})\cong\Z$,
$\overline{H_v}=\overline{G_v}/Z(\overline{G_v})$ is nonelementary
hyperbolic, and every edge group $\overline{G_e}$ is isomorphic
to $\Z^2$.\\
\item Let $e_1$ and $e_2$ be distinct directed edges entering
a vertex $v$, and for $i=1,2$ let $K_i\subset\overline{G_v}$ be
the image of the edge homomorphism
$\overline{G_{e_i}}\to\overline{G_{v}}$.  Then for every
$g\in\overline{G_{v}}$, $gK_1g^{-1}$ is not commensurable
with $K_2$, and for every $g\in\overline{G_v}\setminus K_i$,
$gK_ig^{-1}$ is not commensurable with $K_i$.\\
\item For every edge group $\overline{G_e}$, if
$\alpha_i:\overline{G_e}\to\overline{G_{v_i}}$ are the edge
monomorphisms, then the subgroup generated by
$\alpha_1^{-1}(Z(\overline{G_{v_1}}))$ and
$\alpha_2^{-1}(Z(\overline{G_{v_2}}))$ have finite index in
$\overline{G_e}\cong\Z^2$.
\end{enumerate}
The fundamental group of such a graph of groups will
be called \textit{Croke-Kleiner Admissible (CKA)}.

In addition to the non-rigid examples discussed in the beginning,
the family of CKA groups also includes many
other examples, for which the rigidity question is not known, and
the topology of the boundaries is not well-understood.
Some of these may even have locally connected boundaries.

\subsection{Decompositions of CKA Spaces}
Let $G$ be a CKA group acting geometrically on a proper CAT(0) space $X$.
As shown in \cite[Section 3.2]{Croke2002}, $X$ admits a decomposition corresponding
to the decomposition of $G$ as a graph of groups.
Let $T$ be the Bass-Serre tree for the underlying graph of groups.
Given a simplex $\sigma$ of $T$, its stabilizing subgroup
is denoted by by $G_\sigma$ (these are isomorphic copies of
the groups $\overline{G_\sigma}$ coming from the graph of groups.)
For every vertex $v\in T$, $Z(G_v)$ is infinite cyclic
and $H_v=G_v/Z(G_v)$ is a nonelementary hyperbolic group.

Recall that the \textit{minset} of an isometry $i$ of a CAT(0) space $X$
is the set of points $x\in X$ such that $d(x,ix)$ is minimal.
For a group $\Gamma$ of isometries, the minset of $\Gamma$
is the intersection of the minsets of its elements.
This is a closed convex subspace of $X$.

For vertices $v$ of $T$,
let $Y_v$ denote the minset in $X$ of $Z(G_v)$.
The family $\{Y_v\}$ is clearly periodic.  Choose also a periodic
family $\{Y_e\}$ of $G_e$-invariant 2-flats ($e$ ranging over
the edges of $T$).  Both familes $\{Y_e\}$ and $\{Y_v\}$ are locally finite
by \cite[Lemma 3.10]{Croke2002}.

The following Lemma summarizes the results of \cite[Section 3.2]{Croke2002}
which are relevant here.

\begin{lemma}
\label{le-CKA decomp}
There is a periodic family of closed, convex subspaces $\{X_\sigma\}_{\sigma\in T}$
and a $K>0$ satisfying the following properties: \\
\begin{enumerate}
\item Both families $\{X_e\}$ and $\{X_v\}$ (and the families consisting of their interiors) cover $X$. \\
\item For every simplex $\sigma$ of $T$, $X_\sigma$ is $G_\sigma$-invariant with compact quotient. \\
\item For every simplex $\sigma$ of $T$, $Y_\sigma\subset X_\sigma\subset N_K(Y_\sigma)$. \\
\item For every vertex $v$ of $T$, $Y_v$ splits as $\overline{Y_v}\times\R$ where $Z(G_v)$ acts only in the
	$\R$-coordinate and $H_v$ acts geometrically on $\overline{Y_v}$.  In particular,
	$\overline{Y_v}$ is $\delta$-hyperbolic. \\
\item Whenever an edge $e$ separates a pair of vertices $u$ and $v$ of $T$,
	any path $\alpha$ from a point of $X_u$ to a point of $X_v$ must pass through
	$X_e$. \\
\end{enumerate}
\end{lemma}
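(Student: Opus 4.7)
The plan is to build the $X_\sigma$ as controlled thickenings of the minsets $Y_\sigma$ and verify the five properties in an order that lets the earlier ones feed the later ones. First I would dispose of property (4), which is the cornerstone. Since $Z(G_v)\cong\Z$ is central in $G_v$, any generator acts as a hyperbolic semi-simple isometry whose minset is $Y_v$. By the classical splitting theorem for minsets of a $\Z$-action on a CAT(0) space (Bridson--Haefliger II.6.8 in \cite{Bridson1999}), $Y_v$ splits isometrically as $\overline{Y_v}\times\R$ with $Z(G_v)$ acting purely by translation on the $\R$-factor. Because $Z(G_v)$ is central, the full group $G_v$ preserves this splitting, so $H_v = G_v/Z(G_v)$ inherits a geometric action on $\overline{Y_v}$; the \v{S}varc--Milnor lemma and the admissibility hypothesis that $H_v$ is hyperbolic then give that $\overline{Y_v}$ is $\delta$-hyperbolic.

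Next I would pick the periodic family $\{Y_e\}$ of $G_e$-invariant $2$-flats using the standard fact that $\Z^2$ acting by semi-simple isometries on a CAT(0) space stabilizes such a flat inside its minset; the admissibility condition that $\alpha_1^{-1}(Z(\overline{G_{v_1}}))$ and $\alpha_2^{-1}(Z(\overline{G_{v_2}}))$ together have finite index in $\overline{G_e}$ is exactly what lets $Y_e$ be chosen to sit coherently inside the two adjacent $Y_v$'s. With all minsets in hand, I would define $X_\sigma$ to be a carefully cut-off version of $N_K(Y_\sigma)$ for a common constant $K>0$. Properties (2) and (3) are then essentially automatic: closed convex thickenings of $G_\sigma$-invariant closed convex sets are $G_\sigma$-invariant and closed convex, and $G_\sigma$ acts cocompactly on $Y_\sigma$ by the standard theory of minsets under cocompact actions. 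For property (1), the requirement that $\{X_e\}$ and $\{X_v\}$ cover $X$ reduces to showing that every $x \in X$ lies within distance $K$ of some $Y_\sigma$, which follows from cocompactness of the $G$-action, the local finiteness of $\{Y_\sigma\}$ from \cite[Lemma 3.10]{Croke2002}, and a simple diameter argument in $X/G$; enlarging $K$ uniformly yields the interior-covering version as well.

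The main obstacle will be property (5), the separation property, because a naive thickening $N_K(Y_v)$ could bulge so far beyond $Y_e$ that a path from $X_u$ to $X_v$ might sneak past $X_e$. To rule this out I would use the Bass--Serre machinery to produce a $G$-equivariant continuous map $\pi \colon X \to T$ such that the preimage of the open edge interior of $e$ is contained in $X_e$. Once such a $\pi$ is in hand, any path in $X$ from $X_u$ to $X_v$ projects to a path in $T$ from $u$ to $v$, which must cross $e$, forcing the original path to pass through $X_e$. Constructing $\pi$ is where the admissibility conditions pay off: condition (3), which forbids the images of distinct edge groups at a common vertex from having commensurable conjugates, is exactly what prevents two adjacent $Y_v$'s from overlapping outside their shared $Y_e$, and condition (4) ensures the $Y_e$'s sit inside the $Y_v$'s with the right codimension. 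These constraints let one average local closest-point projections to the various $Y_\sigma$'s, weighted by distance, into a single well-defined equivariant map $X\to T$ with the desired preimage structure; this is the content of \cite[Section 3.2]{Croke2002}, which I would then cite rather than reprove in detail.
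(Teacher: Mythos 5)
The paper gives no proof of this lemma at all---it is stated explicitly as a summary of the results of \cite[Section 3.2]{Croke2002}---and your proposal, which reconstructs the standard ingredients (the minset splitting theorem of \cite{Bridson1999} for property (4), the Flat Torus Theorem for the edge flats, convex thickenings for (2)--(3)) and then defers the genuinely delicate separation property (5) to that same citation, is essentially the same approach. Two caveats, neither fatal since you ultimately cite rather than reprove \cite{Croke2002}: the cocompactness of the $G_\sigma$-action on $Y_\sigma$ is not mere ``standard theory of minsets under cocompact actions'' but requires the centralizer of $Z(G_v)$ in $G$ to be commensurable with $G_v$, which is exactly where the admissibility conditions enter; and your ``averaged closest-point projection'' map $X\to T$ is your own guess at the mechanism behind (5), not an accurate description of what Croke--Kleiner actually do, so it should be presented as speculation rather than as ``the content of'' their Section 3.2.
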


The spaces $X_\sigma$ are called \textit{vertex} or \textit{edge spaces} depending
on whether $\sigma$ is a vertex or an edge.

\subsection{Boundaries of CKA Groups}
\label{subsec:CKA-boundaries}
A large part of a boundary of a CKA group is just the union
of boundaries of vertex spaces.  If $v$ is a vertex of $T$, then parts
(3) and (4) of Lemma \ref{le-CKA decomp} tell us that $\partial X_v$
decomposes as the suspension of $\partial\overline{Y_v}$.
The suspension points are the endpoints of the $\R$-factor.
We will refer to the suspenion points as \textit{poles} and the suspension
arcs as \textit{longitudes}.

Points of $\bigcup_{v\in T^0}\partial X_v$ are called \textit{rational},
and points in the complement are called \textit{irrational.}
Denote the former set by $RX$ and the latter by $IX$.
$IX$ is very easy to understand -- components are either singletons
or intervals \cite[Proposition~7.3]{Croke2002}.

This next result tells us that a geodesic
ray determines an irrational point iff it eventually stays
far away from every vertex space (thus it is safe to refer to
geodesic rays themselves as irrational and rational).  As discussed in
\cite{Croke2002}, there is a
$G$-equivariant coarse Lipschitz map $\rho:X\to T^0$,
and the vertex and edge spaces may be chosen so that
the following holds \cite[Lemmas~3.19 and 3.22]{Croke2002}.

\begin{lemma}
Let $\gamma$ be a geodesic ray determining $\zeta\in\partial X$.
Then exactly one of the following is true:
\begin{enumerate}
\item $\rho\circ\gamma$ is unbounded and its image lies in
a uniform neighborhood of a unique geodesic ray, $\tau$,
in $T$ starting at $\rho(\gamma(0))$.  The geodesic $\gamma$
intersects $X_e$ for all but finitely many edges $e$ of $\tau$.
In this case, $\zeta\in IX$.  Furthermore, whenever $\gamma'$
is an asymptotic ray $\rho\circ\gamma'$ is also in a tubular
neighborhood of the same $\tau$. \\
\item $\rho\circ\gamma$ is bounded, and $\gamma$ eventually
stays inside $X_v$ for some vertex $v$.  In this case, there is
a subcomplex $T_\gamma\subset T$ defined by the property that
for each simplex $\sigma$ of $T$, $\sigma$ is in $T_\gamma$
if and only if $\gamma$ is asymptotic to a ray in $X_\sigma$.
The possibilities for $T_\gamma$ are:
\begin{enumerate}
\item a single vertex $v$ (in which case $\zeta\in\partial X_v$
	is not in the boundary of any edge space).\\
\item an edge $e$ (in which case $\zeta\in\partial X_e$).\\
\item the closed star at a vertex $v$ (in which case
	$\zeta$ is one of the suspension points of $\partial X_v$).\\
\end{enumerate}
\end{enumerate}
\end{lemma}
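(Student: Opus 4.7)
The plan is to split on the dichotomy of whether the coarse Lipschitz, $G$-equivariant composition $\rho\circ\gamma:[0,\infty)\to T^0$ is bounded, and then in each case exploit the structure provided by Lemma \ref{le-CKA decomp}.

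\emph{Unbounded case.} First I would verify that $\rho\circ\gamma$ is a quasi-geodesic in $T$: the upper bound comes from $\rho$ being coarse Lipschitz, and a lower bound follows from property (5) of Lemma \ref{le-CKA decomp}, which forces $\gamma$ to spend definite length crossing an edge space $X_e$ every time it moves between the vertex spaces it separates. Since $T$ is a tree (hence $0$-hyperbolic) and any quasi-geodesic ray in a tree fellow-travels a unique geodesic ray, we obtain the desired $\tau$ inside a uniform neighborhood. For each edge $e$ of $\tau$ past some initial segment, $\gamma$ must intersect $X_e$ again by property (5). To show $\zeta\in IX$: if instead $\zeta\in\partial X_v$, then $\gamma$ would be asymptotic to a ray contained in the $G_v$-cocompact set $X_v$, forcing $\rho\circ\gamma$ to be bounded---a contradiction. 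The claim about an asymptotic ray $\gamma'$ tracking the same $\tau$ follows because convexity of the metric keeps $d(\gamma(t),\gamma'(t))$ bounded, so $\rho\circ\gamma'$ and $\rho\circ\gamma$ sit at bounded Hausdorff distance from each other.

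\emph{Bounded case.} If $\rho\circ\gamma$ is bounded, then $\gamma$ lies in a uniform neighborhood of a finite union of vertex and edge spaces. The first substantive task is to show that $\gamma$ eventually enters and remains in a single vertex space $X_v$. I would combine local finiteness of $\{X_\sigma\}$, closed convexity, and the observation that if $\gamma$ oscillated between adjacent $X_u$ and $X_v$ forever, then by property (5) it would cross the separating $X_e$ infinitely often; CAT(0) convexity together with the fact that $X_e$ is within bounded Hausdorff distance of the $2$-flat $Y_e$ would then pin the tail of $\gamma$ asymptotically into (a neighborhood of) $Y_e$, landing it inside either $X_u$ or $X_v$ from then on. With the tail inside some $X_v$, I would analyze $\partial X_v\cong\Sigma\,\partial\overline{Y_v}$ using property (4): $\zeta$ is specified by a horizontal direction in $\overline{Y_v}$ together with a slope in $[-\infty,\infty]$. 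If $\zeta$ is a pole, then admissibility condition (4) guarantees every edge flat at $v$ contains a line parallel to the $Z(G_v)$-axis, so $\zeta\in\partial X_e$ for every edge $e$ incident to $v$ and hence for every neighboring $X_{v'}$: this is case (c). If $\zeta$ is not a pole, then its horizontal direction either lies in $\partial\overline{Y_e}$ for some edge $e$ at $v$---uniquely so, by the non-commensurability in condition (3)---yielding case (b), or it does not, yielding case (a).

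\emph{Main obstacle.} The delicate step is the bounded case: rigorously excluding perpetual oscillation of $\gamma$ between distinct vertex spaces, and then correctly matching $\zeta$ to one of the three subcases. Uniqueness of the edge $e$ in case (b) and the dichotomy between cases (a)/(b) and case (c) both rest on admissibility conditions (3) and (4), which ensure that two distinct edge flats meeting at $v$ cannot share any asymptotic direction other than the $Z(G_v)$-axis. The unbounded case, by contrast, reduces cleanly to quasi-geodesic stability in the tree $T$.
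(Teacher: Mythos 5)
You should first know that the paper does not prove this lemma at all: it is imported verbatim from Croke--Kleiner, cited as \cite[Lemmas~3.19 and 3.22]{Croke2002}, so your attempt has to be judged on its own merits. Your overall architecture (split on boundedness of $\rho\circ\gamma$; use convexity, local finiteness, and property (5) in the bounded case; classify $T_\gamma$ via admissibility conditions (3) and (4)) does track the Croke--Kleiner argument, and your classification of the three subcases is essentially right.

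The genuine gap is in the unbounded case: the claim that $\rho\circ\gamma$ is a quasi-geodesic in $T$ is false in general, and your justification for it does not produce the needed inequality. A quasi-geodesic requires the \emph{lower} bound $d_T(\rho\gamma(s),\rho\gamma(t))\ge\frac{1}{\lambda}|s-t|-\varepsilon$, but your argument (``$\gamma$ must spend definite length crossing an edge space each time it changes vertex spaces'') says that $n$ tree-steps cost at least $cn$ in time, i.e.\ $d_T\le\frac{1}{c}|s-t|$ --- an upper bound, the same direction as the coarse Lipschitz bound. The lower bound genuinely fails: vertex spaces are unbounded (quasi-isometric to the infinite groups $G_v$), and nothing bounds the time a geodesic with unbounded projection spends inside the $n$-th vertex space of its itinerary; these transit times typically grow without bound along an irrational ray. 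So the dichotomy is not ``bounded versus quasi-geodesic'' --- there is a middle regime of rays with unbounded projection but ever-longer lingering times, and for exactly those (typical) rays quasi-geodesic stability is unavailable. The correct replacement is convexity rather than stability: since each $X_\sigma$ is closed and convex, a CAT(0) geodesic that leaves $X_\sigma$ can never re-enter it, so the set of times $\gamma$ spends in each $X_\sigma$ is an interval; ordering these intervals shows the itinerary of $\gamma$ is an \emph{embedded} edge-path in $T$, which in a tree is already a geodesic, and $\rho\circ\gamma$ lies in a uniform neighborhood of it because $\rho$ is equivariant and each $G_v$ acts cocompactly on $X_v$ (so $\rho(X_v)$ stays near $v$). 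This monotonicity also cleans up your bounded case: a bounded embedded path in a tree is finite, so finitely many intervals cover $[0,\infty)$ and the last one is unbounded, placing the tail of $\gamma$ in a single $X_v$ with no need for the oscillation argument you sketch (which, as written, has $\gamma$ exiting and re-entering convex sets --- something that cannot happen in the first place).
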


Part (1) says that if $\gamma$ and $\gamma'$
are asymptotic irrational geodesic rays, then their image in $T$
completely determines which vertex spaces and edge spaces they
pass through.  Specifically, if $e_1,e_2,...$ is the sequence
of edges in the ray $\tau$, then both $\gamma$ and $\gamma'$
must pass through $X_{e_i}$ for all but finitely many $i$.
Even further, if we put this together with
Lemma \ref{le-CKA decomp}(5) we see that if $v_i$ is
the vertex shared by $e_i$ and $e_{i+1}$, then $\gamma$
must pass through $X_v$, and when it leaves, it does so at a
point of $X_{e_{i+1}}$.

\subsection{Slopes of Elements in Vertex Groups}
Recall that if $g$ is a hyperbolic isometry of a CAT(0) space $X$, then
the limit at infinity of the sequence $g^nx$ (or $g^{-n}x$ does not depend on $x$ and is
typically denoted by $g^{\infty}$ (or $g^{-\infty}$).

In \cite[Section~5]{Guilbault}, we make sense of ``slopes'' of geodesics in spaces
which split and groups acting on such spaces.  Suppose a group $G=H\times\Z$ acts
geometrically on a CAT(0) space $X$.  By the Flat Torus Theorem \cite{Bridson1999}[Theorem~II.7.1],
we may assume (by passing
to the minset if necessary) that $X$ admits a corresponding splitting of the form
$Y\times\R$.  Given a geodesic $\gamma\subset Y\times\R$, the slope is defined as
its slope in the flat strip containing it.
If a generator $c\in\Z$ has been chosen,
then this is done so that a geodesic ray of slope $\infty$ determines
$c^{\infty}$ and a ray of slope $-\infty$ determines
$c^{-\infty}$.
The slope $\slope(g)$ of an element
$g\in G$ may then be defined as the slope the line segment $[x_0,gx_0]$.  This function
extends continuously to infinity, where it does not depend on the choice of $x_0$
\cite[Lemma~5.3]{Guilbault}.

Computing slopes allows us to get a handle on limit sets.  In this paper we will need
\cite[Proposition~5.4]{Guilbault} which in the present context says

\begin{lemma}
Let $G=H\times\Z$ be a group acting geometrically on a CAT(0) space $X$ and $\slope$
be a corresponding slope function $G\to[-\infty,\infty]$.  Then $\slope$ is
asymptotically bounded on $H$.
\end{lemma}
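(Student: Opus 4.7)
The plan is to exploit the product structure $X=Y\times\R$ provided by the Flat Torus Theorem (to which we have passed via the minset) and to show that slopes of elements of $H$ are bounded because the $Y$-displacement of $h$ dominates its $\R$-displacement. First I would establish that every $h\in H$ acts on $Y\times\R$ as a product isometry. Since $h$ commutes with the central $\R$-translations coming from $\Z$, writing $h(y,t)=(\phi(y,t),\chi(y,t))$ and using the commutation relation together with the fact that the splitting is isometric forces $h(y,t)=(\psi_h(y),t+c(h))$ for some isometry $\psi_h\in\mathrm{Isom}(Y)$ and some $c(h)\in\R$. Composition then makes $c\colon H\to\R$ a group homomorphism and $\psi\colon H\to\mathrm{Isom}(Y)$ a group action.

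With this in hand, the slope of $[x_0,hx_0]$ for a basepoint $x_0=(y_0,t_0)$ is the ratio
\[
    \slope(h)=\frac{c(h)}{d_Y(y_0,\psi_h(y_0))}
\]
of vertical displacement over horizontal displacement. Bounding $|\slope(h)|$ for large $|h|_H$ therefore reduces to a linear comparison between these two quantities. The numerator is easy: since $c\colon H\to\R$ is a homomorphism it is Lipschitz with respect to any word metric, so $|c(h)|\le M|h|_H$ for some constant $M$. For the denominator, I would argue that $\psi$ makes $H$ act geometrically on $Y$, which via the \v{S}varc--Milnor Lemma gives a linear lower bound $d_Y(y_0,\psi_h(y_0))\ge(1/\lambda)|h|_H-\epsilon$ for some QI constants $\lambda,\epsilon$.

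The step I expect to be the main obstacle is showing that this induced $H$-action on $Y$ is in fact geometric. Cocompactness is automatic from the cocompactness of $G$ on $Y\times\R$ together with the fact that $\Z$ already acts cocompactly in the $\R$-direction. Properness reduces to showing that $\ker\psi$ is finite. A nontrivial $h\in\ker\psi$ with $c(h)\ne 0$ must be of infinite order (since $h^n=1$ in $H$ would force $nc(h)=0$) and acts on $X$ as pure translation by $c(h)$ in the $\R$-direction; together with the generator $z\in\Z$ it generates a subgroup of $\R$-translations in two independent directions. Proper discontinuity of $G$ on $X$ forces this subgroup of $\R$ to be discrete, hence cyclic, so some nontrivial word $h^n z^m$ acts trivially on $X$. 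Since $h$ and $z$ are independent infinite-order elements of the direct product $H\times\Z$, such a word has infinite order in $G$, contradicting finiteness of the ineffective kernel of the $G$-action. Therefore $\ker\psi$ consists only of elements with $c=0$, and those act trivially on all of $X$, so $\ker\psi$ is contained in the finite ineffective kernel.

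Combining these bounds yields
\[
    |\slope(h)|\le\frac{M|h|_H}{(1/\lambda)|h|_H-\epsilon}\longrightarrow M\lambda
\]
as $|h|_H\to\infty$, which is the desired asymptotic boundedness of $\slope$ on $H$. The calculation itself is elementary Pythagorean bookkeeping in the product $Y\times\R$; essentially all of the content is concentrated in the product-isometry decomposition of elements of $H$ and in the kernel argument that upgrades the obvious cocompactness to a geometric action.
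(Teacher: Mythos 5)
First, a framing note: this paper never proves the statement itself --- it is imported verbatim as Proposition~5.4 of \cite{Guilbault} --- so your argument can only be judged on its own merits. Your architecture is the natural one and most of it is sound: passing to $\Min$ of the central generator to get $X=Y\times\R$, decomposing each $h\in H$ as $h(y,t)=(\psi_h(y),t+c(h))$ with $c:H\to\R$ a homomorphism, the formula $\slope(h)=c(h)/d_Y(y_0,\psi_h(y_0))$, the Lipschitz upper bound $|c(h)|\le M|h|_H$, and the final ratio computation are all correct, as is your observation that cocompactness of $\psi(H)$ on $Y$ follows from cocompactness of $G$ on $Y\times\R$.

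The genuine gap is the sentence ``Properness reduces to showing that $\ker\psi$ is finite.'' That reduction is false: a finite (even trivial) kernel does not make an isometric action proper --- consider an irrational rotation of the circle, or $\Z^2$ acting on $\R$ by $t\mapsto t+m+n\sqrt{2}$. Properness requires that for each compact $K\subset Y$ only finitely many $h$ satisfy $\psi_h(K)\cap K\neq\emptyset$, and the dangerous elements are not those with $\psi_h=\id$ but those with $d_Y(y_0,\psi_h(y_0))$ bounded while $|c(h)|\to\infty$. Such a sequence is perfectly compatible with properness of $G$ on $Y\times\R$ (the points $h_nx_0$ escape to infinity vertically), yet it destroys both properness of the $\psi$-action and the lemma itself, since then $\slope(h_n)\to\pm\infty$. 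So the phenomenon your kernel computation does not address is exactly the phenomenon the lemma exists to rule out; at its crucial point the argument assumes what is to be proved. The missing idea is a drift-correction using the central factor: if $h_n\in H$ are distinct with $\psi_{h_n}(K)\cap K\neq\emptyset$, let $\tau=c(z)>0$, $k_n=\lfloor c(h_n)/\tau\rfloor$, and $g_n=h_nz^{-k_n}$. These are distinct elements of $G$ (by the direct-product structure), they have the same $Y$-displacement as $h_n$, and their vertical displacement lies in $[0,\tau)$; hence every $g_n$ moves the compact set $K\times[0,\tau]$ so as to meet itself, contradicting properness of $G$ on $Y\times\R$. With properness established this way, \v{S}varc--Milnor applies and the rest of your proof goes through. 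Two smaller inaccuracies in the kernel discussion: an element of $\ker\psi\cap\ker c$ acts trivially on $\Min(c)=Y\times\R$, not necessarily on all of $X$ (this is harmless --- properness of the restricted action already makes that subgroup finite); and ``translations in two independent directions'' is a misstatement, since $h\in\ker\psi$ and $z$ translate along the same $\R$-factor --- what your argument actually uses, correctly, is the discrete-versus-indiscrete dichotomy for the subgroup $\{mc(h)+n\tau\}\subset\R$.
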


We apply this same technique to the actions of the vertex group on their corresponding
vertex spaces to get

\begin{lemma}
\label{lemma:slope thingy}
Choose a vertex $v$ of $T$ and let $c_v$ be a generator of $Z(G_v)$.
Assume $G$ acts geometrically on two CAT(0) spaces $X$ and $X'$.  Given basepoints $x_0\in X$
and $x_0'\in X'$, denote the limit point of $c_v^nx_0$ by $\zeta\in\partial X$ and the limit
point of $c_v^{-n}x_0'$ by $\zeta'\in\partial X$.  Then there is no sequence $(g_n)\subset G_v$
such that $g_nx_0\to\zeta$ and $g_nx_0'\to\zeta'$.
\end{lemma}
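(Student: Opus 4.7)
My plan is to reduce the lemma to the observation that the two $\R$-translation functions coming from the actions of $G_v$ on $X$ and $X'$ differ by a group homomorphism that factors through $H_v$, and that any such homomorphism is automatically Lipschitz in the word metric. The hypothesized sequence $(g_n)$ would then violate this estimate. To set the stage, I relocate basepoints to $x_0 \in Y_v \subset X$ and $x_0' \in Y_v' \subset X'$. Every $g \in G_v$ commutes with $Z(G_v) = \langle c_v \rangle$, hence preserves both the minset $Y_v = \overline{Y_v} \times \R$ and its splitting, acting as $g(y,s) = (\bar g y,\, s + t(g))$. A reflection on the $\R$-factor is impossible here, since it would not commute with the nontrivial translation $c_v$; so $t\colon G_v \to \R$ is a genuine homomorphism with $t(c_v) = L > 0$ (the sign reflects that $\zeta$ is the limit of the \emph{positive} $c_v$-orbit). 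The analogous construction for $X'$ produces $t'\colon G_v \to \R$ with $t'(c_v) = L' > 0$, positive because $\zeta'$ is defined as the limit of the \emph{negative} $c_v$-orbit and is therefore the $-\R'$ pole.

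Next I would translate the hypotheses into slope language: $g_n x_0 \to \zeta$ is equivalent to the slope $t(g_n)/d_{\overline{Y_v}}(\bar x_0, \bar g_n \bar x_0)$ tending to $+\infty$, and $g_n x_0' \to \zeta'$ is equivalent to $t'(g_n)/d_{\overline{Y_v}'}(\bar x_0', \bar g_n \bar x_0') \to -\infty$. If the projections $\bar g_n$ remain bounded in $H_v$, passing to a subsequence makes $\bar g_n = \bar g$ constant, so $g_n = g_0 c_v^{k_n}$; then $t(g_n) \to +\infty$ forces $k_n \to +\infty$ while $t'(g_n) \to -\infty$ forces $k_n \to -\infty$, which is absurd. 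Hence I may assume $|\bar g_n|_{H_v} \to \infty$.

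For this main case, I would form the homomorphism $\psi := L' t - L t'\colon G_v \to \R$. Since $\psi(c_v) = 0$, it descends to $\bar\psi\colon H_v \to \R$, which is automatically Lipschitz with respect to any word metric on $H_v$. The \v{S}varc-Milnor lemma applied to the geometric action of $H_v$ on $\overline{Y_v}$ gives $|\bar g|_{H_v} \asymp d_{\overline{Y_v}}(\bar x_0, \bar g \bar x_0)$, and combining yields a constant $C$ with $|\psi(g_n)| \leq C \cdot d_{\overline{Y_v}}(\bar x_0, \bar g_n \bar x_0)$. On the other hand, for large $n$ both $L' t(g_n) > 0$ and $-L t'(g_n) > 0$, so $\psi(g_n) \geq L' t(g_n)$, and dividing through shows $\psi(g_n)/d_{\overline{Y_v}}(\bar x_0, \bar g_n \bar x_0) \geq L' \cdot \slope(g_n) \to +\infty$, contradicting the Lipschitz bound.

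The conceptual obstacle is precisely the one just resolved---recognizing that the two translation functions $t$ and $t'$, though built from unrelated CAT(0) actions of $G_v$, are forced to be proportional modulo a homomorphism factoring through $H_v$. The only other subtlety is ensuring that the action on the $\R$-factor is by translation rather than reflection, which is where the centrality of $c_v$ enters decisively.
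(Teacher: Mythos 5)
Your proof is correct, and it takes a genuinely different route from the paper's. Both arguments start identically --- pass to the minsets, use the splittings $Y_v=\overline{Y_v}\times\R$ and $Y'_v=\overline{Y'_v}\times\R$ with $c_v$ translating positively in each $\R$-factor, and phrase convergence to the poles in slope language --- but they diverge at the key step. The paper invokes the Flat Torus Theorem \cite[Theorem~II.7.1(5)]{Bridson1999} to produce a finite-index subgroup $\Gamma=H\times Z(G_v)$ of $G_v$, writes $g_n=h_nc^{k_n}$, and then applies the previously quoted lemma (\cite[Proposition~5.4]{Guilbault}: slope is asymptotically bounded on $H$) together with the slope addition formula to conclude that $k_n$ must be eventually positive (from the action on $X$) and eventually negative (from the action on $X'$), a contradiction. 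You instead exploit the fact that the vertical displacement functions $t,t'\colon G_v\to\R$ are honest homomorphisms (your reflection-exclusion argument via centrality of $c_v$ is exactly right), form the difference $\psi=L't-Lt'$, observe that it kills $Z(G_v)$ and hence descends to a word-Lipschitz homomorphism on $H_v$, and then play the \v{S}varc-Milnor linear upper bound against the super-linear growth forced by the sign conditions. What the paper's route buys is brevity: given the slope machinery already developed in \cite{Guilbault}, the proof is a few lines, and the ``slope bounded on the complement'' lemma gets reused elsewhere in the paper. What your route buys is self-containment and cleaner group theory: you never need the finite-index direct-product subgroup at all (the paper's ``we may assume $g_n=h_nc^{k_n}$'' tacitly requires first pushing $(g_n)$ into a single coset of $\Gamma$ by passing to a subsequence), and the Lipschitz-homomorphism mechanism that underlies the quoted slope lemma is inlined and applied once, to the single difference homomorphism, rather than separately in each space. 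Your explicit case split on whether $(\bar g_n)$ is bounded in $H_v$ is needed to make the division by horizontal displacement legitimate, and you handle both cases correctly.
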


\begin{proof}
As discussed above, we may assume that $X$ and $X'$ split as $Y\times\R$ and $Y'\times\R$
where $Y$ and $Y'$ admit geometric group actions by $H_v$.
Write the coordinates of $x_0$ and $x'_0$ in these splittings as $(y_0,0)$ and $(y_0',0)$ where
$\R$ is parameterized so that $c_v$ translates in both of the $\R$-factors in the positive
direction.

Suppose $(g_n)\subset G_v$ is a sequence such that $g_nx_0\to\zeta$ and $g_nx_0'\to\zeta'$.
By \cite[Theorem~II.7.1(5)]{Bridson1999}, there is a finite index subgroup $\Gamma$ of $G_v$
which contains $Z(G_v)$ as a direct factor, say $\Gamma=H\times Z(G_v)$.
Thus we may assume that $g_n=h_nc^{k_n}$ where $h_n\in H$
and $c$ is a generator for $\Z$ (chosen as a positive power of $c_v$).
Let $\slope$ and $\slope'$ be the slope functions for the action of $G_v$ on $X$ and $X'$
respectively.  Now,
\[
	\slope(g_n)=\slope(h_n)+\frac{k_nd(x_0,c_vx_0)}{d(y_0,h_ny_0)}.
\]
Since $\slope(g_n)\to\infty$ and $\slope(h_n)$ remains bounded, $k_n$ must eventually be positive.
On the other hand, $\slope'(g_n)\to-\infty$, and so the same argument tells us that $k_n$ must
eventually remain negative, giving us a contradiction.
\end{proof}

\section{Notes on $\delta$-Hyperbolic Spaces}
There are many statements about $\delta$-hyperbolic spaces of the form ``For every $C>0$,
there is a constant $R$ depending only on $\delta$ and $C$ such that whenever $x$ and $y$ are
points satisfying property $P(C)$, then $d(x,y)<R$''.  Thus $R$ is a coarse measure of closeness.
Since often times $C$ is also a coarse measure of closeness, these constants can pile up
resulting in messy calculations.  Because exact values are rarely important, we follow standard
practice by simply using the word \textit{near} or \textit{close}
or say that \textit{the distance is bounded} in a statement to mean that a distance is bounded
by a constant depending only on $\delta$, and possibly another constant which may arise in the statement.

When $\alpha$ and $\beta$ are geodesics for which $\alpha(t)$ and $\beta(t)$ are close for every
$t$, we say the geodesics \textit{track together.}
Here is a well-known fact.

\begin{lemma}[Bounded Tracking Property]
Let $\alpha$ and $\beta$ be a pair of geodesics parameterized to have constant speed
whose endpoints are close in a hyperbolic space.
Then $\alpha$ and $\beta$ track together.
\end{lemma}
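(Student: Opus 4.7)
The plan is the classical geodesic-quadrilateral argument for fellow travelers. Parameterize both geodesics over $[0,1]$, let $L_\alpha,L_\beta$ denote their lengths, and write $C$ for the assumed bound on $d(\alpha(i),\beta(i))$ at $i=0,1$. An immediate application of the triangle inequality gives $|L_\alpha-L_\beta|\le 2C$, so the two geodesics have comparable lengths; this will let us convert closeness of points to closeness of parameters.

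Next, I would form the geodesic quadrilateral with sides $\alpha,\beta,\gamma_0:=[\alpha(0),\beta(0)]$ and $\gamma_1:=[\alpha(1),\beta(1)]$, and cut it into two triangles by a diagonal. Applying $\delta$-thinness to each triangle shows that every point of $\alpha$ lies within $2\delta$ of $\beta\cup\gamma_0\cup\gamma_1$. For a parameter $t\in[0,1]$ with $tL_\alpha>C+2\delta$ and $(1-t)L_\alpha>C+2\delta$, the witnessing point cannot lie on either short side (each of length at most $C$), so we obtain $s\in[0,1]$ with $d(\alpha(t),\beta(s))\le 2\delta$.

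To deduce that $\alpha(t)$ is close to $\beta(t)$, and not merely to $\beta(s)$, I would match up the parameterizations. The triangle inequality applied along $\alpha$ and $\beta$, together with $d(\alpha(0),\beta(0))\le C$, yields $|sL_\beta-tL_\alpha|\le C+2\delta$; combining this with $|L_\alpha-L_\beta|\le 2C$ gives $|s-t|L_\beta\le 3C+2\delta$, hence $d(\beta(s),\beta(t))\le 3C+2\delta$ and therefore $d(\alpha(t),\beta(t))\le 4\delta+3C$. For parameter values close to $0$ or $1$, where the quadrilateral argument fails to isolate $\beta$ from the short sides, I would just bound $d(\alpha(t),\beta(t))$ directly via the triangle inequality through the nearby endpoint, using $|L_\alpha-L_\beta|\le 2C$ to keep the estimate symmetric.

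I do not anticipate any serious obstacle. The main content is the two-triangle subdivision of the quadrilateral, which is a completely standard $\delta$-hyperbolic move; the only nontrivial bookkeeping is the parameter-matching step, and it becomes routine once $|L_\alpha-L_\beta|$ has been controlled. The final tracking constant depends only on $\delta$ and $C$, as promised by the lemma's conclusion.
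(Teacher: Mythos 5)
Your proof is correct, but there is nothing in the paper to compare it against: the paper states the Bounded Tracking Property as a ``well-known fact'' and gives no proof at all (it reserves its expository proofs for the center-of-triangle lemmas that follow). Your argument is the standard one and it checks out in detail: the length comparison $|L_\alpha-L_\beta|\le 2C$ follows from two applications of the triangle inequality; subdividing the quadrilateral by a diagonal correctly converts the paper's slim-triangles condition into the statement that each point of $\alpha$ is $2\delta$-close to $\beta\cup\gamma_0\cup\gamma_1$; the distance conditions $tL_\alpha>C+2\delta$ and $(1-t)L_\alpha>C+2\delta$ do rule out the two short sides as witnesses; and the parameter-matching estimate $|sL_\beta-tL_\alpha|\le C+2\delta$, combined with the length comparison, legitimately yields $d(\alpha(t),\beta(t))\le 3C+4\delta$, with the endpoint regimes handled by a direct triangle-inequality bound. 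The resulting constant depends only on $\delta$ and $C$, which is exactly what the paper's coarse ``near/close'' convention requires, so your write-up would serve as a complete substitute for the citation-free assertion in the text.
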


When there is a $(\lambda,\epsilon)$-quasi-isometry or quasi-isometric embedding of another
space into the hyperbolic space, then we will assume that ``near'' also takes into account
the unlisted constants.  Recall that a \textit{quasi-geodesic} is a QIE of an interval.
These behave well in hyperbolic spaces \cite[Theorem~III.H.1.7]{Bridson1999}:

\begin{lemma}[Stability of Quasi-Geodesics]
A quasi-geodesic in a hyperbolic space tracks with any geodesic joining its endpoints.
\end{lemma}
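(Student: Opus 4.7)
The plan is to prove the standard Morse stability lemma in essentially the form of Bridson--Haefliger. First I would reduce to the case where the quasi-geodesic $\gamma \colon [a,b] \to X$ is continuous: replacing $\gamma$ with the concatenation of geodesic segments joining consecutive integer samples $\gamma(n), \gamma(n+1)$ produces a continuous $(\lambda', \epsilon')$-quasi-geodesic within bounded Hausdorff distance of the original, with $\lambda', \epsilon'$ depending only on $\lambda, \epsilon$. After this replacement, the problem reduces to comparing a continuous quasi-geodesic with the geodesic $[p,q]$ joining its endpoints.

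The heart of the argument is to show the one-sided containment $[p,q] \subset N_R(\gamma)$ for some $R = R(\delta, \lambda, \epsilon)$. Let $D = \max_{x \in [p,q]} d(x, \gamma)$ and let $x_0 \in [p,q]$ achieve this maximum. Pick $y, y' \in [p,q]$ at distance approximately $2D$ from $x_0$ on either side (or take the endpoints of $[p,q]$ if they lie closer); by maximality of $D$, both $y$ and $y'$ lie within $D$ of $\gamma$, say near $\gamma(s)$ and $\gamma(s')$. Concatenating the short geodesics $[y, \gamma(s)]$, $[\gamma(s'), y']$ with the continuous quasi-geodesic segment $\gamma|_{[s,s']}$ produces a path from $y$ to $y'$ whose total length is bounded by a linear function of $D$ with coefficients depending only on $\lambda$ and $\epsilon$. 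On the other hand, this path must avoid $B(x_0, D/2)$, and in a $\delta$-hyperbolic space any such detour has length exponential in $D$. Balancing these two estimates forces $D \leq R$ for some $R$ depending only on $\delta, \lambda, \epsilon$.

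Once $[p,q] \subset N_R(\gamma)$ is in hand, the reverse containment $\gamma \subset N_{R'}([p,q])$ follows easily: given $\gamma(t)$, choose the largest interval $[s, s']$ containing $t$ on which $\gamma$ fellow-travels with a single subarc of $[p,q]$, apply the previous containment at the endpoints to locate $\gamma(s), \gamma(s')$ near $[p,q]$, and observe that $\gamma|_{[s,s']}$ has bounded length by the quasi-isometric constants. Finally, to upgrade Hausdorff closeness to \emph{tracking} (closeness under constant-speed reparameterization), I would show that the nearest-point projection of $\gamma$ onto $[p,q]$ is monotone up to bounded back-and-forth, and then apply the Bounded Tracking Property to short subsegments of the two constant-speed parameterizations.

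The main obstacle is the quantitative lower bound on the length of a path in a $\delta$-hyperbolic space that detours around a ball. This is where exponential divergence of geodesics enters, and bookkeeping the three constants $\delta, \lambda, \epsilon$ through the exponential-versus-linear comparison is the delicate step; it is most cleanly handled by isolating the divergence estimate as a standalone lemma and plugging it into the balancing argument above.
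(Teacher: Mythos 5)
The paper offers no proof of this lemma at all: it is quoted as a known fact with a citation to \cite[Theorem~III.H.1.7]{Bridson1999}, and your sketch is precisely the standard Morse-lemma argument given there (taming to a continuous quasi-geodesic, the exponential-detour-versus-linear-length balancing to bound the Hausdorff distance, then upgrading to tracking). Your proposal is correct and follows essentially the same route as the paper's intended source, so there is nothing to reconcile.
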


A point $p$ in a hyperbolic space is called a \textit{center} for a triangle $\triangle xyz$
(possibly with ideal vertices) if it is close to each of the three sides.  It is well-known the
set of centers for a triangle is bounded.  Since the proof of this fact is in the flavor
of later proofs, we include it here.

Whenever $\triangle xyz$ is a triangle in a hyperbolic space (without ideal vertices),
there are points $\overline x\in[y,z]$, $\overline y\in[x,z]$, and $\overline z\in[x,y]$ such that
$d(x,\overline y)=d(x,\overline z)$, $d(y,\overline x)=d(y,\overline z)$,
and $d(z,\overline x)=d(z,\overline y)$.  These points are called the
\textit{internal points} of the triangle.  Internal points are close to eachother
and the geodesics joining them to the vertices of the triangle track together
\cite[Proposition III.H.1.17]{Bridson1999}.

\begin{lemma}
\label{lemma:ideal triangle center}
If $p$ and $q$ are a pair of points close to all three sides of a common triangle,
then $p$ and $q$ are close.
\end{lemma}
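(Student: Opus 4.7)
I would show that every point that is close to all three sides of the triangle lies within a bounded distance of the three internal points $\overline{x},\overline{y},\overline{z}$. Since those three internal points are already pairwise close (cited just before the statement), this immediately yields $d(p,q)\le\mathrm{const}$, where the constant depends only on $\delta$ and on the tolerance implicit in the word ``close''.

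For a triangle without ideal vertices, suppose $p$ is close to points $p_{1}\in[x,y]$, $p_{2}\in[y,z]$, $p_{3}\in[x,z]$. Each side is split by an internal point into two subsegments, and paired subsegments meeting at a common vertex track together. I would run a short case analysis on which of the two halves of each side contains each $p_{i}$. In the representative case $p_{1}\in[x,\overline{z}]$, the tracking companion $q_{1}\in[x,\overline{y}]$ satisfying $d(x,q_{1})=d(x,p_{1})$ is close to $p_{1}$, hence close to $p$; if $p_{3}\in[\overline{y},z]$, then $q_{1}$ and $p_{3}$ lie on opposite sides of $\overline{y}$ along the single geodesic $[x,z]$ and are both close to $p$, so both must be close to $\overline{y}$, forcing $p$ to be near $\overline{y}$. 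If instead $p_{3}\in[x,\overline{y}]$, I would bring in $p_{2}$: its tracking companion on either $[\overline{z},y]\subset[x,y]$ or $[\overline{y},z]\subset[x,z]$ lies on the opposite side of $\overline{z}$ (resp.\ $\overline{y}$) from $p_{1}$ (resp.\ $p_{3}$), and the same argument forces $p$ near $\overline{z}$ (resp.\ $\overline{y}$). The remaining symmetric cases are identical.

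For triangles with one or more ideal vertices, the internal-point construction extends: if, say, $z$ is ideal, then $\overline{x}\in[y,z)$ and $\overline{y}\in[x,z)$ are still defined by the equalities with the finite vertex $x$ or $y$, and the paired half-rays going out toward $z$ track together. The case analysis above is entirely about paired subsegments and the ``opposite-sides-of-a-point'' argument along a common geodesic, and goes through without change. As a safety net, one can approximate an ideal vertex by a sequence of interior points tending to infinity and pass to a limit, using that the relevant constants only depend on $\delta$.

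The main obstacle is not conceptual but organizational: keeping the case analysis and the accumulation of closeness constants straight, and verifying in the ideal case that the final bound still depends only on $\delta$ together with the fixed tolerance. Since each step introduces only a $\delta$-dependent slack, this bookkeeping is routine.
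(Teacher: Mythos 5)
Your proposal is correct and follows essentially the same route as the paper: in the vertex case, both arguments use the internal points, the tracking of paired subsegments toward a common vertex, a pigeonhole on which half of each side contains the point closest to $p$ (the paper phrases this as two of three signed parameters sharing a sign), and the observation that an internal point sandwiched between two points close to $p$ must itself be close to $p$. For ideal vertices, your ``safety net'' of pushing the ideal vertex to a far-out interior point and invoking Bounded Tracking is exactly the paper's truncation argument, so no genuinely different machinery is involved.
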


\begin{proof}
Suppose the triangle in question has no ideal vertices.
Choose unit speed parameterizations $\alpha$, $\beta$, and $\gamma$ for
the three sides of the triangle so that $\alpha(0)$, $\beta(0)$, and $\gamma(0)$ are the internal
points, and $\alpha(a)=\beta(-a)$, $\beta(b)=\gamma(-b)$, and $\gamma(c)=\alpha(-c)$ for positive
numbers $a$, $b$, and $c$.  Let $\alpha(r)$, $\beta(s)$, and $\gamma(t)$ be the points on the
respective sides of the triangle which are closest to $p$.  Then two of the numbers $r$, $s$, and $t$
must have the same sign.  Without loss of generality, assume $r$ and $s$ are both positive.
Since $\alpha(r)$ is close to $\beta(-r)$ and $\beta(s)$ and $\alpha(r)$ are both close $p$,
it follows that $\beta(-r)$ is close to $\beta(s)$.  Since $\beta(0)$ is between these two,
it must be close to $\beta(s)$ and hence to $p$.  Thus $p$ is close to all three internal points.
Since the same is true for $q$, $p$ and $q$ are close.

Now what if the triangle has some ideal vertices?  Suppose $x$ is an ideal vertex of $\triangle xyz$.
Let $\alpha$ and $\beta$ be the sides $[y,x]$ and $[z,x]$ parameterized to have unit speed and so that
$\alpha(t)$ and $\beta(t)$ go out to $x$ as $t\to\infty$.  Find points $\alpha(a)$ and $\beta(b)$ on these
sides close to $p$ and $\alpha(a')$ and $\beta(b')$ close to $q$.  Take $T$ larger than $a$, $b$, $a'$,
and $b'$.  By Bounded Tracking, $p$ and $q$ are also close to (any geodesics) $[\alpha(T),z]$ and
$[\alpha(T),y]$.  So we can replace $x$ with $\alpha(T)$, thereby removing an ideal vertex.  Continue
until all ideal vertices have been removed.
\end{proof}

\begin{lemma}
\label{lemma:point-to-line}
Let $\beta$ be a line and $p$ be a point.  Let $q\in\beta$ be a point
for which $d(p,q)$ is minimal.  Then every geodesic from $p$ to a point
$y$ near $\beta$ passes near $q$.
\end{lemma}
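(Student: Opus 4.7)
The plan is to reduce to the case $y\in\beta$ via Bounded Tracking, and then run a thin-triangle argument on $\triangle pqy$ whose punchline is that the internal point on $[p,q]$ is forced to be near $q$ by the minimality of $d(p,q)$.

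First, since $y$ is near $\beta$, choose $y'\in\beta$ close to $y$. The geodesics $[p,y]$ and $[p,y']$ share the endpoint $p$ and have other endpoints close, so by the Bounded Tracking Property they track together. Hence it suffices to prove the statement with $y'$ in place of $y$, i.e., to assume $y\in\beta$.

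So assume $y\in\beta$ (if $y=q$ there is nothing to prove). Consider the triangle $\triangle pqy$ whose side $[q,y]$ lies in $\beta$, and let $\overline{p}\in[q,y]$, $\overline{q}\in[p,y]$, $\overline{y}\in[p,q]$ be its internal points; these are mutually close. The key step is to bound $d(\overline{y},q)$. Because $\overline{p}\in\beta$ and $q$ realizes the minimum distance from $p$ to $\beta$, we have $d(p,q)\le d(p,\overline{p})\le d(p,\overline{y})+d(\overline{y},\overline{p})$. The second term is bounded by hyperbolicity, while $d(p,\overline{y})+d(\overline{y},q)=d(p,q)$ since $\overline{y}$ lies on $[p,q]$. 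Combining these forces $d(\overline{y},q)$ to be bounded. Then $\overline{q}\in[p,y]$, being close to $\overline{y}$, is close to $q$, so the geodesic $[p,y]$ passes near $q$, as desired.

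The main conceptual obstacle is identifying the right mechanism: minimality of $d(p,q)$ prevents $\overline{y}$ from drifting far from $q$, because otherwise the nearby point $\overline{p}\in\beta$ would give a strictly shorter approximate route from $p$ to $\beta$. Everything else is standard thin-triangle bookkeeping, subsumed into the ``near'' convention of this section; the only place a more substantive ingredient is invoked is in the initial reduction, which is handled cleanly by Bounded Tracking.
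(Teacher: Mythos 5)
Your proof is correct and follows essentially the same route as the paper's: reduce to $y\in\beta$ by Bounded Tracking, take the internal points of $\triangle pqy$, and use minimality of $d(p,q)$ together with the mutual closeness of internal points to force $\overline{y}$ (hence $\overline{q}\in[p,y]$) near $q$. The only difference is cosmetic—the paper phrases the key step as a contradiction ("if $\overline{p}$ were closer to $\overline{y}$ than $q$, it would be closer to $p$ than $q$"), whereas you run the same inequality directly.
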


\begin{proof}
By Bounded Tracking, we may assume $y$ lies on $\beta$.
Look at the triangle $\triangle yqp$ where $[y,q]$ is chosen as a subsegment of $\beta$.
Denote the internal points of this triangle by $\overline{p}\in[q,y]$,
$\overline{y}\in[q,p]$, and $\overline{q}\in[y,p]$.
If $\overline{p}$ were closer to $\overline{y}$ than $q$, then it would also be closer to
$p$ than $q$, for a contradiction.  This shows that $q$ is close to $\overline{y}$
and hence $\overline{q}$.
\end{proof}

\begin{lemma}
\label{lemma:signposts}
There is a constant $K$ such that the following statement holds:
Let $\alpha$ and $\beta$ be a pair of lines in a hyperbolic space neither of which passes through
the $K$-neighborhood of the other, and $p\in\alpha$ and $q\in\beta$ be a pair of points for
which $d(p,q)$ is minimized.  If $x$ is close to $\alpha$ and $y$ is close to $\beta$,
then any geodesic $[x,y]$ passes near both $p$ and $q$.
\end{lemma}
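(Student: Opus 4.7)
The plan is to reduce the lemma to a pair of applications of Lemma \ref{lemma:point-to-line}, together with a standard coarse projection lemma for $\delta$-hyperbolic spaces. First, using the Bounded Tracking Property, I would replace $x$ and $y$ by their nearest points on $\alpha$ and $\beta$, which only changes ``near'' by a bounded additive amount; so assume $x\in\alpha$ and $y\in\beta$. Let $p'\in\alpha$ be a point closest to $y$ and $q'\in\beta$ a point closest to $x$. By Lemma \ref{lemma:point-to-line} applied to the line $\alpha$ and the point $y$, the geodesic $[x,y]$ passes near $p'$ (since $x$ lies on $\alpha$); applied to the line $\beta$ and the point $x$, the same geodesic passes near $q'$.

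What remains, and is the main content of the lemma, is the identification $p'\approx p$ and $q'\approx q$ up to a constant depending only on $\delta$. This is a coarse projection lemma: once $K$ is chosen sufficiently large relative to $\delta$, the closest-point projection from $\beta$ to $\alpha$ has image of bounded diameter centered at $p$, and symmetrically from $\alpha$ to $\beta$. To prove it I would argue by contradiction. If $p'$ were far from $p$, the quadrilateral with vertices $p,q,y,p'$ has two long ``near-perpendicular'' sides $[p,q]$ and $[p',y]$, each of length at least $K$. Decomposing this quadrilateral along a diagonal into two triangles and applying $\delta$-slimness to each, together with Lemma \ref{lemma:point-to-line} to control where the diagonal passes, forces $[p,q]$ and $[p',y]$ to lie within a bounded distance of each other along most of their length. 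Provided $K$ exceeds a suitable function of $\delta$, this contradicts the minimality $d(p,q)=d(\alpha,\beta)$.

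The principal obstacle is precisely this projection lemma, and it is where the hypothesis $d(\alpha,\beta)\geq K$ is used: without a lower bound on the separation of the two lines, they can fellow-travel for arbitrarily long intervals (the Euclidean model of parallel lines at distance $\delta$ is the cautionary picture), and the closest-point projection from one to the other can then have arbitrarily large image, decoupling $p'$ from $p$. Once the projection lemma is in place, the two applications of Point-to-Line combine immediately: $[x,y]$ passes near $p'$ and near $q'$, hence near $p$ and $q$, as required. An equally valid alternative would be to package these same ingredients by showing that the piecewise geodesic $\sigma=[x,p]\cup[p,q]\cup[q,y]$ is a uniform quasi-geodesic and invoking the Stability of Quasi-Geodesics, but the projection lemma remains the essential step either way.
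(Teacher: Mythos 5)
Your proposal is correct, but it follows a genuinely different route from the paper's. After the same Bounded Tracking reduction, the paper never introduces the projections $p'$ and $q'$: it draws the single auxiliary geodesic $[p,y]$, applies Lemma \ref{lemma:point-to-line} once (line $\beta$, point $p$), so that $[p,y]$ passes near $q$ at a point $\overline{q}$, and then applies slimness to the one triangle $\triangle ypx$: either $\overline{q}$ is close to $[x,y]$, as desired, or it is close to $[x,p]\subset\alpha$, which forces $d(\alpha,\beta)=d(p,q)$ to be bounded by a constant depending only on $\delta$ --- exactly what taking $K$ large rules out. (The claim for $p$ follows by symmetry.) You instead apply Point-to-Line twice to $[x,y]$ itself, which is clean and valid, and then reduce the lemma to the coarse projection fact that the nearest-point projection of $\beta$ to $\alpha$ has bounded diameter (and contains $p$, by minimality) once $d(\alpha,\beta)\ge K$. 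That fact is true and standard, you state it correctly, and you are right both that it is the crux and that the separation hypothesis is what makes it work: your parallel-lines caution is exactly the right picture. The trade-off: the paper's argument is three lines and self-contained, whereas yours proves a stronger, reusable statement at the cost of an extra lemma whose proof is roughly as long as the paper's entire argument.

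One repair to your sketch of the projection lemma: the contradiction should be with the separation hypothesis, or with the assumption that $d(p,p')$ is large, not with minimality of $d(p,q)$ (minimality is used only to know that $p$ is a nearest point of $\alpha$ to $q$, so that Point-to-Line applies to the pair $(q,p)$). A version of your quadrilateral argument that closes: draw the diagonal $[q,p']$; by Point-to-Line it passes near $p$ at some point $\overline{p}$; by slimness of $\triangle qyp'$ (with $[q,y]\subset\beta$), $\overline{p}$ is close to $[q,y]$ or to $[y,p']$. In the first case $p$ is boundedly close to $\beta$, contradicting $d(\alpha,\beta)\ge K$. In the second case, if $z\in[y,p']$ is the point close to $\overline{p}$ (hence to $p$), then $d(y,z)+d(z,p')=d(y,p')=d(y,\alpha)\le d(y,z)+d(z,p)$, so $d(z,p')\le d(z,p)$ is bounded, whence $d(p,p')$ is bounded. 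With that repair (or with your quasi-geodesic packaging, which requires the same taut-corner estimate) your proof is complete.
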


\begin{proof}
By Bounded Tracking, we may assume $x$ and $y$ lie on $\alpha$ and $\beta$ respectively.
Choose a geodesic $[p,y]$.
By the previous Lemma, we know that $[p,y]$ passes near $q$ at some point $\overline{q}$.
Examine a triangle $\triangle ypx$.  We know that $\overline{q}$ is
either close to $[y,x]$ or $[x,p]$.  So there is a $K$ such that if
$d(p,q)>K$, then $q$ is guaranteed to be close to $[x,y]$.
\end{proof}

\section{F-Fibers of CKA Groups}
Assume now that $G$ is a CKA group acting geometrically on two proper CAT(0) spaces $X$ and $X'$.
Both have decompositions into vertex and edge spaces corresponding to the Bass-Serre tree $T$.
To distinguish between the elements of the decomposition, we will use primes when refering to
the subspaces of $X'$.

Let $\overline{\Lambda}\subset\partial X\times\partial X'$ denote the F-set of
the pair $X$ and $X'$, $\phi:\overline{\Lambda}\to\partial X$ and
$\phi':\overline{\Lambda}\to\partial X'$ be the F-maps.  Since these are just
coordinate projection maps, for any $\zeta\in\partial X$, $\phi'$ restricts
to an embedding of the F-fiber $\phi^{-1}(\zeta)\subset\{\zeta\}\times\partial X'$
into $\partial X'$.
Therefore to prove that this F-fiber is contractible, it suffices to prove that its
image, $\Lambda'(\zeta)$ in $\partial X'$ is contractible.

\subsection{Overview of Proof}
The proof is broken up into three cases.  In Section~\ref{subsec:non-poles} we deal with
the case where $\zeta$ is in the boundary of a vertex space $Y_v$ but is not a pole.
In that case, we show that $\Lambda'(\zeta)$ is contained in a subspace of $\partial Y'_v$
homeomorphic to an arc.  Since it is connected by Theorem~\ref{Thm:connected fibres}, we know
that $\Lambda'(\zeta)$ is itself an arc.  This happens in two ways.  In the first subcase,
we assume $\zeta$ is not contained in $\partial Y_w$ for any other vertex $w$ of $T$.
Then we can use fundamental properties of $\delta$-hyperbolic groups to prove that
$\Lambda'(\zeta)$ is contained in the longitude of $\partial Y_v'$ corresponding to the longitude
of $\partial Y_v$ containing $\zeta$.  In the second subcase, $\zeta$ lies in $\partial Y_v\cap \partial Y_w$
for some vertex $w$ in the link of $v$.  Then $\Lambda'(\zeta)$ lies in a union of two longitudes
-- one from $\partial Y_v'$ and the other from $\partial Y_w'$.

In Section~\ref{subsec:poles} we deal with the case where $\zeta$ is a pole of the boundary of a
vertex space $\partial Y_v$.  In that case, $\Lambda'(\zeta)$ contains the pole $\zeta'\in\partial Y'_v$
corresponding to $\zeta$.  We prove that (1) $\Lambda'(\zeta)$ misses the other pole $\nu$ of $\partial Y'_v$
and (2) the obvious contraction of $\partial Y'_v\setminus\{\nu\}$ which collapses the longitudes to $\zeta'$
restricts to a contraction of $\Lambda'(\zeta)$.  In fact $\Lambda'(\zeta)$ is a cone,
but that is stronger than what we need.

The case where $\zeta$ is irrational is dealt with in Section~\ref{subsec:irrational}.  This is the easiest
for us to deal with, since Croke and Kleiner have already shown that components of $IX$ are points and arcs
\cite[Section~7.2]{Croke2002}.  Once we establish that no point of $\Lambda'(\zeta)$ can be rational,
it follows that it is a connected subset of either a point or an arc.
Thus in all three cases, $\Lambda'(\zeta)$ is contractible.

\subsection{Implications of Hyperbolicity}
\label{subsec:hyperbolicity}
Since $Y_v$ is quasi-dense in $X_v$,
whenever a geodesic $\gamma$ in $X$ passes through $X_v$,
there is a geodesic $\beta$ in $Y_v$ which tracks with $\gamma$
for as long as it stays in $X_v$.  Saying that its projection
passes near a point $x\in\overline{Y_v}$ is the same as saying
that $\beta$ (hence $\gamma$) passes near the line $\{x\}\times\R$.

Let $v$ be a vertex and $e$ an edge in its star with $w$ the other endpoint of $e$.
Property (4) in the definition of CKA groups tells us
that $G_e/Z(G_v)$ is a vitually cyclic subgroup of $H_v$.  Let
$\ELL{v,e}\subset\overline{Y_v}$ be an axis for this subgroup, so that the
2-flat $\ELL{v,e}\times\R$ is $G_e$-invariant.

Let $\mathcal{L}_v$ denote the collection of such lines in $\overline{Y_v}$;
this collection is locally finite.
Let $x_v\in Y_v$ be chosen basepoints
for every vertex $v$ of $T$ and that $y_v$ is the $\overline{Y_v}$-coordinate
of $x_v$.

If $(x_n)\subset Y_v$ is a sequence of points converging to a point
$\zeta\in\partial Y_v$ not a pole of $\partial Y_v$, then it is easy to check
which longitude $\zeta$ lies in by looking at the image $y_n$ of the sequence
under coordinate projection $Y_v\to\overline{Y_v}$.  If we identify
$\overline{Y_v}$ with the subspace $\overline{Y_v}\times\{0\}$ of $Y_v$,
then the limits of the sequences $(x_n)$ and $(y_n)$ lie in the same longitude.
Conversely, if $(y_n)$ converges to a point in a longitude $l$ of $\partial Y_v$,
then $\zeta$ lies in the closure of $l$ (either it lies in $l$ or it is a pole
of $\partial Y_v$).

\begin{rem}
\label{rem:strong rigidity}
Let $f_v:\overline{Y_v}\to\overline{Y'_v}$ be an $H_v$-equivariant quasi-isometry.
Since $H_v$ is negatively curved, $f_v$ extends to a homeomorphism
$\partial f_v:\partial\overline{Y_v}\to\partial\overline{Y'_v}$.
As proven in \cite{Bowers1996}, this extends to a
$G_v$-equivariant homeomorphism $\partial X_v\to\partial X'_v$
taking poles to poles and longitudes to longitudes (in fact, this is an isometry
in the Tits metric).  Given a longitude of $\partial X_v$, we will refer to its
image under this homeomorphism as \textit{the corresponding longitude of $\partial X'_v$}.
\end{rem}

\subsection{Types of Sequences}
Assume that $\zeta\in RX$, $v$ is a vertex of $T$, and $(g_n)\subset G$ is a sequence such that $g_nx_v\to\zeta$.
We may assume, after possibly passing to a subsequence,
that  $(g_n)$ has one of the following types.  In each case, $[v,g_nv]$ denotes the geodesic edge path in
$T$ from $v$ to $g_nv$.
\begin{itemize}
\item (Type A) $g_nv=v$ for all $n$. \\
\item (Type B) $d_T(v,g_nv)\ge 1$ but no pair of $[v,g_nv]$ shares the same first edge. \\
\item (Type C) There exists a $w$ in the link of $v$ such that $g_nv=w$ for all $n$. \\
\item (Type D) $d_T(v,g_nv)\ge 2$ and all $[v,g_nv]$ share the first edge, but no pair shares a second. \\
\item (Type E) $d_T(v,g_nv)\ge 2$ and all $[v,g_nv]$ share the same first two edges. \\
\end{itemize}

\begin{lemma}[Type E]
\label{lemma:the two overlapping edges case}
Suppose $(g_n)\subset G$ is a Type E sequence converging to a point $\zeta\in\partial X_v$.
Then $\zeta$ is a pole of $\partial X_w$ for some $w$ in the link of $v$.
\end{lemma}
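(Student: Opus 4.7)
The plan is to follow the geodesics $\gamma_n = [x_v, g_n x_v]$ as they traverse the vertex space $X_w$ corresponding to the common first edge $e_1 = [v,w]$, and then use the hyperbolicity of $\overline{Y_w}$ combined with property~(3) of CKA groups to force $\zeta$ to be a suspension point of $\partial X_w$.

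First I would identify the crossing points. Let $e_2 = [w,u]$ denote the common second edge. By Lemma~\ref{le-CKA decomp}(5), each $\gamma_n$ exits $X_v$ through $X_{e_1}$ at some point $p_n$ and subsequently exits $X_w$ through $X_{e_2}$ at some point $q_n$. Since $X_v$ is closed and convex and $\zeta \in \partial X_v$, the ray $\gamma = [x_v, \zeta)$ stays in $X_v$. As $g_n x_v \to \zeta$, the geodesics $\gamma_n$ converge to $\gamma$ uniformly on compact sets, so the exit parameters $L_n$ and $M_n$ must tend to $\infty$. A standard cone-topology argument then yields $p_n \to \zeta$ and $q_n \to \zeta$; in particular, $p_n \in X_w$ places $\zeta$ in $\partial X_w$.

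Next, using the splitting $Y_w = \overline{Y_w} \times \R$ from Lemma~\ref{le-CKA decomp}(4), I would project $p_n$ and $q_n$ to $\overline{Y_w}$-coordinates $y_n^w$ and $z_n^w$. Since $p_n \in X_{e_1}$ and $q_n \in X_{e_2}$, these projections lie within bounded distance of the axes $\ELL{w,e_1}$ and $\ELL{w,e_2}$, respectively. Suppose, for contradiction, that $\zeta$ is not a pole of $\partial X_w$. Then $\zeta$ has a well-defined direction $\xi \in \partial \overline{Y_w}$ in the suspension structure, and the convergences $p_n \to \zeta$ and $q_n \to \zeta$ force both $y_n^w \to \xi$ and $z_n^w \to \xi$. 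Thus $\xi$ would be a common endpoint at infinity of $\ELL{w,e_1}$ and $\ELL{w,e_2}$.

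The main obstacle is ruling out this shared endpoint. Since $H_w$ acts geometrically on the $\delta$-hyperbolic space $\overline{Y_w}$, the group $H_w$ is word-hyperbolic. Property~(3) of CKA groups ensures that the virtually cyclic subgroups $K_i = G_{e_i}/Z(G_w) \subset H_w$ are non-commensurable. In a non-elementary word-hyperbolic group, the stabilizer of any boundary point is virtually cyclic, so two virtually cyclic subgroups sharing an endpoint both lie inside a common virtually cyclic group and are therefore commensurable, contradicting property~(3). Hence $\ELL{w,e_1}$ and $\ELL{w,e_2}$ have disjoint endpoints at infinity, and $\zeta$ must be a pole of $\partial X_w$. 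Since $w$ is in the link of $v$, the proof is complete.
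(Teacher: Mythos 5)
Your overall route is genuinely different from the paper's (the paper pushes every $\gamma_n$ past a single vertical line $\{\overline{p}\}\times\R$ via Lemma~\ref{lemma:signposts} and then runs a bounded/unbounded dichotomy), and your endgame --- exit points through $X_{e_1}$ and $X_{e_2}$, projection to longitudes, and the ``shared endpoint implies commensurable'' fact against CKA property (3) --- is sound. But there is a genuine gap at the step on which everything else depends: the claim that the exit parameters $L_n$ and $M_n$ tend to $\infty$ does \emph{not} follow from ``$\gamma\subset X_v$ plus uniform convergence on compact sets.'' Uniform convergence on compacta constrains $\gamma_n$ only on bounded parameter windows; it is entirely consistent with $\gamma_n$ leaving $X_v$ at a bounded parameter and thereafter running just outside $X_v$, hugging its frontier, with $d(\gamma_n(t),X_v)\to 0$ for each fixed $t$. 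In that regime $p_n$ converges to a finite point of $X_{e_1}$, not to $\zeta$, and the limit ray is forced into $X_{e_1}\subset X_v$, which produces no contradiction with $\zeta\in\partial X_v$. This regime cannot be dismissed as exotic here: take $g_n=c^na$ where $c$ is a power of a generator of $Z(G_w)$ lying in $G_{e_1}\cap G_{e_2}$ (such a power exists by property (4)); since the flat $Y_{e_1}\subset X_v$ is $c$-invariant, $d(g_nx_v,X_v)\le d(ax_v,Y_{e_1})$ is a constant, and convexity of $d(\cdot,X_v)$ then keeps the \emph{entire} geodesic $\gamma_n$ within that constant of $X_v$. So your one-line justification rules out nothing, and the bounded-exit case is exactly where the content of the lemma lives --- it is the case the paper's own proof spends its effort on.

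The good news is that the gap is repairable inside your framework, because bounded exit times also lead to the desired conclusion, just by a different sub-argument. If $L_n$ stays bounded, then for each fixed $t$ past the bound, $\gamma_n(t)$ lies (by convexity of $X_v$, it never re-enters) in the union of the $X_\sigma$ with $\sigma$ on the far side of $e_1$; this union is closed, so $\gamma(t)$ lies in its intersection with $X_v$, which by Lemma~\ref{le-CKA decomp}(5) is contained in $X_{e_1}$. Hence the tail of $\gamma$ is trapped in $X_{e_1}$ and $\zeta\in\partial X_{e_1}$; if $M_n$ is also bounded the tail is trapped in $X_{e_1}\cap X_{e_2}$, which is coarsely $\{\overline{p}\}\times\R$, giving the pole outright; if $M_n\to\infty$ your cone-topology argument gives $\zeta\in\partial X_{e_2}$ as well, and then your non-commensurability argument finishes. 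You must run these cases explicitly; as written, the proof silently assumes them away. Two minor further points: property (3) as stated concerns the $\Z^2$-images of the edge groups in $\overline{G_w}$, not their images in $H_w$, so passing to non-commensurability of the virtually cyclic subgroups $K_i\subset H_w$ needs property (4) (both $\Z^2$'s contain a finite-index subgroup of $Z(G_w)$); and note that the divergence of $\ELL{w,e_1}$ and $\ELL{w,e_2}$ that you establish is also implicitly required by the paper to apply Lemma~\ref{lemma:signposts}, so making it explicit is a virtue of your approach.
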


\begin{proof}
Denote the
geodesic ray emanating from $x_v$ and going out to $\zeta$ by $\gamma$.
Let $e_1$ and $e_2$ denote the first two edges shared by all geodesics $[v,g_nv]$,
and $w$ denote the vertex shared by $e_1$ and $e_2$.
Let $\gamma_n$ be the geodesic
$[x_v,g_nx_v]$; then $\gamma_n\to\gamma$.  In $\overline{Y_w}$,
let $\overline{p}\in \ELL{w,e_1}$ be a closest point to $\ELL{w,e_2}$.
By Lemma \ref{lemma:signposts}, every $\gamma_n$ passes near a point $x_n\in\{\overline{p}\}\times\R$;
If $(x_n)$ is bounded, then $\gamma$ leaves $X_v$, and $\zeta\notin\partial X_v$.
So $(x_n)$ must be unbounded, and $\zeta$ must be a pole of $\partial X_w$.
\end{proof}

\begin{lemma}[Types A and B]
\label{lemma:if point not in block boundary then vertex separates}
Let $(g_n)\subset G$ be a sequence such that $g_nx_v$ converges to a point $\zeta\in\partial X_v$.
If $(g_n)$ has type A or B, then it converges to a point $\zeta\in\partial X_v$.
\end{lemma}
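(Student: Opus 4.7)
My plan is to treat Types A and B separately, with Type B being the real content. Type A is immediate: if every $g_n \in G_v$, then $g_n x_v \in G_v \cdot x_v \subseteq Y_v \subseteq X_v$, and because $X_v$ is a closed convex subspace of the proper CAT(0) space $X$, any limit in $\partial X$ of a sequence in $X_v$ must lie in $\partial X_v$.

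For Type B I would argue by contradiction, supposing the limit satisfies $\zeta \notin \partial X_v$. Let $\gamma$ be the geodesic ray from $x_v$ representing $\zeta$. By the classification lemma for geodesic rays stated just after Lemma \ref{le-CKA decomp}, either $\rho \circ \gamma$ is unbounded and its image lies in a tubular neighborhood of a unique ray $\tau$ in $T$ starting at $v$, or $\rho \circ \gamma$ is bounded and $\gamma$ eventually remains inside some vertex space $X_u$ (with $u \neq v$, since otherwise $\zeta$ would belong to $\partial X_v$). In either case $\gamma$ exits $X_v$ permanently at some finite time $t_0$, and by part (5) of Lemma \ref{le-CKA decomp} this exit occurs through a uniquely determined edge space $X_e$ with $e$ in the star of $v$. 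Because $g_n x_v \to \zeta$, the geodesic segments $\gamma_n = [x_v, g_n x_v]$ converge uniformly on compact subsets of $[0, \infty)$ to $\gamma$. For $n$ sufficiently large, $\gamma_n$ tracks $\gamma$ past time $t_0$ and must also cross $X_e$ near $\gamma(t_0)$ before entering any other edge space adjacent to $v$. Applying Lemma \ref{le-CKA decomp}(5) once more, the first edge of the edge path $[v, g_n v]$ is forced to equal $e$ for all large $n$. This contradicts the Type B hypothesis that no two of the paths $[v, g_n v]$ share a first edge, completing the argument.

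The main obstacle is ensuring that the first edge space entered by $\gamma_n$ upon leaving $X_v$ really is $X_e$, rather than some neighboring $X_{e'}$ that $\gamma_n$ might hit by wandering within a small tubular neighborhood of $\gamma$. This calls for leveraging local finiteness of the family $\{X_e\}$ around $x_v$ together with convexity of each individual $X_e$: on the compact interval $[0, t_0 + 1]$ only finitely many edge spaces come within bounded distance of $\gamma$, so uniform convergence on compacts is enough to force $\gamma_n$ to enter these edge spaces in the same order, and at roughly the same times, as $\gamma$ does. I expect this technical step to require some bookkeeping but no genuinely new ideas beyond the convexity and local finiteness tools already set up in Section \ref{CKA groups}.
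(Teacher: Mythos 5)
Your Type A argument is correct and is exactly what the paper does. The Type B argument, however, breaks down at its decisive step: you deduce ``the first edge of $[v,g_nv]$ equals $e$'' from ``$\gamma_n$ crosses $X_e$,'' citing Lemma~\ref{le-CKA decomp}(5). But part (5) only gives the forward implication --- if an edge separates two vertices of $T$, then paths between the corresponding vertex spaces must cross that edge space --- whereas you need its converse, and the converse is false. Indeed, by Lemma~\ref{le-CKA decomp}(1) the interiors of the edge spaces themselves cover $X$, so \emph{every} point of \emph{every} geodesic lies in some edge space; crossing $X_e$ therefore carries no information about which direction in $T$ the geodesic is heading, and a geodesic from $X_v$ to $X_{g_nv}$ can perfectly well pass through edge spaces whose edges do not lie on $[v,g_nv]$ at all. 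There is also a secondary problem in the same step: ``crossing $X_e$'' is a closed condition, so uniform-on-compacta convergence $\gamma_n\to\gamma$ does not transfer it to the $\gamma_n$ (only ``passing near $X_e$'' transfers), and the classification lemma only guarantees that $\gamma$ meets $X_{e'}$ for all but finitely many edges $e'$ of $\tau$ --- possibly excluding the first one. Your closing paragraph senses a difficulty but locates it in the timing and ordering of edge-space crossings; local finiteness and convexity of the $X_e$ cannot repair an inference that is the converse of the only separation statement available.

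The paper's proof avoids both problems by working with vertex spaces and convexity of $X_v$ instead. Since the interiors of the vertex spaces cover $X$, the exit point $z$ of $\gamma$ from $X_v$ lies in the interior of some $X_w$ with $w\neq v$ (it cannot lie in the interior of $X_v$), so $\gamma$ passes through the open set consisting of the interior of $X_w$ minus $X_v$; because this condition is open, $\gamma_n$ passes through it too for large $n$. For Type A this is already absurd, since $g_nx_v\in X_v$ forces $\gamma_n\subset X_v$ by convexity. For Type B, convexity of $X_v$ means that once $\gamma_n$ leaves $X_v$ it can never return; since the terminal portion of $\gamma_n$ must still travel from $X_w$ to $X_{g_nv}$ without touching $X_v$, the separation structure of the decomposition forces $w$ to lie on $[v,g_nv]$. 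Hence all the paths $[v,g_nv]$ (for large $n$) share the same first edge, the one pointing from $v$ toward $w$, contradicting the defining property of Type B. The missing idea in your write-up is precisely this ``no-return'' use of convexity of $X_v$ combined with an open, interior-of-vertex-space condition, in place of trying to identify which edge space the geodesics cross.
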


\begin{proof}
Assume $\zeta\notin\partial X_v$.
Let $\gamma_n$ denote the geodesic $[x_v,g_nx_v]$.  By hypothesis,
$\gamma_n$ converges to some geodesic ray $\gamma$ emanating from $x_v$
and going out to $\zeta$.
Since we assumed that $\zeta\notin\partial X_v$,
$\gamma$ leaves $X_v$ at some point $z$ interior to a vertex space
$X_w$ where $w\neq v$.

Since $\gamma_n\to\gamma$ and $\gamma$ also passes through
the interior of $X_w\setminus X_v$,
so does $\gamma_n$ (when $n$ is large).
If the sequence has Type A, then $\{g_nx_v\}$ is contained in $X_v$
and $\zeta\in\partial X_v$.  If it has Type B, then
convexity of $X_v$ guarantees that $w$ lies between $v$ and $g_nv$
(which means that all $[v,g_nv]$ share the same first edge).
Either way, we are in trouble.
\end{proof}

\subsection{F-Fibers of Rational Non-Poles}
\label{subsec:non-poles}
The purpose of this section is to prove

\begin{prop}
\label{prop:f-fibers of non-poles}
Suppose $\zeta\in\partial X_v$ is not a pole.
Then $\Lambda(\zeta)$ is a subset of an arc.
To be precise
\begin{enumerate}
\item If $\zeta\notin\partial X_w$ for any vertex $w$ in the link of $v$, then $\Lambda(\zeta)$
is contained in the closure of the longitude of $\partial X_v'$ corresponding to the longitude
containing $\zeta$. \\
\item If $\zeta\in\partial X_w$ for some vertex $w$ in the link of $v$, then it lies in an intersection
of two longitudes -- one from $\partial X_v$ and one from $\partial X_w$.  Then $\Lambda(\zeta)$
lies in the closure of the union of the corresponding longitudes of $\partial X_v'$ and $\partial X_w'$.
\end{enumerate}
\end{prop}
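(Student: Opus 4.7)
The plan is to fix any $\nu\in\Lambda(\zeta)$, choose a sequence $(g_n)\subset G$ with $g_n x_v\to\zeta$ in $X$ and $g_n x_v'\to\nu$ in $X'$, and, after passing to a subsequence, classify $(g_n)$ among the Types A--E of the previous subsection. Type E is ruled out immediately by Lemma~\ref{lemma:the two overlapping edges case}, since $\zeta$ is not a pole. Type C forces $g_n x_v\in X_w$ and hence $\zeta\in\partial X_w$; a signpost argument via Lemma~\ref{lemma:signposts} applied in $\overline{Y_v}$ likewise shows that any Type D sequence with limit in $\partial X_v$ must have $\bar\zeta$ equal to an endpoint of $\ELL{v,e_1}$, hence $\zeta\in\partial X_e\subset\partial X_w$. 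Thus in case~(1) only Types A and B can occur, while in case~(2) all of A--D are possible.

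For Type A, $(g_n)\subset G_v$, so $g_n x_v'\in X_v'$ and every subsequential limit $\nu$ lies in $\partial X_v'$. The projections satisfy $\bar{g_n}\cdot y_v\to\bar\zeta$ in $\overline{Y_v}$, and by the $H_v$-equivariant boundary homeomorphism $\partial f_v$ of Remark~\ref{rem:strong rigidity} together with standard hyperbolic rigidity, $\bar{g_n}\cdot y_v'\to\bar\zeta':=\partial f_v(\bar\zeta)$ in $\overline{Y_v'}$. By the discussion in Subsection~\ref{subsec:hyperbolicity}, $\nu$ then lies in the closure of the longitude of $\partial X_v'$ corresponding to $\bar\zeta'$ (the two poles are the only other possibilities, and they too lie in this closure).

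For Type B, the geodesics $\gamma_n=[x_v,g_n x_v]$ converge to a ray $\gamma\subset X_v$ ending at $\zeta$, and the exit points of $\gamma_n$ from $X_v$ lie near the 2-flats $Y_{e_n}$ (for the distinct first edges $e_n$ of $[v,g_n v]$), projecting onto $\ELL{v,e_n}\subset\overline{Y_v}$ and converging to $\bar\zeta$. Using $f_v$ and its boundary extension, the corresponding exit points of $\gamma_n'=[x_v',g_n x_v']$ in $\overline{Y_v'}$ converge to $\bar\zeta'$; quasi-isometry control on exit distances shows that $\gamma_n'$ remains inside $X_v'$ for arbitrarily long initial segments, hence $\nu\in\partial X_v'$, and the same projection argument places $\nu$ in the closure of the corresponding longitude.

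Type C reduces to Type A by writing $g_n=g h_n$ for a fixed $g$ with $gv=w$ and $h_n\in G_v$: then $h_n x_v\to g^{-1}\zeta\in\partial X_v$ is of Type A, and translating back by $g$ places $\nu$ on the corresponding longitude of $\partial X_w'$. Type D parallels Type B, with the exit points from $X_v$ now confined to the single line $\ELL{v,e_1}$ and a further tracking argument inside $X_w$ producing $\nu$ on the corresponding longitude of $\partial X_v'$ or $\partial X_w'$. The principal obstacle is the careful $\delta$-hyperbolic tracking that underlies Types B and D: one must argue, using hyperbolicity of $\overline{Y_v'}$ together with the QI control from $f_v$, that the exit points of $\gamma_n'$ from $X_v'$ both lie near the correct lines $\ELL{v,e_n}'$ and depart from $x_v'$ at a rate forcing $\nu$ to remain in the closure of the relevant longitude rather than escaping into some other vertex-space boundary.
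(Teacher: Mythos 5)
Your proposal is correct and follows essentially the same route as the paper's proof: classify the approximating sequence $(g_n)$ into Types A--E, exclude Type E via Lemma~\ref{lemma:the two overlapping edges case}, and for each remaining type pin down the limit longitude using closest-point/signpost arguments in $\overline{Y_v}$ or $\overline{Y_w}$ combined with the equivariant quasi-isometry boundary rigidity of Remark~\ref{rem:strong rigidity} and Lemma~\ref{lemma:signposts-QI}. The only cosmetic deviations are that you establish containment of the limit in $\partial X_v'$ (resp.\ $\partial X_w'$) type-by-type via growth of exit distances, where the paper invokes Lemma~\ref{le:same block} once at the outset, and that you handle Type C by translating back to Type A rather than arguing directly in $\overline{Y_w}$.
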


Since $\Lambda(\zeta)$ is connected, the intermediate value theorem guarantees that it is
an arc.  So

\begin{corollary}
If $\zeta\in \partial X_v$ is not a pole, then $\Lambda(\zeta)$ is an arc.
\end{corollary}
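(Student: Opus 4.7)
The plan is to derive the corollary immediately from Proposition~\ref{prop:f-fibers of non-poles} together with Theorem~\ref{Thm:connected fibres}.  My first step would be to use Proposition~\ref{prop:f-fibers of non-poles} to locate $\Lambda(\zeta)$ inside a subspace $A \subset \partial X'$ that is homeomorphic to a closed arc.  In case~(1) of the proposition this is immediate: the closure of a single longitude of $\partial X_v'$ is a suspension arc from pole to pole, hence an arc.  In case~(2) the candidate $A$ is the closure of the union of two corresponding longitudes, one in $\partial X_v'$ and one in $\partial X_w'$, and I would need to verify that this union is again a topological arc.

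Once $A$ has been identified as an arc, the remainder is essentially soft.  Theorem~\ref{Thm:connected fibres} tells us that the schmear fiber $\phi^{-1}(\zeta)\subset\Lambda$ is connected, and $\Lambda(\zeta)$ is the continuous image of this fiber under coordinate projection, hence is itself connected.  A nonempty compact connected subset of a closed arc is a closed subinterval by the intermediate value theorem, so $\Lambda(\zeta)$ is an arc (allowing degeneration to a single point as a trivial arc).

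The main obstacle is the verification in case~(2) that the union of the two corresponding longitudes is genuinely an arc, rather than, say, a ``plus'' configuration where two longitudes cross transversely at an interior point.  Here I would use the shared edge-space structure: since $\zeta\in\partial X_v\cap\partial X_w$, it must lie in $\partial X_e$ where $e$ is the edge of $T$ joining $v$ and $w$, and $X_e$ is a $2$-flat whose boundary circle sits compatibly inside both $\partial X_v$ and $\partial X_w$.  Combined with the $G_v$- and $G_w$-equivariant identifications of Remark~\ref{rem:strong rigidity}, this should pin down the exact way the two primed longitudes come together at the point corresponding to $\zeta$ and force their union to be an arc.
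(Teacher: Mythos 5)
Your proposal is correct and follows essentially the same route as the paper: Proposition~\ref{prop:f-fibers of non-poles} places $\Lambda(\zeta)$ inside (the closure of) one longitude or a union of two, Theorem~\ref{Thm:connected fibres} gives connectedness, and a compact connected subset of an arc is an arc. In fact you are slightly more careful than the paper, which dispatches the corollary with one sentence and leaves implicit the case~(2) verification that the two corresponding longitude closures, viewed as semicircles of the circle $\partial X_e'$ with four distinct pole endpoints, unite to form an arc rather than the whole circle.
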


\begin{lemma}
\label{lemma:signposts-QI}
Let $v$ be a vertex of $T$ and $e_1$ and $e_2$ be two edges in the star of $v$.
\begin{enumerate}
\item Let $\overline{p}\in\overline{Y_v}$, $\overline{q}\in L_1=L(v,e_1)$ be the point
closest to $\overline{p}$ and $\overline{q'}\in L_1'=L'(v,e_1)$ be the point closest to
$f_v(\overline{p})$.  Then $f_v(\overline{q})$ is close to $\overline{q'}$. \\
\item Let $\overline{p}\in L_2=\ELL{v,e_2}$ be the closest point to $L_1=\ELL{v,e_1}$ and
$\overline{p'}\in L_2'=\ELLprime{v,e_2}$ be the closest point to $L_1'=\ELLprime{v,e_1}$.
Then $f_v(\overline p)$ is close to $\overline p'$.
\end{enumerate}
\end{lemma}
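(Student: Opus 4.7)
The plan is to leverage the $H_v$-equivariance of $f_v$, which guarantees that $f_v(L_i)$ is invariant under the same virtually cyclic subgroup $K_i = G_{e_i}/Z(G_v) \leq H_v$ whose axis in $\overline{Y'_v}$ is $L'_i$. Standard facts about quasi-axes in $\delta$-hyperbolic spaces then imply that $f_v(L_i)$ and $L'_i$ lie at bounded Hausdorff distance, and that the boundary extension $\partial f_v$ carries the endpoints of $L_i$ to those of $L'_i$ (cf.~Remark \ref{rem:strong rigidity}). Throughout, ``close'' means bounded by a constant depending only on $\delta$ and the quasi-isometry constants of $f_v$.

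For part (1), the strategy is to identify both $f_v(\overline{q})$ and $\overline{q'}$ as near-centers of the ideal triangle in $\overline{Y'_v}$ with vertices $f_v(\overline{p})$ and the two endpoints $\zeta_\pm \in \partial \overline{Y'_v}$ of $L'_1$. By Lemma \ref{lemma:point-to-line}, the geodesic rays $[\overline{p}, \zeta_\pm]$ in $\overline{Y_v}$ (where $\zeta_\pm$ here denote the endpoints of $L_1$, identified with those of $L'_1$ by $\partial f_v$) pass near $\overline{q}$. Applying $f_v$ and invoking the Stability of Quasi-Geodesics together with the Bounded Tracking Property, the rays $[f_v(\overline{p}), \zeta_\pm]$ pass near $f_v(\overline{q})$. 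The third side $L'_1$ is close to $f_v(\overline{q})$ since $f_v(\overline{q}) \in f_v(L_1)$ which tracks $L'_1$. Thus $f_v(\overline{q})$ is close to all three sides of the ideal triangle; since $\overline{q'}$ is likewise a near-center (by Lemma \ref{lemma:point-to-line} applied in $\overline{Y'_v}$), Lemma \ref{lemma:ideal triangle center} yields $f_v(\overline{q})$ close to $\overline{q'}$.

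For part (2), I would adapt this with Lemma \ref{lemma:signposts} replacing Lemma \ref{lemma:point-to-line}. Choose $x \in L_1$ and $y \in L_2$ far from the closest-point pair $(\overline{p_1}, \overline{p})$, where $\overline{p_1} \in L_1$ realizes the minimum distance to $L_2$. By Lemma \ref{lemma:signposts} in $\overline{Y_v}$, the geodesic $[x,y]$ passes near both $\overline{p_1}$ and $\overline{p}$. Applying $f_v$ produces a quasi-geodesic in $\overline{Y'_v}$ from $f_v(x)$ (close to $L'_1$) to $f_v(y)$ (close to $L'_2$) passing near $f_v(\overline{p_1})$ and $f_v(\overline{p})$. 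Its tracking geodesic therefore passes near both of these images, and by Lemma \ref{lemma:signposts} applied in $\overline{Y'_v}$, this same geodesic also passes near the true closest-point pair $(\overline{p_1'}, \overline{p'})$. Since $f_v(\overline{p_1})$ is close to $L'_1$ while $f_v(\overline{p})$ is close to $L'_2$, matching the near-$L'_1$ signposts and the near-$L'_2$ signposts forces $f_v(\overline{p})$ close to $\overline{p'}$ (and incidentally $f_v(\overline{p_1})$ close to $\overline{p_1'}$).

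The main obstacle is verifying that the hypotheses of Lemma \ref{lemma:signposts} are in force for the pair $L_1, L_2$ (and for $L'_1, L'_2$). The CKA axiom (3) makes $K_1$ and $K_2$ non-commensurable virtually cyclic subgroups of the hyperbolic group $H_v$, so $L_1$ and $L_2$ share no endpoints on $\partial \overline{Y_v}$; this divergence forces the region in which the lines are mutually $K$-close to be bounded. Either a minor strengthening of Lemma \ref{lemma:signposts} allowing the lines to enter each other's $K$-neighborhood on a bounded region, or the simple expedient of choosing $x, y$ far beyond this region, then applies. Ensuring all constants remain uniform over the finitely many $G$-orbits of vertices of $T$ completes the argument.
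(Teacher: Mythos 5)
Your part (1) is correct, and it is in essence the paper's own method: exhibit both points as coarse centers of a single ideal triangle and invoke Lemma \ref{lemma:ideal triangle center}. (The paper writes out exactly this argument, but for part (2): it takes the ideal triangle with vertices $h_1^\infty$, $h_2^\infty$, $h_2^{-\infty}$ --- sides $L_2'$ together with geodesics $A'$, $B'$ joining $h_1^\infty$ to $h_2^{\pm\infty}$ --- checks that $f_v(\overline{p})$ and $\overline{p'}$ are both close to all three sides, and then remarks that part (1) is ``no harder.'')

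The genuine gap is the last step of your part (2). You correctly conclude that the tracking geodesic $\gamma'$ of $f_v([x,y])$ passes near $f_v(\overline{p_1})$, $f_v(\overline{p})$, $\overline{p_1'}$, and $\overline{p'}$, and that $f_v(\overline{p})$ and $\overline{p'}$ are both near $L_2'$. But these facts do not ``force'' $f_v(\overline{p})$ to be close to $\overline{p'}$: since $\gamma'$ ends at $f_v(y)$, which is itself near $L_2'$, the entire terminal segment of $\gamma'$ from (roughly) $\overline{p'}$ to $f_v(y)$ fellow-travels $L_2'$, and that segment is arbitrarily long --- its length grows with your choice of $y$. Every point of it is ``near $\gamma'$ and near $L_2'$,'' so your two conditions only locate $f_v(\overline{p})$ somewhere beyond $\overline{p'}$ on this segment; they yield a one-sided coarse inequality, not closeness. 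Two repairs are available: (i) use geodesics from $x$ to points $y_+$ and $y_-$ far out on \emph{both} ends of $L_2$; each image geodesic passes near $f_v(\overline{p})$ and near $\overline{p'}$, but the first fellow-travels $L_2'$ only on the $+$ side of $\overline{p'}$ and the second only on the $-$ side, so the set of points near $L_2'$ and near both image geodesics is a bounded neighborhood of $\overline{p'}$, which must contain $f_v(\overline{p})$; or (ii) simply rerun your part (1) ideal-triangle argument, which is what the paper does. Finally, your closing worry is well founded, but your proposed resolution is off target: what is needed is a uniform bound on the region where $L_1$ and $L_2$ (and $L_1'$ and $L_2'$) come close --- otherwise ``the closest point'' is not even coarsely well defined --- and this does not follow from finiteness of vertex orbits, since there are infinitely many $G_v$-orbits of pairs of edges at $v$. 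It follows instead from CKA axiom (3) together with the standard fact that non-commensurable quasiconvex cyclic subgroups of a hyperbolic group have uniformly bounded coarse intersections; the paper's own proof quietly relies on the same fact.
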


\begin{proof}
We know that $L_1$ and $L_1'$ are axes for the same group element $h_1\in H_v$.
Similarly, $L_2$ and $L_2'$ are axes for the same group element $h_2$.
Let $A$ be a geodesic line in $\overline{Y_v}$ joining the points $h_1^\infty$ and $h_2^\infty$,
$B$ be a geodesic line in $\overline{Y_v}$ joining the points $h_1^\infty$ and $h_2^{-\infty}$,
and $A'$ and $B'$ be chosen analogously in $\overline{Y_v}'$.
By Stability of Quasi-Geodesics, $f_v(A)$ tracks with $A'$, $f_v(B)$ tracks with $B'$,
and $f_v(L_2)$ tracks with $L_2'$.  In particular, $f_v(\overline p)$ is close
to $A'$, $B'$, and $L_2'$.  Now apply Lemma~\ref{lemma:ideal triangle center} this proves
(2).  The proof for (1) is no harder.
\end{proof}

\begin{lemma}[Containment in Vertex Space Boundary]
\label{le:same block}
Let $(g_n)\subset G$ be a sequence such that $g_nx_v\to\zeta\in\partial X$ and $g_nx_v'\to\zeta'\in\partial X'$.
If $\zeta'\in\partial X'_v$ and $\zeta'$ is not a pole of $\partial X'_w$ for any $w$ in the
link of $v$, then $\zeta\in\partial X_v$.
\end{lemma}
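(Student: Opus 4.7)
The plan is to pass to a subsequence so that $(g_n)$ falls into one of the five types A--E from the preceding subsection, and to handle each case separately. For \emph{Types A and B}, Lemma \ref{lemma:if point not in block boundary then vertex separates} gives $\zeta\in\partial X_v$ immediately. For \emph{Type E}, I would apply Lemma \ref{lemma:the two overlapping edges case} to the $X'$-action; it would force $\zeta'$ to be a pole of $\partial X'_w$ for the common second vertex $w$, contradicting the hypothesis. For \emph{Type D} (with $e=[v,w]$ the shared first edge), I would run a variant of that argument: applying Lemma \ref{lemma:signposts} to $L_1=\ELL{w,e}$ and the exit axes $L_n=\ELL{w,e_n}$ shows that the image of $\gamma=\lim\gamma_n$ in $\overline{Y_w}$ travels along $L_1$ to one of its endpoints $\xi_1\in\partial\overline{Y_w}$; the key input is that $d(L_1,L_n)\to\infty$ by local finiteness of $\mathcal{L}_w$, forcing the transit points $p_n\in L_1$ to escape to one end. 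Either $\zeta$ is then a pole of $\partial X_w$---yielding a contradiction with the hypothesis via $X'$ as in Type E---or $\zeta$ lies on the longitude of $\partial X_w$ over $\xi_1$, which is contained in $\partial X_e\subset\partial X_v$.

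\emph{Type C} is the main case. I would fix $g_0\in G$ with $g_0v=w$ and write $g_n=g_0k_n$ with $k_n\in G_v$. Setting $\eta=g_0^{-1}\zeta$, $\eta'=g_0^{-1}\zeta'$, $u=g_0^{-1}v$, and $e^*=g_0^{-1}e=[u,v]$, one then has $k_nx_v\to\eta\in\partial X_v$ and $k_nx_v'\to\eta'\in\partial X_v'$. Since every $g_nx_v'$ lies in $X_w'$, the limit $\zeta'$ belongs to $\partial X_w'$; combined with the hypothesis $\zeta'\in\partial X_v'$, this forces $\zeta'\in\partial X_v'\cap\partial X_w'=\partial X_e'$, whence $\eta'\in\partial X_{e^*}'$. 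The hypothesis that $\zeta'$ is not a pole of $\partial X_w'$ translates, via $g_0^{-1}w=v$, into $\eta'$ not being a pole of $\partial X_v'$. Thus $\eta'$ lies in the open longitude of $\partial X_v'$ over one of the two endpoints $\xi'\in\partial\overline{Y_v'}$ of $\ELLprime{v,e^*}$.

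To finish Type C I would invoke the $H_v$-equivariant boundary homeomorphism $\partial\overline{Y_v'}\to\partial\overline{Y_v}$ supplied by Remark \ref{rem:strong rigidity}. Since $\ELL{v,e^*}$ and $\ELLprime{v,e^*}$ are axes for the same element of $H_v$, this homeomorphism carries endpoints of $\ELLprime{v,e^*}$ to endpoints of $\ELL{v,e^*}$. The $\overline{Y_v'}$-projection of $k_nx_v'$ is $h_ny_v'$ (where $h_n$ is the image of $k_n$ in $H_v$) and converges to $\xi'$; equivariance then gives $h_ny_v\to\xi$ in $\overline{Y_v}$, where $\xi$ is the corresponding endpoint of $\ELL{v,e^*}$. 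A slope computation in the spirit of Lemma \ref{lemma:slope thingy} rules out $\eta$ being a pole of $\partial X_v$ (else $\eta'$ would also be one), so $\eta$ lies in the open longitude of $\partial X_v$ over $\xi$, which is part of $\partial X_{e^*}$; applying $g_0$ yields $\zeta\in\partial X_e\subset\partial X_v$. I expect Type C to be the main obstacle, since it is the only case where the hypothesis must be \emph{transferred} from $X'$ to $X$ through the strong rigidity of the hyperbolic quotients.
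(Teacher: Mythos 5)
Your treatment of Types A, B, and E coincides with the paper's: it too rules out Type E by applying Lemma \ref{lemma:the two overlapping edges case} to the $X'$-action and Types A and B via Lemma \ref{lemma:if point not in block boundary then vertex separates}. Your Type C argument is also sound and is essentially the paper's argument run in the opposite direction: the paper works at $w$ by contradiction (setting $a_n=g_ng_1^{-1}\in G_w$, assuming $\zeta\notin\partial X_v$, and using strong rigidity of $H_w$ to push the contradiction into $X'$), while you translate by $g_0^{-1}$ to work in $G_v$ and argue directly; both hinge on the same boundary homeomorphism of Remark \ref{rem:strong rigidity}. One blemish there: your final pole-exclusion step is both unnecessary (the \emph{closed} longitude over $\xi$, poles included, already lies in $\partial X_{e^*}$, so $\eta\in\partial X_{e^*}$ in any case) and incorrectly justified --- Lemma \ref{lemma:slope thingy} only forbids the combination $c_v^{\infty}$ versus $c_v^{-\infty}$; it does not say that $\eta$ being a pole forces $\eta'$ to be one, and indeed Proposition \ref{prop:f-fibers of poles} shows fibers over poles contain many non-poles.

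The genuine gap is Type D. First, the inference that $d(L_1,L_n)\to\infty$ ``forces the transit points $p_n\in L_1$ to escape to one end'' is a non sequitur: local finiteness of $\mathcal{L}_w$ controls the exit lines $L_n$ (it forces the points $q_n\in L_n$ closest to $L_1$ to leave every compact set), but it says nothing about where on $L_1$ the closest-point projections sit; the $p_n$ can remain in a bounded set while $d(L_1,L_n)\to\infty$ (picture lines running off transversally to $L_1$ from a fixed spot). Second, and more fundamentally, even if the $p_n$ did escape, $\zeta$ is not controlled by them: Lemma \ref{lemma:signposts} makes $\gamma_n$ pass near \emph{both} $\{p_n\}\times\R$ and $\{q_n\}\times\R$, and since any sequence of points on the $\gamma_n$ whose distance from $x_v$ tends to infinity converges to $\zeta$, the limit $\zeta$ lies in the closed longitude of $\partial X_w$ over $\nu=\lim q_n$ --- and $\nu$ may be any point of $\partial\overline{Y_w}$, not necessarily an endpoint of $L_1$. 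So your dichotomy (``pole of $\partial X_w$, or longitude over an endpoint of $L_1$'') omits precisely the dangerous case: $\zeta$ in an open longitude over some $\nu\notin\partial L_1$, i.e.\ $\zeta\in\partial X_w\setminus\partial X_e$, hence $\zeta\notin\partial X_v$. That case cannot be excluded without the hypothesis on $\zeta'$ --- Type D sequences really can converge to such points, and the lemma's content is that the hypothesis then fails --- yet your main branch never invokes that hypothesis, which is a structural red flag. The paper closes exactly this case: assuming $\zeta\notin\partial X_v$, one gets $\nu\notin\partial L_1$ (and $\zeta$ not a pole); then Lemma \ref{lemma:signposts-QI}(2) says the equivariant quasi-isometry $f_w$ carries closest-point pairs to closest-point pairs up to bounded error, so in $X'$ the corresponding points $q_n'$ converge to $\nu'=\partial f_w(\nu)\notin\partial\ELLprime{w,e}$; hence $\zeta'$ lies in the closed longitude over $\nu'$, forcing $\zeta'$ to be either outside $\partial X_v'$ or a pole of $\partial X_w'$, contradicting the hypothesis either way. (Also, in your pole sub-case there is no contradiction to be had: poles of $\partial X_w$ lie in $\partial X_e\subset\partial X_v$, so that branch simply yields the desired conclusion directly.)
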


Note that by the symmetry of the setup, this lemma still holds with the primes reversed.

\begin{proof}
First of all, $(g_n)$ cannot have Type E by Lemma \ref{lemma:the two overlapping edges case}.
To get a contradiction, we will assume also that $\zeta\notin\partial X_v$.  Then $(g_n)$
cannot have Type A or B either by Lemma \ref{lemma:if point not in block boundary then vertex separates}.

Suppose the sequence has Type D.  Let $e$ be the edge in the link of $v$ shared
by all $[v,g_nv]$, $w$ be its second endpoint, and
$e_n$ denote the second edge in $[v,g_nv]$.
Denote $L=\ELL{w,e}$ and $L_n=\ELL{w,e_n}$, and let $\overline{q_n}\in L_n$
be a closest point to $L$.
Denote by $\gamma_n$ the geodesic $[x_v,g_nx_v]$.
Lemma~\ref{lemma:signposts}
guarantees that $\gamma_n$ passes near $\overline{q_n}\times\R$.
If the set $\{\overline{q_n}\}$ is
bounded, then by local finiteness of $\mathcal{L}_w$, the
lines $L_n$ cannot be all distinct.  So $\{\overline{q_n}\}$ is
unbounded, and by passing to a subsequence, we may assume that
$\overline{q_n}\to\nu\in\partial\overline{Y_w}$.
Note that $\nu\notin\partial L$, since otherwise we would
have $\zeta\in\partial X_e\subset\partial X_v$.

Now let $\gamma_n'$ denote the geodesic $[x_v',g_nx_v']$.
Get analogous lines $L'$ and $L_n'$ in $\overline{Y'_w}$ and $\overline{q_n}'\in L_n'$.
Again, each $\gamma_n$ passes near $\{\overline{q_n}'\}\times\R$
where $\overline{q_n}'\in L_n'$ is a closest point to $L'$.  By Lemma~\ref{lemma:signposts-QI},
the sequence $\{\overline{q_n}'\}$ is unbounded and converges to $\nu'=\partial f_w(\nu)$.
Since $L$ and $L'$ are axes of the same group element and $f_w$ is equivariant,
$\nu\notin\partial L'$.  So $\zeta'\notin\partial X'_v$ for a contradiction.

Finally, suppose the sequence has Type C.
Let $\gamma$ and $\gamma'$ denote the geodesic rays in $X$ and $X'$ based at
$x_v$ and $x_v'$ going out to $\zeta$ and $\zeta'$ respectively and $L=\ELL{w,e}$.  Since
$\zeta'$ is not a pole of $\partial X'_w$, it is contained in a longitude, say determined
by the point $\overline{\zeta}\in\partial\overline{Y_w}$.

Let $w=g_1v$ and define $a_n=g_ng_1^{-1}\subset G_w$
and $b_n=a_nZ(G_w)\subset H_w$.
Then $\zeta$ is also the limit of the sequence $a_nx_w$
and $\overline{\zeta}$ is the limit of the sequence
$b_ny_w$.  The hypothesis that
$\zeta\notin\partial X_v$ means that $\overline{\zeta}\notin\partial L$.
As in the previous case, strong rigidity of $H_w$ guarantees that
in $\overline{Y_w'}$, the sequence $(b_n)$ cannot converge to either boundary
point of $\ELLprime{w,e}$.  In $X_w'$, this means that the sequence $(a_n)$
does not converge to a point of $\partial X_e$ other than a pole of $\partial X_w'$.
That is, either $\zeta'$ is a pole of $\partial X_w'$ or $\zeta'\notin\partial X_v'$
which is a contradiction.
\end{proof}

\begin{proof}[Proof of Proposition \ref{prop:f-fibers of non-poles}]
Let $(g_n)\subset G$ be a sequence such that $g_nx_v\to\zeta\in\partial X_v$ (not a pole) and $g_nx_v'\to\zeta'\in\partial X'$.
By Lemma \ref{le:same block}, we know that $\zeta'\in\partial X_v'$.
The sequence cannot be of Type E by Lemma \ref{lemma:the two overlapping edges case}.
If the sequence has Type A, then write $h_n=g_nZ(G_v)\in H_v$.  Since $h_ny_v$ converges to a point of
a longitude $l$ of $\partial X_v$, Remark \ref{rem:strong rigidity} guarantees that $h_ny_v'$ will converge to a
point of the corresponding longitude $l'$ of $\partial X_v'$ and so $\zeta'$ must lie in the closure of $l'$.

If the sequence has Type B, then denote by $e_n$
the first edge in the geodesic edge path $[v,g_nv]$, $L_n=\ELL{v,e_n}$, and $L_n'=\ELLprime{v,e_n}$.  Choose
$\overline{p_n}\in L_n$ to be the point closest to $y_v$ and $\overline{p_n}'\in L_n'$ to be the point closest to $y_v'$.
Since the $\{e_n\}$ are all distinct, the sequences $(\overline{p_n})$ and $(\overline{p_n}')$ remain unbounded and
converge to points $\nu\in\partial\overline{Y_v}$ and $\nu'\in\partial\overline{Y'_v}$.
The geodesics $[x_v,g_nx_v]$ and $[x_v',g_nx_v']$ pass near the lines $\{\overline{p_n}\}\times\R$
$\{\overline{p_n}'\}\times\R$ by Lemma~\ref{lemma:point-to-line}.  Finally, $f_v(\overline{p_n})$ is close to $\overline{p_n}'$
by Lemma~\ref{lemma:signposts-QI}.  Thus Remark \ref{rem:strong rigidity} guarantees that
$\nu'$ lies in the longitude of $\partial X'_v$ corresponding to the longitude containing $\zeta$
and $\zeta'$ lies in its closure.

Finally suppose the sequence has Type C or D.  Let $e$ be the common edge and $w$ be the other endpoint of $e$.
Denote also $L=\ELL{w,e}$.  If $(g_n)$ has Type C, choose for all $n$ a point $\overline{p_n}\in L$ closest to {\color{red}$g_ng_1^{-1}Z(G_w)y_w$}.
If $(g_n)$ has Type D, then let $e_n$ denote the second edge of $[v,g_nv]$ and
choose $\overline{p_n}\in L$ to be a closest point to $\ELL{v,e_n}$.  Either way,
$[x_v,g_nx_v]$ passes near the line $\{\overline{p_n}\}\times\R$ in $Y_v$
(Lemma~\ref{lemma:point-to-line} or \ref{lemma:signposts}).
If $(\overline{p_n})$ remains bounded, then $g_nx_v$ converges to a point of $\partial X_w\setminus\partial X_e$,
which is a contradiction, since we assumed that $\zeta\in\partial X_v$.  So $(\overline{p_n})$ converges to an
endpoint $\nu$ of $L$.  Let $l_w$ denote the longitude of $\partial X_w$ containing $\nu$.
Remark \ref{rem:strong rigidity} guarantees that $(f_w(\overline{p_n}))$ converges to the endpoint of $L'(v,e_n)$
corresponding to $\nu$.  Since the geodesics $[x_v',g_nx_v']$ all pass near the lines
$\{f_w(\overline{p_n})\}\times\R$ (lines in $Y_w$), it follows that $\zeta'$ is in the closure of the longitude $l_w'$
of $\partial X_w'$.
\end{proof}

\subsection{F-Fibers of Poles}
\label{subsec:poles}
Here we deal with the pole case.  We will show

\begin{prop}
\label{prop:f-fibers of poles}
Let $\zeta\in\partial X_v$ be a pole.  Then $\Lambda(\zeta)$ is contractible.
To be precise, $\Lambda(\zeta)$ contains exactly one pole $\zeta'$ of $\partial X_v'$, and
whenever $\xi'\in\Lambda(\zeta)$ is another point and $\eta'$ is between $\zeta'$ and $\xi'$
(on the longitude of $\partial X'_v$ containing $\xi'$), then $\eta'\in\Lambda(\zeta)$.
\end{prop}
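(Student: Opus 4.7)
The plan is to establish three assertions that together prove the proposition: (i) $\zeta' \in \Lambda(\zeta)$; (ii) the opposite pole $\nu$ of $\partial X'_v$ (corresponding to $c_v^{-\infty}$) is not in $\Lambda(\zeta)$; and (iii) the cone structure of $\Lambda(\zeta)$ at $\zeta'$. Statement (i) is immediate from $g_n = c_v^n$: this sends $x_v \mapsto c_v^n x_v \to \zeta$ in $\overline X$ and $x'_v \mapsto c_v^n x'_v \to \zeta'$ in $\overline{X'}$, placing $(\zeta,\zeta')$ in $\overline{\Lambda}$.

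For (ii), I would suppose $(g_n) \subset G$ realizes $g_n x_v \to \zeta$ and $g_n x'_v \to \nu$, and rule out each sequence type A--E from Section \ref{subsec:CKA-boundaries}. Type E is incompatible with $\zeta$ being a pole of $\partial X_v$: Lemma \ref{lemma:the two overlapping edges case} would force $\zeta$ to be a pole of $\partial X_w$ for some $w$ in the link of $v$, but since $c_v$ and $c_w$ are independent generators of $G_e \cong \Z^2$ by property (4) of CKA groups, $c_v^\infty \neq c_w^{\pm\infty}$ in $\partial Y_e$. Types B and D are ruled out by the arguments of Section \ref{subsec:non-poles}: they would place the $X$-limit on a longitude, contradicting $\zeta$'s being a pole. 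Type C is the subtle case: writing $g_n = a_n g_1$ with $a_n \in G_w$, the projections of $a_n g_1 x_v$ and $a_n g_1 x'_v$ to $\overline{Y_w}$ and $\overline{Y'_w}$ respectively converge to $h_1^\infty$ and $h_1^{-\infty}$ (where $h_1$ is the image of $c_v$ in $H_w$), contradicting the strong rigidity of the $\delta$-hyperbolic group $H_w$ recorded in Remark \ref{rem:strong rigidity}. Finally, Type A is handled directly by Lemma \ref{lemma:slope thingy}.

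For (iii), begin with any $(g_n) \subset G$ satisfying $g_n x_v \to \zeta$ and $g_n x'_v \to \xi'$. By the symmetric form of Lemma \ref{le:same block}, $\xi' \in \partial X'_v$. If $\xi' = \zeta'$ the cone property is vacuous; otherwise $\xi'$ lies on a longitude with a well-defined finite slope $\tau'$ in the splitting $Y'_v = \overline{Y'_v} \times \R$, and the desired $\eta'$ has some slope $\tau'' \in (\tau', \infty]$. Define $h_n = c_v^{m_n} g_n$ for a sequence of positive integers $m_n$ to be calibrated. Since $c_v^{m_n}$ acts on $Y'_v$ as the identity on $\overline{Y'_v}$ and as translation by $m_n T'_v$ in the $\R$-factor, it preserves the $\overline{Y'_v}$-coordinate (hence the longitude of the limit point) while translating the $\R$-coordinate. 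Selecting $m_n$ so that $m_n T'_v$ is asymptotic to $(\tau''-\tau')$ times the $\overline{Y'_v}$-distance from $y'_v$ to the projection of $g_n x'_v$ forces $h_n x'_v \to \eta'$. On the $X$ side, the slope of $g_n x_v$'s projection in $Y_v$ already tends to $\infty$ (since $\zeta$ is a pole), so adding the nonnegative term $m_n T_v / d(y_v, \cdot)$ leaves the slope tending to $\infty$, giving $h_n x_v \to \zeta$.

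The main obstacle is Type C in (ii): Lemma \ref{lemma:slope thingy} does not directly apply, since $(a_n)$ lies in $G_w$ rather than $G_v$. The fix is to pass to the hyperbolic quotient $H_w$ and use its strong rigidity to expose the contradiction between the antipodal limits $h_1^\infty$ and $h_1^{-\infty}$. A secondary technical point in (iii) is justifying the coordinate computation when $(g_n)$ is not of Type A; this is handled by noting that $\xi' \in \partial X'_v$ forces $g_n x'_v$ to track the ray $[x'_v, \xi']$ in $Y'_v$ so closely (relative to its distance from $x'_v$) that its projection onto $Y'_v$ suffices for the boundary analysis.
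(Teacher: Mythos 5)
Your outline has the same skeleton as the paper's proof (the pole $\zeta'$ obtained from $c_v^n$; exclusion of the opposite pole by cases on sequence type; a cone property obtained by multiplying by powers of $c_v$), and your Types A, C, and E are treated essentially as the paper treats them. However, your dismissal of Types B and D rests on a false premise. You claim these types ``would place the $X$-limit on a longitude, contradicting $\zeta$'s being a pole.'' There is no contradiction there: a pole of $\partial X_v$ \emph{is} a longitude point of $\partial X_w$ for each adjacent $w$ (the axis of $c_v$ lies in the $2$-flat $Y_e$ and has finite, nonzero slope in the splitting $Y_w=\overline{Y_w}\times\R$), and for Type B the argument of Proposition \ref{prop:f-fibers of non-poles} only places the limit in the \emph{closure} of a longitude of $\partial X_v$, which contains both poles. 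Indeed, sequences of the form $g_n=c_v^{n^2}b_n$, with the $b_n$ exiting $v$ through pairwise distinct first edges and moving a linearly growing distance, are Type B (note $c_v$ stabilizes every edge at $v$) and converge to $c_v^{\infty}$. This is exactly why the unlabeled lemma preceding Proposition \ref{prop:f-fibers of poles} cannot dismiss Type B and instead devotes its longest argument to it: it builds the lines $R_n$ and $T_n$ over $L(v,e_n)$, produces an auxiliary sequence $(k_n)\subset G_v$ with $k_nx_v$ near the intersection points $z_n$, shows the slopes of $[x_v,z_n]$ go to $+\infty$ while the slopes of $[x_v',z_n']$ go to $-\infty$, and contradicts Lemma \ref{lemma:slope thingy}. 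Type D likewise survives your objection and is instead excluded by the same strong-rigidity argument you give for Type C, applied to the points $\overline{q_n}\in L(w,e_n)$ closest to $L(w,e)$ (via Lemmas \ref{lemma:signposts} and \ref{lemma:signposts-QI}). Your proposal contains no argument that covers Type B, and that case is the technical heart of the exclusion.

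Second, your cone-property calibration is not justified as written. You compute with ``the slope of the projection of $g_nx_v'$ to $Y_v'$,'' and your patch asserts that $g_nx_v'$ tracks the ray $[x_v',\xi')$ closely because $\xi'\in\partial X_v'$. Cone-topology convergence only makes the geodesics $[x_v',g_nx_v']$ converge uniformly on compact sets; for sequences of Types B--D the orbit points themselves can recede arbitrarily far from $X_v'$, so their projections to $Y_v'$ need not detect the limit, and the same objection applies to your verification that $h_nx_v\to\zeta$ on the $X$ side. The paper's machinery exists precisely to repair this: Lemma \ref{lemma:smallangles} bounds how much a single multiplication by $c_v$ can move the Alexandrov angle at the basepoint once the geodesic passes through a distant vertex space; Lemma \ref{lemma:techreal} then yields, for every intermediate angle, a diagonal sequence $h_k=c_v^{m_k}g_{n_k}$ realizing it; and two further checks --- that $[x_v',h_kx_v']$ still passes near the $c_v$-invariant lines $\{y_k'\}\times\R$, so the limit stays on the longitude $l'$, and that Lemma \ref{lemma:smallangles}(2) forces $h_kx_v\to\zeta$ --- complete the proof. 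Without a replacement for this angle-based control, your choice of $m_n$ cannot be shown to produce the desired limits, even though the underlying idea (sliding along the longitude with powers of $c_v$) is the same as the paper's.
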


The following technical lemma is an exercise in real analysis.

\begin{lemma}
\label{lemma:techreal}
Let $b>a$ and $\bigl\{Q(n,m)\big|n,m\;\textrm{ are nonnegative integers}\bigr\}\subset[a,b]$ such that all of the following hold:
\begin{enumerate}
\item $\lim_{n\to\infty}\;Q(n,0)=a$. \\
\item For fixed $n\ge 0$, $\lim_{m\to\infty}\;Q(n,m)=b$. \\
\item For all $\epsilon>0$, there is an $N\ge 0$ large enough so that whenever $n\ge N$, $Q(n,m+1)<Q(n,m)+\epsilon$. \\
\end{enumerate}
Then given any $q\in[a,b]$, there are increasing sequences $n_k$ and $m_k$ such that
\[
	\lim_{k\to\infty}\;Q(n_k,m_k)=q.
\]
\end{lemma}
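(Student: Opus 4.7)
The plan is to split into three cases depending on whether $q$ equals $a$, equals $b$, or lies strictly between. The case $q=b$ is immediate from (2): set $n_k=k$ and, for each $k$ in turn, choose $m_k$ larger than $m_{k-1}$ so that $Q(k,m_k)>b-1/k$; since all values lie in $[a,b]$, this forces $Q(n_k,m_k)\to b$. The case $q=a$ requires only (1) and (3): for each $k$, apply (3) with tolerance $\epsilon_k=1/(2k^2)$ to obtain an $N_k$ so that for $n\ge N_k$, telescoping yields $Q(n,k)-Q(n,0)<k\cdot \epsilon_k = 1/(2k)$; then use (1) to choose $n_k\ge\max(N_k,n_{k-1}+1)$ with $Q(n_k,0)<a+1/(2k)$. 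Setting $m_k=k$ gives $|Q(n_k,m_k)-a|<1/k$ with both sequences increasing.

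The interesting case is $q\in(a,b)$, which I would handle by a discrete intermediate value argument. For each $k$, use (3) with tolerance $1/k$ to get an index $N_k$ so that for $n\ge N_k$ all consecutive jumps $Q(n,m+1)-Q(n,m)$ are bounded by $1/k$. Use (1) to pick $n_k\ge\max(N_k,n_{k-1}+1)$ with $Q(n_k,0)<a+1/k<q$. By (2) applied to this fixed $n_k$, the values $Q(n_k,m)$ eventually exceed $q$, so there is a smallest $m_k$ with $Q(n_k,m_k)\ge q$. The minimality of $m_k$ together with the small-jump bound yields
\[
  q\;\le\; Q(n_k,m_k)\;<\; Q(n_k,m_k-1)+\tfrac{1}{k}\;<\;q+\tfrac{1}{k},
\]
so $Q(n_k,m_k)\to q$. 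To see that $m_k\to\infty$, telescope once more: $m_k/k \ge Q(n_k,m_k)-Q(n_k,0) > q - a - 1/k$, giving $m_k>k(q-a)-1$, which tends to infinity because $q>a$. Finally, pass to a subsequence to make $m_k$ monotone.

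The only real obstacle is bookkeeping: the constants from (1), (3), and the target $q$ interact, so one must choose the tolerance $1/k$ in (3) small enough relative to the closeness $Q(n_k,0)\approx a$ in (1) to guarantee both that $Q(n_k,m_k)$ lands within $1/k$ of $q$ and that the required number of jumps $m_k$ grows without bound. Nothing in the argument uses topology of the indexing pair $(n,m)$ beyond these three hypotheses, and no clever trick is needed once the parameters are chosen carefully.
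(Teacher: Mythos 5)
The paper offers no proof of this lemma to compare against---it is explicitly dismissed as ``an exercise in real analysis''---so your argument stands or falls on its own, and it stands. The three-case split is sound: for $q=b$ you only need hypothesis (2); for $q=a$ the telescoped jump bound $Q(n,k)-Q(n,0)<k\epsilon_k=1/(2k)$ combined with (1) does the job (note that (3) only bounds jumps from above, but downward jumps merely push $Q$ toward $a$, which is harmless since $Q\ge a$ throughout); and for $q\in(a,b)$ the discrete intermediate-value argument---take the least $m_k$ with $Q(n_k,m_k)\ge q$, so that minimality plus the jump bound traps $Q(n_k,m_k)$ in $[q,\,q+\tfrac1k)$---is exactly the right idea. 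Your telescoping estimate $m_k>k(q-a)-1$ correctly forces $m_k\to\infty$, after which extracting a subsequence along which $m_k$ is strictly increasing (and $n_k$ remains strictly increasing, being a subsequence of a strictly increasing sequence) finishes the proof. The only blemish is the chain $Q(n_k,0)<a+1/k<q$, whose second inequality fails for $k\le 1/(q-a)$; this is harmless---either begin the construction at some $k_0>1/(q-a)$ and re-index, or use the tolerance $\min\{1/k,(q-a)/2\}$ in (3) and (1)---and since you flag the parameter interaction yourself, it is bookkeeping rather than a gap.
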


Recall that the \textit{Alexandrov angle} between a pair of geodesics
(either segments or rays) $\alpha$ and $\beta$ emanating from a common
basepoint $x_0$ is defined as the
limit as $t\to 0$ of corresponding angles in comparison triangles
$\overline\triangle x_0\alpha(t)\beta(t)$ in Euclidean space.
If the other endpoints of $\alpha$ are $y$ and $z$, then this angle
is denoted by $\angle_x(y,z)$.
If $x$ is fixed, then this angle is
continuous in $y$ and $z$ \cite[Proposition~II.3.3(2)]{Bridson1999}
and satisfies the triangle inequality \cite[Proposition~I.1.14]{Bridson1999}.
If $\triangle \overline{x}\overline{y}\overline{z}\subset\E^2$ is a
comparison triangle for $\triangle xyz$, then
\[
	\angle_{x}(y,z)\le\angle_{\overline x}(\overline{y},\overline{z})
		\qquad\textrm{\cite[Proposition~II.3.1]{Bridson1999}}.
\]

\begin{lemma}
\label{lemma:smallangles}
Let $v$ be a vertex of $T$, $c_v$ be a generator for $Z(G_v)$, and $\epsilon>0$.
Then there exists a $D\ge 0$ such that whenever $w$ is a vertex in the link of $v$ such that
$d(y_v,\ELL{v,e})\ge D$ and $z\in X$ such that $[x_v,z]$ passes through $X_w$, then
for all $k\ge 0$, both of the following hold:
\begin{enumerate}
\item $\angle_{x_v}(c_v^{-\infty},cz) \le \angle_{x_v}(c_v^{-\infty},z)+\epsilon$ \\
\item $\angle_{x_v}(c_v^{\infty},c^kz) \le \angle_{x_v}(c_v^{\infty},z)+\epsilon$ \\
\end{enumerate}
\end{lemma}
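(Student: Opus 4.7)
The plan is to apply the triangle inequality for Alexandrov angles,
\[
\angle_{x_v}(c_v^\infty, c^k z)\;\le\;\angle_{x_v}(c_v^\infty, z)\;+\;\angle_{x_v}(z, c^k z),
\]
and similarly for~(1), thereby reducing both statements to bounding the error term $\angle_{x_v}(z,c^k z)$. The hypothesis that $[x_v,z]$ passes through $X_w$ means that the geodesic must first cross the edge flat $X_e=\ELL{v,e}\times\R$ at some entry point $z_0$ with $d(x_v,z_0)\ge D$, since $d(y_v,\ELL{v,e})\ge D$ and the $\R$-factor is orthogonal in the splitting $Y_v=\overline{Y_v}\times\R$. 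The same observation applies to $c^k z$.

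First I would reduce to the case where $z$ and $c^k z$ lie in $X_e$ itself, replacing each by the corresponding entry point; by convexity of the CAT(0) metric this does not decrease the relevant angle at $x_v$. Inside $X_e\cong\E^2$, the element $c$ lies in the abelian edge stabilizer $G_e\cong\Z^2$, commutes with $Z(G_v)$, and acts as translation along a line transverse to the $c_v$-axis.

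Next I would apply the elementary Euclidean estimate that a point at distance $\ge D$ from a 2-flat $F$ sees a segment of length $\ell$ in $F$ at angle at most $\ell/D$. For statement~(1), applied with $\ell=|c|$, this yields the bound as soon as $D\ge|c|/\epsilon$.

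The main obstacle is statement~(2), where $d(z,c^k z)$ grows linearly in $k$ so the naive displacement bound fails. The plan is to pass to the limit using continuity of the Alexandrov angle,
\[
\angle_{x_v}(c_v^\infty, c^k z)\;\longrightarrow\;\angle_{x_v}(c_v^\infty, c^\infty)\quad\text{as } k\to\infty,
\]
and then bound the limiting quantity directly. The key geometric input — and the step I expect to be the most delicate — is that the translation axis of $c$ in $X_e$ exits in the same end of $\partial X_e$ as the direction from $x_v$ to $z$, once $z$ is taken in the forward half of $X_e$; this is the source of the asymmetry between~(1) and~(2) and explains the non-negativity restriction on $k$ in~(2). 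Combined with hyperbolic narrowing in $\overline{Y_v}$ — for $D$ large, the ideal endpoints $h_e^{\pm\infty}$ of $\ELL{v,e}$ are angularly close at $y_v$, so that the whole boundary circle $\partial X_e$ is squeezed toward the pair $\{c_v^{-\infty},c_v^\infty\}$ at $x_v$ — this forces $\angle_{x_v}(c_v^\infty,c^\infty)$ within $\epsilon$ of $\angle_{x_v}(c_v^\infty,z)$ and completes the proof.
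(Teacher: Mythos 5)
Your treatment of part (1) is essentially the paper's argument and is sound in outline: both geodesics $[x_v,z]$ and $[x_v,cz]$ pass near the line $\{y_e\}\times\R$ (where $y_e\in L(v,e)$ is the point of $L(v,e)$ nearest $y_v$) at points a bounded distance apart but at distance roughly $\ge D$ from $x_v$, so a Euclidean small-angle estimate plus the triangle inequality for Alexandrov angles gives the bound. One step you gloss over: the entry point of $[x_v,cz]$ into the edge flat is \emph{not} the $c$-translate of the entry point of $[x_v,z]$, and relating the two requires a convexity/bounded-tracking argument (the paper does exactly this when it shows $[x_v,cz]$ passes near $cx_e$); this is minor. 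Note also that you have misidentified $c$: it is $c_v$ itself, the generator of $Z(G_v)$, so it translates along the $\R$-factor of $Y_v=\overline{Y_v}\times\R$, i.e.\ \emph{parallel} to the $c_v$-axes rather than transverse to them, and consequently $c^{\infty}=c_v^{\infty}$.

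The genuine gap is part (2), and it is fatal to your plan. First, the lemma requires a single $D$ that works for \emph{all} $k\ge 0$ simultaneously (this uniformity is exactly what the application to poles needs, where the relevant sequence is $c_v^{m_k}g_{n_k}$ with $m_k$ unrelated to the rest of the data). Your limiting argument gives only pointwise information: since $c^{\infty}=c_v^{\infty}$, the limit $\angle_{x_v}(c_v^{\infty},c^kz)\to 0$ holds, but its rate depends on $d(x_v,z)$, which is unbounded, so it says nothing about intermediate values of $k$ --- the very non-uniformity you correctly flagged as ``the main obstacle'' simply reappears. Second, the geometric input you propose is false: $\partial X_e$ is \emph{not} squeezed toward the poles as $D\to\infty$. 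Hyperbolic narrowing does make the two ideal endpoints of $L(v,e)$ angularly close \emph{to each other} at $y_v$, but an ideal point of $X_e$ of slope $1$, say, is seen from $x_v$ at angle roughly $\pi/4$ from $c_v^{\infty}$ no matter how large $D$ is. The paper's actual proof of (2) uses an idea your proposal has no substitute for: writing $\widehat{x_v}=c^kx_v$, $\widehat{x_e}=c^kx_e$, $\widehat{z}=c^kz$, it applies the CAT(0) angle-sum inequality $\angle_{x_v}(\widehat{x_v},\widehat{z})\le\pi-\angle_{\widehat{x_v}}(x_v,\widehat{z})$, replaces $\widehat{z}$ by $\widehat{x_e}$ at cost $\epsilon/3$ (the translated geodesic $[\widehat{x_v},\widehat{z}]$ passes near $\widehat{x_e}$ --- a $k$-independent estimate), then invokes the \emph{exact} parallelogram identity $\pi-\angle_{\widehat{x_v}}(x_v,\widehat{x_e})=\angle_{x_v}(\widehat{x_v},x_e)$ inside the flat, convex strip $[y_v,y_e]\times\R$, which $c$ translates into itself, and finally replaces $x_e$ by $z$ at cost another $\epsilon/3$. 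Every estimate is a $c^k$-translate of a $k=0$ estimate, which is what delivers uniformity in $k$; it also explains the true asymmetry between (1) and (2): the argument needs $\widehat{x_v}$ to lie on the ray toward the reference pole, which is why (2) concerns $c_v^{\infty}$ and $k\ge 0$.
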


\begin{proof}
Let $y_e\in \ELL{v,e}$ be the closest point to $y_v$.
By Lemma~\ref{lemma:point-to-line}, both geodesics $[x_v,z]$ and $[x_v,cz]$
pass near the line $K=\{y_e\}\times\R$.  By convexity of metric, they must remain close to
the flat strip $[y_v,y_e]\times\R$ before this time.
Let $x_e\in K$ be a point near $[x_v,z]$ and $\alpha$ be the geodesic ray emanating from $cx_e$ going out to
$c_v^{-\infty}$.  Now, $[x_v,cz]$ either passes near $[cx_v,cx_e]$ or it passes near $\alpha$.  If the former holds, then it 
also passes near $\rho=[cx_v,cz]$ before $\rho$ passes near $cx_e$ and remains close to $\rho$ afterwards.  Hence $[x_v,cz]$ also
passes near $cx_e$ in this case.  Either way, there is a constant $R\ge\delta$ depending only on $\delta$ such that $[x_v,cz]$ passes
within a distance of $R$ from $\alpha$.
Now let $D\ge 0$ be large enough so that whenever $\triangle pqr$ is a triangle in Euclidean space such that
$d(p,q)$ and $d(p,r)$ both exceed $D$ and $d(q,r)\le R$, then $\angle_p(q,r)<\epsilon/3$ and such that
$\arctan(\tau/D)<\epsilon/3$ where $\tau=d(x_e,cx_e)$
is the minimal translation length of $c_v$.
If $d(y_v,y_e)\ge D$, then $\angle{x_v}(x_e,cx_e)<\epsilon/3$ and the conclusion follows from the triangle
inequality for Alexandrov angles.  This proves (1).

To get (2), denote by $\widehat{x_v}$, $\widehat{x_e}$, and $\widehat{z}$ the $c^k$-translates of $x_v$,
$x_e$, and $z$ respectively and compute:
\begin{align*}
	\angle_{x_v}(\widehat{x_v},\widehat{z})
		&\le
	\pi-\angle_{\widehat{x_v}}(x_v,\widehat{z}) \\
		&\le
	\pi-\angle_{\widehat{x_v}}(x_v,\widehat{x_e})+\frac{\epsilon}{3} \\
		&=
	\angle_{x_v}(\widehat{x_v},x_e)+\frac{\epsilon}{3} \\
		&\le
	\angle_{x_v}(\widehat{x_v},z)+\frac{2\epsilon}{3}.
\end{align*}
\end{proof}

\begin{lemma}
Let $v$ be a vertex of $T$ and $c_v$ be a generator for $Z(G_v)$.
Suppose $(g_n)\subset G$ is a sequence such that $g_nx_v\to c_v^\infty$ in $\partial X$.
Then $g_nx_v'$ cannot converge to $c_v^{-\infty}$ in $\partial X'$.
\end{lemma}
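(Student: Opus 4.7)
I would argue by contradiction: suppose $g_nx_v\to c_v^\infty$ in $\partial X$ and $g_nx_v'\to c_v^{-\infty}$ in $\partial X'$, and pass to a subsequence so that $(g_nv)\subset T$ falls into one of the Types A--E introduced just before Lemma~\ref{lemma:the two overlapping edges case}.  Each type is then ruled out separately, with Types B and D being the real work.

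\textbf{Types A and E.}  If $g_n\in G_v$ (Type A) then Lemma~\ref{lemma:slope thingy} applies directly via the slope functions for the two actions of $G_v$ on $X$ and $X'$.  In Type E, Lemma~\ref{lemma:the two overlapping edges case} would force $c_v^\infty$ to be a pole of $\partial X_w$ for some $w$ in the link of $v$; however, by CKA condition~(4) the central generators $c_v$ and $c_w$ act on the edge $2$-flat $Y_e$ by translations in linearly independent directions, so the four points $c_v^{\pm\infty}, c_w^{\pm\infty}$ are distinct in $\partial Y_e\hookrightarrow\partial X$, a contradiction.

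\textbf{Type C.}  Now $g_nv=w$ is a fixed vertex adjacent to $v$, so $a_n:=g_ng_1^{-1}\in G_w$ and the orbits $a_nx_w, g_nx_v$ (resp.\ $a_nx_w', g_nx_v'$) stay at bounded distance, giving $a_nx_w\to c_v^\infty$ and $a_nx_w'\to c_v^{-\infty}$.  The point $c_v^\infty\in\partial X_w$ lies on the longitude at one endpoint of the axis $L(w,e)\subset\overline{Y_w}$, while $c_v^{-\infty}\in\partial X'_w$ lies on the longitude at the \emph{opposite} endpoint of $L'(w,e)\subset\overline{Y'_w}$.  After passing to a subsequence in a coset, write $a_n=b_nc_w^{k_n}$ in the finite-index subgroup $H_w\times Z(G_w)$ of $G_w$; then $b_ny_w$ converges to the former endpoint in $\partial\overline{Y_w}$ and, by Remark~\ref{rem:strong rigidity}, $b_ny_w'$ converges to the corresponding endpoint of $L'(w,e)$ rather than the opposite one, contradicting $a_nx_w'\to c_v^{-\infty}$.

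\textbf{Types B and D (the main obstacle).}  In Type B the first edges $e_n$ of $[v,g_nv]$ are distinct; local finiteness of $\mathcal{L}_v$ gives closest points $\overline{p_n}\in L(v,e_n)$ to $y_v$ converging to some $\nu\in\partial\overline{Y_v}$, and by strong rigidity $\overline{p_n}'\to\nu'=\partial f_v(\nu)$.  The pole hypothesis forces the height $h_n$ at which $[x_v,g_nx_v]$ crosses $\{\overline{p_n}\}\times\R$ to satisfy $h_n/d(y_v,\overline{p_n})\to+\infty$, while in $X'$ the analogous ratio tends to $-\infty$.  Because $d(y_v,L(v,e_n))\to\infty$, Lemma~\ref{lemma:smallangles} eventually applies and controls the angles under multiplication by positive powers of $c_v$; combined with the strong-rigidity identification of $\nu'$ with $\partial f_v(\nu)$, one can run a slope-style argument mirroring Lemma~\ref{lemma:slope thingy} on the approximating sequences to obtain a contradiction on signs of heights.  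Type D is analogous, carried out at the vertex $w$ using Lemma~\ref{lemma:signposts} and the closest-point pair $\overline{p_n}\in L(w,e)$, $\overline{q_n}\in L(w,e_n)$; here CKA condition~(3) is what rules out commensurability of the edge stabilizers and hence forces the relevant contradiction.  The main obstacle in both types is precisely the bridging between the horizontal convergence in $\overline{Y_v}$ or $\overline{Y_w}$ (where strong rigidity applies) and the vertical $\R$-coordinates in $Y_v$, $Y'_v$ (where the opposite-pole distinction lives); Lemma~\ref{lemma:smallangles} is what couples these two pieces of information.
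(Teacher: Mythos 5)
Your handling of Types A, C, and E is sound and essentially matches the paper's proof: Type A is exactly Lemma~\ref{lemma:slope thingy}, Type E is ruled out by Lemma~\ref{lemma:the two overlapping edges case} together with the independence of $c_v$ and $c_w$ in $G_e$ (a detail the paper leaves implicit, which you supply correctly), and Type C is the strong-rigidity argument of Remark~\ref{rem:strong rigidity}. Note, though, that the paper handles Type D like Type C --- replacing the orbit projections by the closest points $\overline{q_n}\in L(w,e_n)$ to $L(w,e)$ and again invoking strong rigidity at $w$ --- not like Type B; your appeal to CKA condition (3) there is vague and unnecessary.

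The genuine gap is Type B, which is the heart of the proof. You correctly observe that the crossing height $h_n$ of $[x_v,g_nx_v]$ on $\{\overline{p_n}\}\times\R$ satisfies $h_n/d(y_v,\overline{p_n})\to+\infty$ while the analogous ratio in $X'$ tends to $-\infty$, but these two facts concern the geodesics $[x_v,g_nx_v]$ and $[x_v',g_nx_v']$, which are not related by any equivariant map, so there is no mechanism for playing them off against each other. In particular, approximating the crossing points by orbit points fails: if $k_nx_v$ is chosen near the crossing point in $X$, nothing forces $k_nx_v'$ to be near the crossing point in $X'$ --- the freedom of the vertical coordinate to drift differently in the two spaces is precisely the Croke--Kleiner non-rigidity phenomenon --- and Lemma~\ref{lemma:smallangles} (which the paper uses only later, for Proposition~\ref{prop:f-fibers of poles}) provides no such bridge. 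What is needed, and what the paper constructs, is an equivariantly determined family of points: $z_n$, the intersection of $P_n=\{\overline{p_n}\}\times\R$ with the line $T_n\subset L(v,e_n)\times\R$ that joins the poles of $\partial X_{w_n}$, where $w_n$ is the second vertex of $[v,g_nv]$. Cocompactness bounds the slopes of the lines $T_n$; since $[x_v,g_nx_v]$ passes near $T_n$ and $z_n$ maximizes the slope from $x_v$ among points of $T_n$, the slopes of $[x_v,z_n]$ tend to $+\infty$. Because axes and closest-point pairs are carried to their counterparts by the equivariant quasi-isometry (Lemma~\ref{lemma:signposts-QI}), $z_n$ is carried close to the corresponding point $z_n'$, and the mirror argument in $X'$ makes the slopes of $[x_v',z_n']$ tend to $-\infty$. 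Choosing $k_n\in G_v$ with $k_nx_v$ near $z_n$ then produces a Type A sequence with $k_nx_v\to c_v^{\infty}$ and $k_nx_v'\to c_v^{-\infty}$, contradicting Lemma~\ref{lemma:slope thingy}. Without this construction (or some equivalent equivariant bridge between the horizontal and vertical data), your Type B argument does not close.
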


\begin{proof}
Without loss of generality, assume $(g_n)$ has one of the five types above.  Type E is impossible by
Lemma~\ref{lemma:the two overlapping edges case}.  The proof in the case of Type A follows from
Lemma~\ref{lemma:slope thingy}.

For Type C, let $a_n=g_ng_1^{-1}\subset G_w$ (this has the same limit point as $g_n$) and
$\overline{a_n}=a_nZ(G_w)$ be the projected sequence in $H_w$.  Since $\overline{a_n}y_w\to c_v^\infty\in\partial\overline{Y_w}$,
Remark \ref{rem:strong rigidity} guarantees that
$\overline{a_n}f_w(y_w)\to c_v^\infty\in\partial\overline{Y'_w}$ as well.  So in $Y'_w$,
$(a_n)$ cannot converge to $c_v^{-\infty}$ because it has to converge to a point on the longitude
of $\partial Y_w'$ containing $c_v^{\infty}$.  Type D is similar except that we replace
the sequence of points $\overline{a_n}y$ with the points $\overline{q_n}\in L(w,e_n)$ which
are closest to $L(w,e)$ (here $e$ is the first edge in $[v,g_nv]$ and $e_n$ is the second
edge).

Now assume the sequence has Type B.  Let $\gamma_n$ denote the geodesic $[x_v,g_nx_v]$.
There is a constant $C$ and geodesics $\rho_n$ in $Y_v$ such that the $\rho_n$
have Hausdorff distance $\le C$ from
$\gamma_n\cap X_v$.  If $e_n$ is the first edge in the geodesic edge path $[v,g_nv]$, then
$\rho_n$ ends near a point of the 2-flat $L(v,e_n)\times\R$.  Let $\overline{p_n}\in L(v,e_n)$ be the point
closest to $y_v$.  Then $\rho_n$ must pass near the line $P_n=\{\overline{p_n}\}\times\R$ by
Lemma~\ref{lemma:point-to-line}.

The fact that no $e_n$ shows up more than once means that the intersection of $\{\overline{p_n}\}$
with any compact set is finite ($\mathcal{L}_v$ is locally finite).  The $\rho_n$
converge to the geodesic ray emananting from $x_v$ going out to $c_v^{\infty}$.  Since this
has infinite slope, the $\rho_n$ must eventually have very large positive slope.

Since the action of $G$ on $X$ is cocompact, there is an upper bound $M$ on the slope of
a line joining a pair of poles of $\partial X_w$ where $w$ is a vertex in the link of $v$.
By passing to a subsequence (if necessary), we may assume that the slopes of $\rho_n$ exceed
$M$.

Let $w_n$ be the second vertex in the geodesic edge path $[v,g_nv]$ (the other end point of $e_n$
other than $v$).  Looking at $\overline{Y_{w_n}}$,  we can get $\overline{q_n}\in L(w_n,e_n)$
such that $\gamma_n$ passes near the line $R_n=\overline{q_n}\times\R\subset Y_{w_n}$:
If $g_nv=w_n$, then let $\overline{q_n}$ be the point in the line closest to the projection of $g_nx_v$
to the $\overline{Y_w}$ coordinate and apply Lemma~\ref{lemma:point-to-line}.
If $g_nv\neq w_n$, then let $\overline{q_n}$ be the point in the
line closest to the line $L(w_n,e_n')$ where $e_n'$ is the second edge in $[v,g_nv]$ and apply
Lemma~\ref{lemma:signposts}.

There is (another) constant $C'$ such that for every $n$, there is a line $T_n$ in the plane
$L(v,e_n)\times\R$ which lies within a Hausdorff distance of $C'$ from $R_n$.  Let $z_n=(\overline{p_n},t_n)\in Y_v$
be the point at which $T_n$ intersects $R_n$.  The slope of a geodesic segment
from $x_v$ to a point $z\in T_n$ is maximized when $z=z_n$.  Thus the slopes of the line segments
$[x_v,z_n]$ also go to infinity.  Choosing a sequence $(k_n)\subset G_v$ such that
$k_nx_v$ is close to $z_n$, we see that $k_nx_v\to c_v^{\infty}$.

Following the same procedure in $X'$, we can get lines $R_n'$ and $T_n'$ in $L'(v,e_n)\times\R$
such that $R_n'$ joins the poles of $\partial X_v'$ and $T_n'$ joins the poles of $\partial X'_{w_n}$.
Lemma~\ref{lemma:signposts-QI} says that an equivariant QI $X\to X'$ takes $R_n$ close to
$R_n'$ and $T_n$ close to $T_n'$.  Thus $z_n$ gets taken close to the intersection $z_n'$ of
$R_n'$ and $T_n'$, and $z_n'$ is close to $k_nx_v'$  But the same argument which proved that the
slopes of $[x_v,z_n]$ go to $\infty$ proves that the slopes of $[x_v',z_n']$ go to $-\infty$!
Therefore $k_nx_v'\to-\infty$, contradicting Lemma~\ref{lemma:slope thingy}.
\end{proof}

\begin{proof}[Proof of Proposition \ref{prop:f-fibers of poles}]
We will assume that $\zeta=c_v^{\infty}$ in $\partial X_v$.  Certainly the corresponding pole
$\zeta'=c_v^{\infty}\in\partial X_v'$ is also in $\Lambda(\zeta)$, since $c_v^n$ converges to $c_v^{\infty}$
in both $\partial X_v$ and $\partial X'_v$.
Let $(g_n)\subset G$ be a sequence of group elements such that $g_nx_v\to\zeta$ and
$g_nx_v'\to\xi'\in\partial X'$ where $\xi'\neq\zeta'$.
By Lemma~\ref{le:same block}, $\xi'\in\partial X_v'$,
and by the previous Lemma, $\xi'$ is not a pole of $\partial X'_v$.  So it must lie in a longitude $l'$.
Consider what Lemma \ref{lemma:techreal} says about $Q(n,m)=\angle_{x'_v}(c_v^{-\infty},c_v^mg_nx'_v)$ with
$a=\angle_{x_v}(c_v^{-\infty},\xi')$ and $b=\pi$.  Item (1) is satisfied by continuity of Alexandrov angles
with fixed basepoint,
(2) is satisfied because $c_v^mx'\to c_v^{\infty}$ regardless of which $x'\in X'$ is chosen, and
(3) is Lemma~\ref{lemma:smallangles}(1).  Therefore, for any $\theta\in[a,\pi]$, there is a sequence
$h_k=c_v^{m_k}g_{n_k}$ such that $\angle_{x_v'}(c_v^{-\infty},h_kx_v')\to\theta$.

Next we verify that $h_kx_v'$ converges to a point on $l'$.  There is a sequence
$(y_k)\subset\overline{Y_v}$ converging to a point on $l'$ and such that every geodesic
$[x_v',g_{n_k}x_v']$ passes near the line $y_k\times\R$.  But these lines are axes of $c_v$,
which means that $[x_v',h_kx_v']$ also pass near them.  Therefore all limit points of
$\{h_kx_v'\}$ also lie in $l'$.  But of course, since the angles converge, there is only one
limit point.

Finally, we check that $h_kx_v\to\zeta$.  Choose preimages $y_k$ of $y_k'$ for the map $f_v$.
Again, each geodesic $[x_v,h_kx_v]$ passes near the
line $T_k=\{y_k\}\times\R$.  Since $\{y_k\}$ is unbounded, we may apply Lemma~\ref{lemma:smallangles}(2)
to show that $\angle_{x_v}(c_v^\infty,h_kx_v)\to 0$.
Since the lines $T_k$ have all their limit points in $\partial X_v$, $h_kx_v$ has no choice but to converge to $\zeta$.
\end{proof}

\subsection{F-Fibers of Irrational Points}
\label{subsec:irrational}
The summary of the previous two subsections is that F-fibers of rational points are contractible.
It remains to prove that F-fibers of irrational points are contractible, which
is an easy application of \cite{Croke2002}.

\begin{prop}
\label{prop:f-fibers of irrational points}
Let $\zeta\in IX$.  Then $\Lambda(\zeta)$ is either a point or a subset of an arc.
\end{prop}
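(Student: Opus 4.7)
The plan is to show $\Lambda(\zeta)\subset IX'$, after which the conclusion falls out of the connectedness of F-fibers together with the Croke-Kleiner structure theorem for the irrational locus. Specifically, $\Lambda(\zeta)$ is a continuous image of a schmear fiber (via the projection $\Psi\colon\Lambda\to\overline{\Lambda}$ followed by the coordinate-projection embedding of the F-fiber into $\partial X'$), so by Theorem~\ref{Thm:connected fibres} it is a connected compactum. Meanwhile, \cite[Proposition~7.3]{Croke2002} tells us that every component of $IX'$ is either a singleton or an arc. Once $\Lambda(\zeta)\subset IX'$ is known, the connected compact set $\Lambda(\zeta)$ is forced to lie inside a single component of $IX'$, and is therefore either a point or a subset of an arc, as required.

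The crux is thus the containment $\Lambda(\zeta)\subset IX'$, which I would prove by contradiction. Suppose some $\xi'\in\Lambda(\zeta)$ were rational, say $\xi'\in\partial X'_v$ for a vertex $v$ of $T$. The F-set $\overline{\Lambda}\subset\partial X\times\partial X'$ is manifestly symmetric in the two CAT(0) spaces, so $(\zeta,\xi')\in\overline{\Lambda}$ says exactly that $\zeta$ lies in the F-fiber over $\xi'$ with the roles of $X$ and $X'$ swapped. Propositions~\ref{prop:f-fibers of non-poles} and~\ref{prop:f-fibers of poles} treat the two factors symmetrically, so applied with $X$ and $X'$ interchanged they show that this swapped F-fiber lies in a closed union of longitudes of $\partial X_v$ (possibly augmented by a pole, or by longitudes of $\partial X_w$ for a vertex $w$ adjacent to $v$). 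All such points are rational, so $\zeta\in RX$, contradicting $\zeta\in IX$.

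The main place one might stumble is in justifying that the two earlier propositions can really be invoked with the primes reversed, since their statements are written in a form that distinguishes $\zeta\in\partial X$ from $\zeta'\in\partial X'$. However, a quick audit of the ingredients used in Sections~\ref{subsec:non-poles} and~\ref{subsec:poles} -- bounded tracking, the signposts lemmas, the slope lemma, strong rigidity of the vertex space boundaries, and Lemma~\ref{le:same block} itself (explicitly noted to hold with primes reversed) -- shows that every tool is applied symmetrically to $X$ and $X'$, so no hidden asymmetry is hiding in the proofs.
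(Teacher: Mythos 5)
Your proposal is correct, and its skeleton matches the paper's: reduce everything to the containment $\Lambda(\zeta)\subset IX'$, then combine connectedness of the fiber (as a continuous image of a schmear fiber, via Theorem~\ref{Thm:connected fibres}) with Croke--Kleiner's result that components of the irrational set are points and arcs. Where you genuinely diverge is in how the containment is proved. The paper does not reuse Propositions~\ref{prop:f-fibers of non-poles} and~\ref{prop:f-fibers of poles} at all; it makes one direct application of Lemma~\ref{le:same block}, whose hypothesis is already phrased on the primed point and whose conclusion on the unprimed point, so not even a prime-reversal is needed. The only care required there is the choice of vertex: if the rational point $\xi'$ is a pole of some block, one takes $v$ to be the vertex whose pole it is, so that $\xi'$ is not a pole of $\partial X'_w$ for any $w$ in the link of $v$ (poles of adjacent vertex spaces are distinct since the centers are non-commensurable); the lemma then yields $\zeta\in\partial X_v$, contradicting irrationality. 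Your route instead swaps the roles of $X$ and $X'$ and quotes the two structural propositions. That is sound, and the cleanest justification is even simpler than your ``audit of the ingredients'': the standing hypotheses of the section are symmetric in $X$ and $X'$, and the F-set, characterized as the set of pairs $(\zeta,\xi')$ admitting a sequence $(g_n)$ with $g_nx\to\zeta$ and $g_nx'\to\xi'$, is manifestly transpose-symmetric, so every result proved for $(X,X')$ applies verbatim to $(X',X)$ with no proof-checking needed. Two minor caveats, neither a gap: in the pole case you rely on the containment of the swapped fiber in $\partial X_v$, which is only implicit in the statement of Proposition~\ref{prop:f-fibers of poles} (through the parenthetical about longitudes) rather than asserted outright; and your argument is logically heavier than necessary, since both propositions are themselves proved using Lemma~\ref{le:same block} -- the very tool the paper reaches for directly. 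The paper's version buys economy and avoids the symmetry discussion; yours buys nothing extra here, but it is a valid proof.
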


\begin{proof}
Since components of $IX'$ in both the cone and Tits topologies are points and arcs
\cite[Section~7.1]{Croke2002} and $\Lambda(\zeta)$ is connected, it suffices to check
that $\Lambda(\zeta)\subset IX'$.  Suppose there is a
rational point $\xi'\in\Lambda(\zeta)$.  Then there is a vertex
$v$ of $T$ for which $\xi'\in\partial X'_v$ and is not a pole of $\partial X'_w$
for any $w$ in the link of $v$; if $\xi'$ itself is a pole of some block, then
choose $v$ so that $\xi'$ is the pole of $\partial X'_v$ itself.  Then by
Lemma~\ref{le:same block}, $\zeta\in\partial X'_v$ for a contradiction.
\end{proof}

\bibliography{Beanz}{}
\bibliographystyle{siam}

\end{document}